\DeclareMathAlphabet{\bbol}{U}{bbold}{m}{n} 
\newcommand{\freccia}{-{Stealth[scale=1.5]}} 
\newif \ifdetailshide  
    \providecommand{\details}[1]{}
    \providecommand{\details}[2]{$~$ \newline \noindent{\bf Details:} 
    #1 $~$ \hfill $\blacksquare$ \newline }
\newtheorem{thm}{Theorem}[section]
\newtheorem{prop}[thm]{Proposition}
\newtheorem{cor}[thm]{Corollary}
\theoremstyle{definition}
\newtheorem{dfn}[thm]{Definition}
\newtheorem{oss}[thm]{Remark}
\newtheorem{ass}[thm]{Assumption}
\newcommand{\defeq}{\coloneqq}
\newcommand{\NN}{\mathbb{N}}
\newcommand{\RR}{\mathbb{R}}
\renewcommand{\SS}{\mathbb{S}}
\newcommand{\per}[1]{\mathring{#1}}
\newcommand{\ZZ}{\mathbb{Z}}
\newcommand{\bs}{\boldsymbol}
\newcommand{\bi}{{\bs i}}
\newcommand{\be}{{\bs e}}
\newcommand{\bn}{{\bs n}}
\newcommand{\bv}{{\bs v}}
\newcommand{\bw}{{\bs w}}
\newcommand{\bx}{{\bs x}}
\newcommand{\bB}{{\bs B}}
\newcommand{\bE}{{\bs E}}
\newcommand{\bF}{{\bs F}}
\newcommand{\bG}{{\bs G}}
\newcommand{\bJ}{{\bs J}}
\newcommand{\bT}{{\bs T}}
\newcommand{\hT}{{\hat T}}
\newcommand{\hbT}{{\hat \bT}}
\newcommand{\hbE}{{\hat{\bs E}}}
\newcommand{\hB}{{\hat{B}}}
\newcommand{\arr}[1]{{\bs{\mathsf{#1}}}}
\renewcommand{\pmb}[1]{\bs{#1}}
\newcommand\arrsigma{{\bs{\mathsf{\sigma}}}}
\newcommand\ttc{{\mathtt{c}}}
\newcommand\tte{{\mathtt{e}}}
\newcommand\ttn{{\mathtt{n}}}
\newcommand{\cB}{\mathcal{B}}
\newcommand{\cF}{\mathcal{F}}
\newcommand{\cI}{\mathcal{I}}
\newcommand{\cO}{\mathcal{O}}
\newcommand{\mat}[1]{\bbol{#1}}
\newcommand{\tmat}[1]{\tilde{\mat{#1}}}
\def\rmd{\, \mathrm{d}} 
\DeclareMathOperator{\bgrad}{{\bf grad}}
\DeclareMathOperator{\hbgrad}{\widehat{\bf grad}}
\DeclareMathOperator{\grad}{{grad}}
\DeclareMathOperator{\curl}{{curl}}
\DeclareMathOperator{\hcurl}{\widehat{curl}}
\let\div\relax 
\DeclareMathOperator{\div}{{div}}
\DeclareMathOperator{\hdiv}{\widehat{div}}
\DeclareMathOperator{\bcurl}{{\bf curl}}
\DeclareMathOperator{\hbcurl}{\widehat{\bf curl}}
\DeclareMathOperator{\Span}{Span}
\providecommand{\abs}[1]{\lvert#1\rvert}
\providecommand{\norm}[1]{\lVert#1\rVert}
\providecommand{\sprod}[2]{\langle#1,#2\rangle}
\providecommand{\ssprod}[2]{\langle\!\langle#1,#2\rangle\!\rangle}
\def\bbb{\phantom{\big|}}
\begin{document}

\title[Broken-FEEC framework for tensor-product splines on polar domains]{A broken-FEEC framework for structure-preserving discretizations of polar domains with tensor-product splines}


\author[1]{\fnm{Yaman} \sur{Güçlü}}\email{yaman.guclu@ipp.mpg.de}
\equalcont{All three authors contributed equally to this work.}

\author[2]{\fnm{Francesco} \sur{Patrizi}}\email{francesco.patrizi@unifi.it}
\equalcont{All three authors contributed equally to this work.}

\author*[1]{\fnm{Martin} \sur{Campos Pinto}}\email{martin.campos-pinto@ipp.mpg.de}
\equalcont{All three authors contributed equally to this work.}

\affil[1]{\orgname{Max-Planck-Institut f{\"u}r Plasmaphysik}, \orgaddress{\street{Boltzmannstra{\ss}e 2}, \city{Garching bei M{\"u}nchen}, \postcode{85748}, \country{Germany}}}

\affil[2]{\orgdiv{Department of Mathematics and Computer Science ``Ulisse Dini''}, \orgname{University of Florence}, \orgaddress{\street{Viale Giovanni Battista Morgagni 67/a}, \city{Firenze}, \postcode{50134}, \country{Italy}}}


\abstract{We propose a novel projection-based approach to 
derive structure-preserving Finite Element Exterior Calculus (FEEC) 
discretizations using standard tensor-product splines 
on domains with a polar singularity. 
This approach follows the main lines of broken-FEEC schemes 
which define stable and structure-preserving 
operators in non-conforming discretizations of the de Rham sequence. 
Here, we devise a polar broken-FEEC framework that enables 
the use of standard tensor-product spline spaces
while ensuring stability and smoothness for the solutions, 
as well as the preservation of the de Rham structure:
A benefit of this approach is the ability to reuse codes that implement standard splines on smooth parametric domains, 
and efficient solvers such as Kronecker-product spline interpolation.
Our construction is based on two pillars: the first one is an explicit characterization of smooth polar spline spaces within the tensor-product splines ones, which are either discontinuous or non square-integrable as a result of the singular polar pushforward operators.
The second pillar consists of local, explicit and matrix-free conforming projection operators that map general tensor-product splines onto 
smooth polar splines, and that commute with the differential operators of the de Rham sequence.
}

\keywords{FEEC, polar domains, tensor-product splines, commuting projections}



\maketitle


\tableofcontents

\section{Introduction}\label{sec1}

In computational electromagnetism, finite element discretizations that preserve the de Rham structure play a fundamental role in developing stable and accurate numerical schemes \cite{bossavit_computational_1998,Hiptmair.2002.anum,arnold_finite_2006}. 
These discretizations also ensure the conservation of essential 
physical properties such as the Hamiltonian structure of 
Vlasov-Maxwell equations~\cite{kraus_gempic_2017,campos_pinto_variational_2022}, 
while naturally accommodating curvilinear geometries 
through the use of parametric mappings and pushforward operators
\cite{Hiptmair.2002.anum,perse_geometric_2021}.
One particularly effective class of methods employs tensor-product 
splines, which can be assembled into stable sequences
of discrete differential forms \cite{buffa_isogeometric_2011}.

When dealing with polar parametrizations, which are 
especially convenient for disk- or torus-shaped domains, 
a well-known issue arises: because of the singularity exhibited 
by the mapping determinant at the pole, 
the pushforward tensor-product splines have poor smoothness 
or integrability properties, thereby preventing their use 
in practical computations. 



This challenge has been previously 
addressed in the literature, notably by Toshniwal and co-authors
who proposed a construction of smooth {\em polar splines} in 
\cite{deepesh1}. By defining proper extraction matrices which
correspond to the coefficients of the smooth splines in the full 
(a priori singular) tensor-product basis, this construction 
results in splines which are smooth on a surface
parametrized by a spline polar mapping, and 
in \cite{deepesh2} it has been 
extended to smooth rational splines on spheres and ellipsoids.
The construction of polar splines for the different spaces of
a de Rham sequence was then proposed in \cite{deepesh3} for 
polar spline surfaces, and in \cite{patrizi_isogeometric_2025} for 
toroidal spline domains. In \cite{holderied_magneto-hydrodynamic_2022}, commuting projection operators have been proposed for axisymmetric toroidal domains with a polar spline cross section.
Overall, these works provide reduced spline spaces that allow for stable, structure-preserving approximations of de Rham sequences 
on domains with polar singularities.


In this article, we propose an alternative approach 
which allows to design structure-preserving schemes
with standard tensor-product spline spaces.
Our method
relies on discrete projection operators which map
the tensor-product splines spaces onto the smooth polar ones.
In particular, it allows to reuse standard finite element matrices 
associated with usual tensor-product B-splines, without the need to re-implement new operators in a reduced basis.

A key property of our conforming projections is to commute with the de Rham sequence: this ensures that the resulting discretization maintains the desired structural properties, and it further allows to apply commuting projections 
based on interpolation and histopolation by inverting Kronecker products of univariate collocation matrices.

Such a projection-based approach was first introduced 
in the CONGA (conforming/non-conforming Galerkin) scheme 
for Maxwell's equations
\cite{campos_pinto_gauss-compatible_2016}. 
There, it was applied to represent the 
electric fields in discontinuous function spaces in order to allow for local (block-diagonal) mass matrices and inverse Hodge operators. Later it was extended to the discontinuous discretization of the full de Rham sequence in \cite{guclu_broken_2023}.
In the resulting {\em broken-FEEC} framework,
generic Hodge operators (such as weighted $L^2$ projections) are local, as well as the dual differential operators and the dual commuting projections.

Here, we follow the same ideas to handle the singularities induced by the 
pushforward operators associated with a spline polar mapping. 
A main ingrendient of our approach is an explicit characterization 
of the smooth spline spaces in terms of their tensor-product spline 
coefficients. This characterization then allows for a simple 
description of local projection operators onto the conforming spaces, 
which are then shown to possess the desired commuting properties.


The remainder of the article is organized as follows. In Section~\ref{sec:conga} we recall the main steps of the projection-based
broken-FEEC method for structure-preserving discretizations with non-conforming finite element spaces,
and we further specify in which sense tensor-product splines are not conforming after a pushforward on a domain with a polar singularity.
In Section~\ref{sec:conforming} we then present our main results, which are 
threefold: (i) a characterization of the conforming spline de Rham subcomplexes 
corresponding to $H^1$ and $C^1$ potential spaces respectively, 
(ii) sequences of explicit conforming projection operators mapping tensor-product splines onto the $H^1$ and $C^1$ subcomplexes, and (iii) new sequences of commuting projection operators onto these subcomplexes, obtained by composing our
discrete conforming projections with geometric interpolations in the tensor-product spline spaces.
The characterization of the smooth spline subcomplexes is proven in Section~\ref{sec:charconf},
and after studying the integrability and commuting properties of the pullback and pushforward operators
in Section~\ref{sec:pbpf}, we prove the commuting properties of our conforming projections in Section~\ref{sec:commutation}, as well as those of the tensor-product geometric projections on the polar domain.
Finally we illustrate our construction with several 
numerical experiments in Section~\ref{sec:num}.
Our results confirm that isogeometric polar broken-FEEC approximations
of Poisson and Maxwell's equations using the proposed projection 
matrices yield stable and high-order accurate solutions.

\section{Principles of broken-FEEC schemes on domains with a polar singularity} 
\label{sec:conga}

\subsection{Conforming discretizations of de Rham sequences in $\RR^2$}

Structure-preserving finite element discretizations of two-dimensional domains
usually involve discrete de Rham sequences of the form
\begin{equation} \label{dR_c}
	\begin{tikzpicture}[ampersand replacement=\&, baseline] 
	\matrix (m) [matrix of math nodes,row sep=3em,column sep=3em,minimum width=2em] {
      V^{0}_h ~ \bbb
         \& ~~ V^{1}_h ~ \bbb
            \& ~~ V^{2}_h ~ \bbb
		\\
	};
	\path[-stealth]
	(m-1-1) edge node [above] {$\bgrad$} (m-1-2)
	(m-1-2) edge node [above] {$\curl$} (m-1-3)
	;
	\end{tikzpicture}
\end{equation}
where the finite-dimensional spaces $V^\ell_h$ are {\em conforming}, 
i.e., subspaces of the natural spaces $V^\ell$ involved in the 
underlying continuous problem. A common choice is to define 
the latter as the Hilbert de Rham sequence \cite{arnold_finite_2006}:
\begin{equation} \label{dR_V_1}
	\begin{tikzpicture}[ampersand replacement=\&, baseline] 
	\matrix (m) [matrix of math nodes,row sep=3em,column sep=3em,minimum width=2em] {
      V^{0} \defeq H^1(\Omega) ~ \bbb
         \& ~~ V^{1} \defeq H(\curl;\Omega) ~ \bbb
            \& ~~ V^{2} \defeq L^2(\Omega). ~ \bbb
		\\
	};
	\path[-stealth]
	(m-1-1) edge node [above] {$\bgrad$} (m-1-2)
	(m-1-2) edge node [above] {$\curl$} (m-1-3)
	;
	\end{tikzpicture}
\end{equation}

We note that in $\RR^2$ there exists another de Rham sequence, namely 
\begin{equation} \label{dR_cD} 
	\begin{tikzpicture}[ampersand replacement=\&, baseline] 
      \matrix (m) [matrix of math nodes,row sep=3em,column sep=3em,minimum width=2em] {
         \tilde V^{0} \defeq H(\bcurl;\Omega) ~ \bbb
            \& ~~ \tilde V^{1} \defeq H(\div;\Omega) ~ \bbb
               \& ~~ \tilde V^{2} \defeq L^2(\Omega), ~ \bbb
         \\
      };
      \path[-stealth]
      (m-1-1) edge node [above] {$\bcurl$} (m-1-2)
      (m-1-2) edge node [above] {$\div$} (m-1-3)
      ;
      \end{tikzpicture}
\end{equation}
which can be related to the grad-curl sequence by a standard
rotation argument: indeed, we have
\begin{equation} \label{startdiffeq}
   \bcurl \phi = R\bgrad \phi 
      \quad \text{ and } \quad 
   \div \bv = \curl R^{-1} \bv 
      \quad \text{ with} \quad 
   R = \begin{pmatrix} 0 & 1 \\ -1 & 0  \end{pmatrix}.
\end{equation}

In this article we will consider discretizations of the 
grad-curl sequence \eqref{dR_V_1}, but we note that that all our results
readily apply to to the div-curl sequence \eqref{dR_cD} through a rotation
of the vector fields.

\subsection{Projection approach for non-conforming FEEC spaces}

In practice, it may be convenient to work with 
{\em non-conforming} spaces
$$
W^{\ell}_h \not\subset V^\ell 
\quad \text{ such that } \quad
V^{\ell}_h = W^{\ell}_h \cap V^\ell
$$
such as fully discontinuous spaces (which have block-diagonal mass matrices).
The broken-FEEC approach then relies on discrete {\em conforming projections}
\begin{equation}
   \label{P_conf}
   P^\ell: W^{\ell}_h \to V^{\ell}_h
\end{equation}
which yield a new, non-conforming de Rham sequence
\begin{equation} \label{dR_nc}
	\begin{tikzpicture}[ampersand replacement=\&, baseline] 
	\matrix (m) [matrix of math nodes,row sep=3em,column sep=3em,minimum width=2em] {
      W^{0}_h ~ \bbb
         \& ~~ W^{1,}_h ~ \bbb
            \& ~~ W^{2,}_h ~ \bbb.
		\\
	};
	\path[-stealth]
	(m-1-1) edge node [above] {$\bgrad P^0$} (m-1-2)
	(m-1-2) edge node [above] {$\curl P^1$} (m-1-3)
	;
	\end{tikzpicture}
\end{equation}
This framework allows to derive stable numerical 
approximations in so-called broken-FEEC spaces for 
many problems in electromagnetism,
as described for instance in \cite{guclu_broken_2023}.
A simple example is the Poisson equation, 
\begin{equation}\label{eq:PoissonEq}
\left\{
\begin{array}{rl}
-\Delta \phi = f & \quad \text{in } \Omega \\
\phi = 0 & \quad \text{on }\partial\Omega.
\end{array}
\right.
\end{equation}
By adding homogeneous boundary conditions to the definition 
of the spaces \eqref{dR_V_1}, 
and hence to the conforming projections \eqref{P_conf},
one can approximate the solution to \eqref{eq:PoissonEq}
by a non-conforming potential $\phi_h \in W^{0}_h$ satisfying
\begin{equation}\label{eq:PoissonConga}
\sprod{\bgrad P^0 \phi_h}{\bgrad P^0 \psi_h} 
+ \alpha \sprod{(I - P^0) \phi_h}{(I - P^0) \psi_h} = \sprod{f}{P^0 \psi_h} 
\end{equation}
for all $\psi_h \in W^{0}_h$.
Similarly, the electric field solving Maxwell's time harmonic equation
\begin{equation}\label{eq:Maxwell}
   \left\{
   \begin{array}{rl}
   (-\omega^2 + \curl \curl) \bE = \bJ & \quad \text{in }{\Omega}\\
   \bn \times \bE = 0 & \quad \text{on }\partial\Omega,
   \end{array}
   \right.
\end{equation}
can be approximated by a non-conforming discrete field $\bE_h \in W^{1}_h$ solution to 
\begin{equation}\label{eq:MaxwellConga}
   -\omega^2 \sprod{P^1 \bE_h}{P^1 \bF_h}
   + \sprod{\curl P^1 \bE_h}{\curl P^1 \bF_h} 
+ \alpha \sprod{(I - P^1) \bE_h}{(I - P^1) \bF_h} = \sprod{\bJ}{P^1 \bF_h} 
\end{equation}
for all $\bF_h \in W^{1}_h$.
For both problems above, one shows that 
for any stabilization parameter $\alpha > 0$ the 
non-conforming solutions actually belong to the conforming
spaces and solve the conforming Galerkin 
problems~\cite{guclu_broken_2023}.

Non-conforming approximations to time-dependent 
problems can also be derived,
for instance Maxwell's equations 
\begin{equation}\label{eq:tMaxwell}
    \left\{
    \begin{array}{rl}
    \partial_t \bE - \curl \bB &= -\bJ 
    \\
    \partial_t \bB + \curl \bE &= 0
    \end{array}
    \right.
\end{equation}
which can be discretized in space as
\begin{equation}\label{eq:tMaxwell_h}
    \left\{
    \begin{array}{rl}
    \sprod{\partial_t \bE_h}{\bF_h} 
    - \sprod{\bB_h}{\curl P^1 \bF_h} &= -\sprod{\bJ}{P^1 \bF_h} 
    \\
    \partial_t \bB_h + \curl P^1 \bE_h &= 0
    \end{array}
    \right.
\end{equation}
for all test function $\bF_h \in W^{1}_h$.
As this discretization has no stabilization mechanism
its solution does not belong to the conforming space, however it is stable and shown to preserve a discrete version of the Gauss law
$\div \bE = \rho$, 
see \cite{campos_pinto_gauss-compatible_2016}. 

\smallskip
\begin{oss}
   \label{rem:mass_unbounded}
   Here we have implicitely assumed that the non-conforming functions 
   where square integrable, i.e., that $W^\ell_h \subset L^2(\Omega)$.
   Although this inclusion holds for non-conforming spaces that result
   from a relaxation of the continuity constraints across regular interfaces
   as studied in \cite{guclu_broken_2023},
   it is no longer be the case for the non-conforming spaces considered
   in this article, which result from a singular mapping.
   This issue will be addressed in Section~\ref{sec:regmass} below.
\end{oss}

\subsection{Domains with a polar singularity} 

In this article we consider problems posed on 
a polar domain, that is an open domain $\Omega \subset \RR^2$ 
whose closure $\overline \Omega$ is the image of the logical annulus
$\hat \Omega \defeq [0,L] \times (\RR / 2\pi \ZZ)$ by a $C^1$ surjective mapping
\begin{equation}
   \label{F_surj}
   F: \hat \Omega \ni \begin{pmatrix}
      s \\ \theta
   \end{pmatrix} \mapsto \bx \in \overline \Omega
\end{equation}
that collapses the logical edge
$\{0\} \times (\RR / 2\pi \ZZ)$ (simply denoted $\{s=0\}$ hereafter) 
to a single point $\bx_0 \in \Omega$, 
which we will call the pole. 
We further assume that $F$ induces a $C^1$ diffeomorphism 
between the {\em punctured} polar domain and its preimage, namely
\begin{equation} \label{hatOm0-Om0} 
   \overline \Omega_0 \defeq \overline \Omega \setminus \{\bx_0\}
   \qquad \text{ and } \qquad
   \hat \Omega_0 
   \defeq \hat \Omega \setminus \{s=0\}
    = F^{-1}(\overline \Omega_0).
\end{equation}

Here we will consider two types of such polar mappings: 
\begin{itemize}
    \item the ``analytical'' polar mapping
    \begin{equation} \label{F_apol}
    F: \begin{pmatrix}
        s \\ \theta 
    \end{pmatrix}
    \mapsto
    \bx_0 + 
    \begin{pmatrix}
        s  \cos \theta \\ s \sin \theta 
    \end{pmatrix}
    \end{equation}
    corresponding to a disk domain $\Omega = \Omega_{\rm disk}$,
     
    \item a spline mapping of the form
    \begin{equation} \label{F_spol}
      F: \begin{pmatrix}
         s \\ \theta 
      \end{pmatrix}
      \mapsto
      \bx_0 + \sum_{i=0}^{n_s-1}\sum_{j=0}^{n_\theta-1} \pmb{P}_{ij} B_i(s)\per{B}_j(\theta)
   \end{equation}
   with regular B splines along the periodic variable $\theta$ as described below, and regular control points close to the pole, i.e.    
   \begin{equation} \label{F_spol_P01}
         \pmb{P}_{ij} = \begin{pmatrix}
         \rho_i \cos\theta_j \\ 
         \rho_i \sin\theta_j
      \end{pmatrix}
      ~~ \text{ for } ~~
      \left\{ \begin{aligned}
         &i \in \{0, 1\}
         \\ 
         &0 \le j < n_\theta
      \end{aligned}\right.
      ~~  \text{ with } ~~ 
      \left\{ \begin{aligned}
         &0 = \rho_0 < \rho_1
         \\ 
         &\theta_j \defeq j \Delta_\theta, 
         ~ \Delta_\theta \defeq \tfrac{2\pi}{n_\theta}~.
      \end{aligned}\right.
   \end{equation}
\end{itemize}

In \eqref{F_spol}, $B_i$ and $\per{B}_j$ are B-splines of degree 
$p \ge 1$ in the respective $s$ and $\theta$ variables 
(the upper ring denoting a periodic spline).
B-splines along $s$ are defined by an open knot sequence
\begin{equation}
   s_0 = \cdots = s_p < s_{p+1} < \cdots < s_{n_s-1} < s_{n_s} = \cdots = s_{n_s + p} = L,
\end{equation}
and along $\theta$ we use the regular angles $\theta_j = j \Delta_\theta$ 
as knots (extended to all $j \in \ZZ$ by periodicity).
We note that the above assumption of $F$ being a $C^1$ diffeomorphism
between the domains \eqref{hatOm0-Om0} implies
\begin{equation} \label{detJpos}
    \det J_F(s,\theta) > 0 \qquad \text{ for all $s > 0$ and all $\theta$}.
\end{equation}

In addition, we make the following assumption on the spline resolution along $\theta$.
\begin{ass} \label{ass:ntheta}
   In the case of a spline mapping \eqref{F_spol}--\eqref{F_spol_P01},
   the number of (regular) knots $\theta_j$ 
   satisfies 
   \begin{equation} \label{ntheta}
      n_\theta = 4 n' 
      \qquad \text{ with an integer } n' \ge p \ge 1.
   \end{equation}
\end{ass}
This assumption will be useful for studying the strength of the mapping singularity in Proposition \ref{prop:singFpol} below. It will also
allow us to use standard discrete trigonometry relations that are  
reminded in the Appendix, together with some details on B-splines.   

\smallskip 
\begin{oss}    
    Using standard tools \cite{buffa_isogeometric_2011}, 
    our results directly extend to any smooth 
    deformations of the above domains, that is, to any mapping 
    of the form $G \circ F$ where $G$ is a smooth diffeomorphism
    and $F$ is one of the polar mappings above.
\end{oss}

\smallskip 
The singularity of polar mappings can be characterized as follows.

\smallskip 
\begin{dfn} \label{def:singpol}
   We say that a mapping $F$ has a first order polar singularity if the following properties hold:
   \begin{itemize}
      \item[(i)]
      its Jacobian matrix 
      is of the form 
      \begin{equation} \label{JF_prop}
         J_F(s,\theta) 
         =
         \begin{pmatrix}
         C(\theta) + \cO(s) & s(C'(\theta) +\cO(s))
         \\ 
         S(\theta) + \cO(s) &  s(S'(\theta) +\cO(s))
         \end{pmatrix},
         \quad \det J_F(s,\theta) = s(D(\theta) + \cO(s))
      \end{equation}
      where $C$ and $S$ are $C^1$ functions, $D(\theta) = (CS'-SC')(\theta) > 0$ holds for all $\theta$. Here we use the classical notation
      $\cO(s)$ to denote a generic function 
      (depending a priori on $s$ and $\theta$, and which value may change at each occurence) satisfying 
      \begin{equation}
         \label{Os}
         \abs{\cO(s)} \le c s
      \end{equation}
      for some constant $c > 0$ that may depend on the mapping parameters.
   
      \item[(ii)] the bound $s D_* \le \det J_F(s,\theta)$ 
      holds for a positive constant $D_* > 0$, and 
      the inverse Jacobian determinant is of the form
      \begin{equation}
         \label{detJ_inv}
         \det J_F^{-1}(s,\theta) = \frac{1+\cO(s)}{s D(\theta)},
      \end{equation}
      again for a generic function $\cO(s)$ satisfying \eqref{Os}.

   \end{itemize}
\end{dfn}

\medskip
\begin{prop} \label{prop:singFpol}
   Both the analytical mapping \eqref{F_apol} and the generic spline mapping \eqref{F_spol} (assuming \eqref{detJpos}) 
   have a first order polar singularity in the sense of Definition~\ref{def:singpol}.
\end{prop}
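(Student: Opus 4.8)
The plan is to treat the two mappings separately, the analytical one being immediate. For \eqref{F_apol} I would compute the Jacobian directly: one finds $C(\theta)=\cos\theta$, $S(\theta)=\sin\theta$, every $\cO(s)$ term vanishes identically, $\det J_F=s$ and $D(\theta)=CS'-SC'\equiv 1$, so both items of Definition~\ref{def:singpol} hold verbatim with $D_*=1$ and an exactly vanishing remainder in \eqref{detJ_inv}. All the work is in the spline mapping \eqref{F_spol}, which I treat as follows.

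First I would read off the behaviour at the pole from the control-point assumption \eqref{F_spol_P01}. Since $\rho_0=0$ forces $\pmb{P}_{0j}=\pmb 0$ for all $j$, the partition-of-unity property of the $\per{B}_j$ gives $F(0,\theta)=\bx_0$ and $\partial_\theta F(0,\theta)=\pmb 0$. For fixed $\theta$ the curve $s\mapsto F(s,\theta)$ is a clamped (open-knot) B-spline curve with control points $\tilde{\pmb P}_i(\theta)=\sum_j\pmb P_{ij}\per B_j(\theta)$, so its endpoint-derivative property yields $\partial_s F(0,\theta)=\tfrac{p}{s_{p+1}}\big(\tilde{\pmb P}_1(\theta)-\tilde{\pmb P}_0(\theta)\big)=\tfrac{p\rho_1}{s_{p+1}}\sum_j\binom{\cos\theta_j}{\sin\theta_j}\per B_j(\theta)$, whose two components I name $C(\theta)$ and $S(\theta)$; these are degree-$p$ periodic splines in $\theta$ (hence of class $C^{p-1}$, the derivatives $C',S'$ below being read piecewise on each knot span). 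Taylor-expanding each column of $J_F$ in $s$ about $s=0$ then gives the form \eqref{JF_prop}: the first column is $\binom{C}{S}+\cO(s)$, while $\partial_\theta F(0,\theta)=\pmb 0$ together with $\partial_s\partial_\theta F(0,\theta)=\binom{C'}{S'}$ makes the second column $s\big(\binom{C'}{S'}+\cO(s)\big)$; expanding the $2\times 2$ determinant produces $\det J_F=s\big(CS'-SC'+\cO(s)\big)=s\big(D(\theta)+\cO(s)\big)$.

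The heart of the argument — and the step I expect to be hardest — is the strict positivity $D(\theta)>0$ for all $\theta$. Note that \eqref{detJpos} only gives $D\ge 0$ (as a limit of positive quantities $\det J_F/s$), so strict positivity is precisely the assertion that the singularity is no worse than first order. I would encode it complex-analytically: with $Z\defeq C+\mathrm{i}S=\kappa_0\sum_j e^{\mathrm{i}\theta_j}\per B_j$ and $\kappa_0\defeq p\rho_1/s_{p+1}>0$ one has $D=\Im(\overline Z\,Z')$. Since the uniform periodic B-splines are exact translates, $\per B_j(\theta+\Delta_\theta)=\per B_{j-1}(\theta)$, which yields the quasi-periodicity $Z(\theta+\Delta_\theta)=e^{\mathrm{i}\Delta_\theta}Z(\theta)$ and hence $D(\theta+\Delta_\theta)=D(\theta)$; it therefore suffices to prove $D>0$ on a single knot interval. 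On such an interval $Z/\kappa_0$ is a convex combination of the $p+1$ consecutive unit points $e^{\mathrm{i}\theta_j}$, which span an arc of width $p\Delta_\theta=\tfrac{\pi p}{2n'}\le\tfrac{\pi}{2}<\pi$ by Assumption~\ref{ass:ntheta} ($p\le n'$); they thus lie in an open half-plane, so $Z\ne 0$ there and the winding does not degenerate. What remains is to evaluate $D$ explicitly on that interval and check its sign, which is where the divisibility $n_\theta=4n'$ enters: it makes the finite sums of $\cos\theta_j$ and $\sin\theta_j$ against the (finitely many) active B-splines collapse to the closed forms recalled in the Appendix. This explicit positivity estimate is the main obstacle.

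Finally, for item (ii) I would observe that $\det J_F/s$ extends continuously to the compact set $\hat\Omega=[0,L]\times(\RR/2\pi\ZZ)$, with boundary value $D(\theta)>0$ at $s=0$ and strictly positive values for $s>0$ by \eqref{detJpos}; hence it attains a positive minimum $D_*$, giving $sD_*\le\det J_F$. The inverse formula \eqref{detJ_inv} then follows by setting $g\defeq sD(\theta)\,\det J_F^{-1}-1=\frac{sD-\det J_F}{\det J_F}$: the numerator equals $-s\,\cO(s)=\cO(s^2)$ and the denominator is bounded below by $sD_*$, so $g=\cO(s)$ and $\det J_F^{-1}=\frac{1+g}{sD(\theta)}$, as required.
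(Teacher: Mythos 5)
Your treatment of the analytical mapping, of the Jacobian expansion for the spline mapping (via the clamped-curve endpoint-derivative formula, which is equivalent to the paper's use of $M_0(0)=p/s_{p+1}$ together with \eqref{BM_0}--\eqref{BM_0p}), and of item (ii) (the compactness argument for $D_*$ and the algebra leading to \eqref{detJ_inv}) all match the paper's proof in substance. The gap is exactly where you flag it: the strict positivity $D(\theta)>0$ is set up but not proved. Writing $D=\Im(\bar Z Z')=\abs{Z}^2(\arg Z)'$, the facts you do establish --- the quasi-periodicity $Z(\theta+\Delta_\theta)=e^{\mathrm{i}\Delta_\theta}Z(\theta)$ and the non-vanishing of $Z$ on a knot span (the convex-hull/half-plane argument) --- only show that $\arg Z$ is well defined and gains $2\pi$ over a full period; they do not prevent $(\arg Z)'$ from changing sign inside a knot span, so they do not yield $D>0$ pointwise. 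Since this positivity is the entire content of the claim that the singularity is exactly first order, the proof is incomplete as written.

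For comparison, the paper closes this step by an explicit computation rather than a winding argument: it expands $D/(\rho_1 M_0(0))^2=\sum_{j,k}(\sin\theta_{k-j+1}-\sin\theta_{k-j})\,\per{B}_j\per{M}_k$ using the angle-subtraction identities, restricts the sum to $k-j\in\{-p,\dots,p-1\}$ via the support property \eqref{perBM}, and then applies the mean value theorem: $\sin\theta_{k+1}-\sin\theta_k=\Delta_\theta\cos\bar\theta_k$ with $\bar\theta_k\in(-\theta_p,\theta_p)$ and $\theta_p\le\pi/2$ because $n_\theta\ge 4p$. Every surviving coefficient is therefore strictly positive, and the nonnegative products $\per{B}_j\per{M}_{j+k}$ sum to $1/\Delta_\theta$ by partition of unity, which gives $D(\theta)>0$ (and even a uniform lower bound when $n_\theta>4p$). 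Note also that the role of Assumption~\ref{ass:ntheta} at this point is only the inequality $n_\theta\ge 4p$, so that the $2p$ relevant angles fit inside a quarter circle; the divisibility $n_\theta=4n'$ and the closed-form sums \eqref{eq:trigo} are used elsewhere in the paper, not here, contrary to what your sketch anticipates.
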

\smallskip

\begin{proof}    
    The Jacobian matrix reads
    \begin{equation} \label{JF}
        J_F(s,\theta) 
        = \begin{pmatrix}
        \partial_s F_1(s,\theta) & \partial_\theta F_1(s,\theta) 
        \\ 
        \partial_s F_2(s,\theta) & \partial_\theta F_2(s,\theta)
        \end{pmatrix}.
    \end{equation}
    In the case of the analytical 
    mapping \eqref{F_apol} we thus have
    \begin{equation} \label{JF_apol}
    J_F(s,\theta) 
    =
    \begin{pmatrix}
    \cos \theta & - s \sin \theta
    \\ 
    \sin \theta &  s \cos \theta 
    \end{pmatrix},
    \qquad \det J_F(s,\theta) = s,
    \end{equation}    
    which yields \eqref{JF_prop} with $C(\theta) = \cos \theta$,
    $S(\theta) = \sin \theta$, $D(\theta) = 1$ and $\cO(s) = 0$.

    In the case of the spline mapping \eqref{F_spol}, 
    we apply the derivative formula \eqref{Bprime} from the Appendix to write
    \begin{equation} \label{pole_JF}
        \left\{
        \begin{aligned}
          \partial_s F_1(s,\theta) 
             &= \sum_{i=0}^{n_s-2} (\rho_{i+1}-\rho_i) M_i(s)
             \sum_{j=0}^{n_\theta-1} \cos\theta_j \per{B}_j(\theta) 
          \\
          \partial_\theta F_1(s,\theta)
             &= \sum_{i=0}^{n_s-1} \rho_i B_i(s)
             \Big(\sum_{j=0}^{n_\theta-1} \cos\theta_j \per{B}_j\Big)'(\theta) 
          \\
          \partial_s F_2(s,\theta) 
             &= \sum_{i=0}^{n_s-2} (\rho_{i+1}-\rho_i) M_i(s)
             \sum_{j=0}^{n_\theta-1} \sin\theta_j \per{B}_j(\theta)
          \\
          \partial_\theta F_2(s,\theta) 
             &= \sum_{i=0}^{n_s-1}\rho_i B_i(s)
             \Big(\sum_{j=0}^{n_\theta-1} \sin\theta_j \per{B}_j\Big)'(\theta).
        \end{aligned}
        \right.
    \end{equation}    
    Using the relations \eqref{BM_0}--\eqref{BM_0p} 
    (also from the Appendix)
    satisfied at the pole, and the fact that $\rho_0 = 0$, 
   we further see that
    \begin{equation*} 
            \sum_{i=0}^{n_s-2} (\rho_{i+1}-\rho_i) M_i(s)
             = \rho_1 M_0(0) + \cO(s)
          \quad \text{ and } \quad
            \sum_{i=0}^{n_s-1} \rho_i B_i(s)
             = s (\rho_1 M_0(0) + \cO(s))
    \end{equation*}
    hold with a function $\cO(s)$ satisfying \eqref{Os}. 
    This yields \eqref{JF_prop} 
    with 
    \begin{equation}
      \label{CS_spol}
      C(\theta) 
      = \rho_1 M_0(0) \sum_{j=0}^{n_\theta-1} \cos\theta_j \per{B}_j(\theta)
      \quad \text{ and } \quad
      S(\theta) 
      = \rho_1 M_0(0) \sum_{j=0}^{n_\theta-1} \sin\theta_j \per{B}_j(\theta).
   \end{equation}
   To show that $D(\theta) = (CS' - SC')(\theta)$ is positive for all $\theta$,
    we compute
    \begin{equation*}
        \begin{split}
        \frac{D(\theta)}{(\rho_1M_0(0))^2} 
        &= 
        \sum_{j,k=0}^{n_\theta-1} \Big(
            \cos \theta_j (\sin \theta_{k+1} - \sin \theta_k) 
            \per{B}_j\per{M}_k
        -  (\cos \theta_{j+1} - \cos \theta_j) \sin \theta_k 
        \per{M}_j\per{B}_k 
        \Big) (\theta)
        \\
        &=
        \sum_{j,k=0}^{n_\theta-1} \big(
            (\cos\theta_j\sin\theta_{k+1}-\sin\theta_j\cos\theta_{k+1}) 
            -(\cos\theta_j\sin\theta_k-\cos\theta_k\sin\theta_j) \big)
            \mathring{B}_j\mathring{M}_k(\theta) 
        \\ 
        &=
        \sum_{j,k=0}^{n_\theta-1} 
            (\sin\theta_{k-j+1}-\sin\theta_{k-j})
            \mathring{B}_j\mathring{M}_k(\theta)  
        \\ 
        &= \sum_{j=0}^{n_\theta-1} \sum_{k=-p}^{p-1} (\sin\theta_{k+1}-\sin\theta_k)\per{B}_j \per{M}_{j+k}(\theta)
        \end{split}
    \end{equation*}
    where in the last equality we have used \eqref{perBM} to reduce the second sum.
    By the mean value theorem, for all
    $k \in \{-p, \ldots, p-1\}$ there exists 
    $\bar{\theta}_k \in (\theta_k, \theta_{k+1}) 
    \subset (-\theta_{p}, \theta_{p})$ 
    such that
    $\sin\theta_{k+1} - \sin\theta_k 
    = \Delta\theta\cos\bar{\theta}_k $
    and from the inequality $n_\theta \ge 4p$ derived from 
    \eqref{ntheta}, we infer that 
    $\theta_{p} = \frac{2\pi p}{n_\theta} \le \frac{\pi}{2}$.
    In particular, we have 
    $\abs{\bar{\theta}_k} < \frac{\pi}{2}$ for all $k$, hence
    $$
    \sin\theta_{k+1} - \sin\theta_k = \Delta\theta\cos\bar{\theta}_k > 0
    \quad \text{ for } \quad
    k \in \{-p, \ldots, p-1\}.
    $$
    The bound
    $D(\theta) > 0$ follows from the non-negativity of the B and M basis splines.
    Observe that if $n_\theta > 4 p$, we have
    $
    \sin\theta_{k+1} - \sin\theta_k = \Delta\theta\cos\bar{\theta}_k
    > \Delta \theta\cos \theta_p,
    $ hence, using \eqref{pup} and \eqref{pup-M},
    $$
    \frac{D(\theta)}{(\rho_1 M_0(0))^2} \ge \sum_{j,k} \Delta \theta \cos \theta_p \per{B}_j(\theta) \per{M}_{j+k}(\theta)
    \ge \cos \theta_p > 0 
    $$
    which gives a uniform bound with respect to $n_\theta$
    (note that $\cos \theta_p \to 1$ as $n_\theta \to \infty$).    
    The bounds on $\det J_F$, as well as the form \eqref{detJ_inv},
    follow easily by using this lower bound on $D(\theta)$ and the 
    assumption \eqref{detJpos}.
\end{proof}

\subsection{Singularity of the tensor-product spline de Rham sequence}

On the logical domain we consider a de Rham sequence of
tensor-product splines \cite{buffa_isogeometric_2011}, of the form
\begin{equation} \label{dR_W}
  \hat W^0_h := \SS_{p,p}(\hat\Omega) \xrightarrow{ \mbox{$~ \hbgrad ~$}}
    \hat W^1_h := \begin{pmatrix} \SS_{p-1,p}(\hat\Omega) \\ \SS_{p,p-1}(\hat\Omega)\end{pmatrix} 
    \xrightarrow{ \mbox{$~ \hcurl ~$}}
      \hat W^2_h := \SS_{p-1,p-1}(\hat\Omega).
\end{equation}
Here the hatted derivatives correspond to the logical variables $(s,\theta) \in \hat\Omega$,
and 
$
\SS_{p_1, p_2}(\hat\Omega) := \SS_{p_1}([0,L]) \otimes \SS_{p_2}(\RR / 2\pi \ZZ)
$
are tensor-product spline spaces 
associated with the same breakpoints as the spline mapping 
\eqref{F_spol}.
These spaces are classically equipped with tensor-product spline bases,
\begin{equation} \label{hT_bases}
   \left\{
   \begin{aligned}
      \text{ for } \hat W^0_h: \quad
         &\hT^0_{ij}(s,\theta) = B_i(s)\per{B}_j(\theta)
      \\
      \text{ for } \hat W^1_h: \quad
         &\hbT_{ij}^s(s,\theta) = \begin{pmatrix}
         M_i(s)\per{B}_j(\theta)\\
         0
         \end{pmatrix}
         ~\text{ and }~
         \hbT_{ij}^\theta(s,\theta) = \begin{pmatrix}
         0\\
         B_i(s)\per{M}_j(\theta)
         \end{pmatrix}
      \\
      \text{ for } \hat W^2_h: \quad
         &\hT^2_{ij}(s,\theta) = M_i(s)\per{M}_j(\theta).
   \end{aligned}
   \right.
\end{equation}
involving the B and M splines defined in the Appendix.
On the ``physical'' polar domain $\Omega$, the standard 
approach~\cite{Hiptmair.2002.anum, buffa_isogeometric_2011, kreeft2011mimetic}
consists in 
pushing forward the logical spaces \eqref{dR_W},
i.e., setting
\begin{equation} \label{Wh}
W^0_h \defeq \cF^0 \hat W^0_h, \qquad W^1_h \defeq \cF^1 \hat W^1_h, \qquad W^2_h \defeq \cF^2 \hat W^2_h
\end{equation}
with pushforward operators defined as
\begin{equation} \label{pf}
  \left\{
  \begin{aligned}
  &\cF^0 : \hat \phi \mapsto \phi := \hat \phi \circ F^{-1}
  \\
  &\cF^1 : \hat \bv \mapsto \bv :=  \big(J_F^{-T} \hat \bv \big)\circ F^{-1}
  \\
  &\cF^2 : \hat f \mapsto f :=  \big(\det J_F^{-1} \hat f \big)\circ F^{-1}.
  \end{aligned}
  \right.
\end{equation}
This amounts to defining the physical spaces as the span of the pushforward tensor-product spline basis functions:
\begin{equation} \label{T_bases}
   \left\{
   \begin{aligned}
      W^0_h &= \Span \Big(%
      \big\{ T^0_{ij} \defeq \cF^0 \hT^0_{ij} : 
      0 \le i < n_s, 0 \le j  < n_\theta \big\}
      \Big)
      \\
      W^1_h &= \Span \Big(%
      \big\{ \bT_{ij}^s \defeq \cF^1 \hbT_{ij}^s : 
      0 \le i < n_s-1, 0 \le j  < n_\theta \big\}
      \\
      & \mspace{100mu}
      \cup 
      \big\{ \bT_{ij}^\theta \defeq \cF^1 \hbT_{ij}^\theta : 
      0 \le i < n_s, 0 \le j  < n_\theta \big\}
      \Big)
      \\
      W^2_h &= \Span \Big(%
      \big\{ T^2_{ij} \defeq \cF^2 \hT^2_{ij} : 
      0 \le i < n_s-1, 0 \le j  < n_\theta \big\}
      \Big).
   \end{aligned}
   \right.
\end{equation}

Since $F: \hat \Omega_0 \to \overline \Omega_0$ is a $C^1$ diffeomorphism, 
\eqref{pf} defines bijective operators between continuous functions away from the pole, that is, 
$$
\cF^\ell: C^0(\hat \Omega_0) \to C^0(\overline \Omega_0) \quad \text{ is a bijection for $\ell \in \{0, 1, 2\}$}
$$ 
with inverse operators $\cB^\ell 
: C^0(\overline \Omega_0) \to C^0(\hat \Omega_0)$ called pullbacks,
\begin{equation} \label{pb}
  \left\{
  \begin{aligned}
  &\cB^0 : \phi \mapsto \hat \phi := \phi \circ F
  \\
  &\cB^1 : \bv \mapsto \hat \bv :=  J_F^{T} (\bv \circ F)
  \\
  &\cB^2 : f \mapsto \hat f :=  \det J_F (f \circ F).
  \end{aligned}
  \right.
\end{equation}
On the full polar domain, i.e. when the pole is included,
one easily verifies that the pullbacks are actually well-defined.

\medskip
\begin{prop} \label{prop:pb_Lq}
   The pullbacks $\cB^\ell$ are continuous mappings from 
   $L^q(\Omega)$ to $L^q(\hat \Omega)$, $1 \le q \le \infty$,
   as well as from $C^0(\overline \Omega)$ to $C^0(\hat \Omega)$.
\end{prop}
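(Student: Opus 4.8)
The plan is to reduce every assertion to the change of variables induced by $F$ together with the explicit Jacobian asymptotics of Proposition~\ref{prop:singFpol}. I would treat the $C^0$ statements first, as they are immediate: since $F$ is $C^1$, hence continuous, on the compact annulus $\hat\Omega$, the composition $g\circ F$ is continuous whenever $g\in C^0(\overline\Omega)$, and the factors multiplying it in \eqref{pb} --- the identity for $\cB^0$, the matrix $J_F^T$ for $\cB^1$, the scalar $\det J_F$ for $\cB^2$ --- are continuous and uniformly bounded on $\hat\Omega$ by Definition~\ref{def:singpol} (in particular $\det J_F = s(D(\theta)+\cO(s))$ is bounded since $s\le L$). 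This gives $\norm{\cB^\ell g}_{C^0(\hat\Omega)}\le c_\ell\norm{g}_{C^0(\overline\Omega)}$ with $c_0=1$, $c_1=\norm{J_F}_\infty$, $c_2=\norm{\det J_F}_\infty$, and in passing settles the endpoint $q=\infty$ of the $L^q$ claims.

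For finite $q$ I would use that $F$ restricts to a $C^1$ diffeomorphism between the punctured domains $\hat\Omega_0$ and $\overline\Omega_0$ of \eqref{hatOm0-Om0}, the removed pole and edge $\{s=0\}$ being Lebesgue-null, so that the substitution $\bx=F(s,\theta)$ with $\rmd\bx=\det J_F\,\rmd s\,\rmd\theta$ (licit by \eqref{detJpos}) is valid on the whole domains. Writing $\cB^\ell g=\mu_\ell\,(g\circ F)$ with $\mu_0=1$, $\mu_1=J_F^T$ and $\mu_2=\det J_F$, the substitution turns the logical integral into a physical one carrying the weight $\abs{\mu_\ell}^q/\det J_F$. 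For $\cB^2$ this is clean, since the $\det J_F$ factor absorbs the singularity and leaves a bounded weight:
\begin{equation*}
   \norm{\cB^2 f}_{L^q(\hat\Omega)}^q
   = \int_\Omega (\det J_F\circ F^{-1})^{q-1}\,\abs{f}^q\,\rmd\bx
   \le \norm{\det J_F}_\infty^{q-1}\,\norm{f}_{L^q(\Omega)}^q ,
\end{equation*}
the vanishing of $\det J_F$ at the pole being harmless here --- it only improves the estimate.

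The genuine obstacle is the pair $\cB^0,\cB^1$, for which the same substitution places the inverse determinant in the weight: for example
\begin{equation*}
   \norm{\cB^0\phi}_{L^q(\hat\Omega)}^q
   = \int_\Omega \frac{\abs{\phi(\bx)}^q}{\det J_F(F^{-1}(\bx))}\,\rmd\bx ,
\end{equation*}
and by \eqref{detJ_inv} this weight grows like $1/s$ towards the collapsed edge, so --- unlike for $\cB^2$ --- the mere boundedness of the scaling factors no longer suffices. The crux is then to read the substitution in the direction that keeps the Jacobian where it degenerates, namely $\norm{\phi}_{L^q(\Omega)}^q=\int_{\hat\Omega}\abs{\cB^0\phi}^q\,\det J_F\,\rmd s\,\rmd\theta$, which shows that $\cB^\ell$ intertwines the physical measure $\rmd\bx$ with the Jacobian-weighted logical measure $\det J_F\,\rmd s\,\rmd\theta$; against the latter $\cB^0$ is an isometry and $\cB^1$ is bounded through $\abs{J_F^T\bw}\le\norm{J_F}_\infty\abs{\bw}$. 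The step I expect to demand the most care is precisely this handling of the pole: justifying the substitution up to the null edge $\{s=0\}$, and using the two-sided control $sD_*\le\det J_F\le\norm{\det J_F}_\infty$ of Definition~\ref{def:singpol}(ii) to keep all estimates uniform in $\theta$ across the degeneracy.
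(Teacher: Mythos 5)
Your handling of the $C^0$ and $L^\infty$ cases and of $\cB^2$ is correct, and it is essentially all that the paper's one-line proof (``use the bounds \eqref{JF_prop} on $J_F$'') can be unpacked into: continuity and boundedness of $J_F$ and $\det J_F$ on the compact $\hat\Omega$, plus, for $\cB^2$, the change of variables that leaves the harmless weight $(\det J_F)^{q-1}$ with $q-1\ge 0$.

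The issue is your last step. For $\cB^0$ and $\cB^1$ with $q<\infty$ you correctly compute that the substitution produces the weight $1/\det J_F\sim 1/(sD(\theta))$ and that boundedness of the scaling factors no longer suffices --- but you then quietly change the target space: what you actually prove is boundedness of $\cB^0,\cB^1$ from $L^q(\Omega)$ into the \emph{weighted} space $L^q(\hat\Omega,\det J_F\,\rmd s\,\rmd\theta)$, not into $L^q(\hat\Omega)$ as the proposition asserts. These spaces differ near the collapsed edge, and the unweighted claim is in fact false: for the analytical mapping and any $\alpha$ with $1/q<\alpha<2/q$, the function $\phi(\bx)=\abs{\bx-\bx_0}^{-\alpha}$ belongs to $L^q(\Omega)$ while $\cB^0\phi(s,\theta)=s^{-\alpha}\notin L^q(\hat\Omega)$; similarly an indicator of a small disk around $\bx_0$ defeats any uniform bound for $q=1$. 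So your computation is sound, but it \emph{disproves} the statement for $\ell\in\{0,1\}$ and $q<\infty$ rather than proving it; you should say so explicitly and record the corrected statement (unweighted $L^q$ continuity only for $\cB^2$ and for $q=\infty$, the weighted estimate for $\cB^0,\cB^1$) instead of presenting the weighted bound as if it closed the original claim. The paper's own proof does not engage with this point at all. Note that your weighted estimate is the one actually needed downstream, e.g. in the density argument of Proposition~\ref{prop:com_pb}: convergence in $L^2(\hat\Omega,\det J_F\,\rmd s\,\rmd\theta)$ still identifies the limit a.e.\ on $\hat\Omega_0$, hence on $\hat\Omega$, so the rest of the paper survives the correction.
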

\smallskip

\begin{proof}
   Simply use the bounds \eqref{JF_prop} on the mapping Jacobian $J_F$.
\end{proof}

A key property of regular pullbacks and pushforward is the commutation with the 
differential operators from de Rham sequence \eqref{dR_V_1}, 
see e.g. \cite{Hiptmair.2002.anum}. 
{\em Away from the pole}, this commutation also holds here 
for general smooth functions: 
for any $\hat \phi$ and $\hat \bv$ in $C^1(\hat \Omega)$, the equalities 
\begin{equation} \label{com_pf}
   \bgrad \cF^0 \hat \phi = \cF^1 \hbgrad \hat \phi 
   \quad \text{ and } \quad
   \curl \cF^1 \hat \bv = \cF^2 \hcurl \hat \bv 
   \quad \text{ hold on } \Omega_0,
\end{equation}
where we remind that $\Omega_0$ is the punctured domain 
\eqref{hatOm0-Om0}, and that the hatted derivatives correspond to the 
logical variables $(s,\theta) \in \hat \Omega$.
We point out that here, the logical functions $\hat \phi$ and $\hat \bv$ 
are assumed to be smooth up to $s = 0$, however  
\eqref{com_pf} does not a priori hold on the full polar domain $\Omega$
because 
of the singularity of the pushforward operators:
First, one may have that $\cF^0 \hat \phi \notin H^1(\Omega)$ or
even that $\cF^1 \hat \bv \notin L^2(\Omega)$ for smooth functions on $\hat \Omega$.
Second, the relations \eqref{com_pf} (and in particular the second one)
do not hold for all smooth functions on $\hat \Omega$.
The latter issue will be analyzed in 
Proposition~\ref{prop:cF_diff}.
The former one can be stated as follows.

\medskip
\begin{prop} \label{prop:Wnc}
    The spaces \eqref{Wh} are not a conforming discretization of 
    \eqref{dR_V_1}, in the sense that 
    $$
    W^0_h \not\subset H^1(\Omega), 
    \qquad W^1_h \not\subset L^2(\Omega),
    \qquad W^2_h \not\subset L^2(\Omega).
    $$
\end{prop}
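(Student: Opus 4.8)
The plan is to reduce each of the three non-inclusions to the divergence of a weighted integral over $\hat\Omega$, caused by a single pushforward basis function whose modulus blows up like $s^{-1}$ at the pole. Since $\det J_F>0$ on $\hat\Omega_0$ by \eqref{detJpos}, the change of variables $\bx=F(s,\theta)$ gives, for any $g$ defined on $\Omega_0$,
\[
   \|g\|_{L^2(\Omega)}^2 = \int_{\hat\Omega} |g\circ F|^2\,\det J_F\,\rmd s\,\rmd\theta.
\]
I would then invoke the singularity structure \eqref{JF_prop}--\eqref{detJ_inv} from Proposition~\ref{prop:singFpol}, together with the observation that $D=CS'-SC'>0$ forbids $(C,S)=(0,0)$ and hence forces $C^2+S^2$ to be bounded below by a positive constant on the (compact, periodic) $\theta$-axis.

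The inclusion $W^2_h\not\subset L^2(\Omega)$ is the most transparent. Taking $\hat f=\hT^2_{0j}=M_0(s)\per{M}_j(\theta)$, the identity $f\circ F=\det J_F^{-1}\hat f$ and \eqref{detJ_inv} turn the display above into
\[
   \|\cF^2\hat f\|_{L^2(\Omega)}^2 = \int_{\hat\Omega} |\hat f|^2\,\det J_F^{-1}\,\rmd s\,\rmd\theta = \int_{\hat\Omega} \frac{M_0(s)^2\,\per{M}_j(\theta)^2\,(1+\cO(s))}{s\,D(\theta)}\,\rmd s\,\rmd\theta.
\]
Since $M_0(0)>0$ (as already used in \eqref{CS_spol}) and $\per{M}_j$ is a nonnegative bump that does not vanish identically, the integrand is $\ge c/s$ on a product of a short radial interval $(0,\delta)$ and a $\theta$-set of positive measure, so the radial integral diverges and $\cF^2\hat f\notin L^2(\Omega)$.

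For $W^0_h\not\subset H^1(\Omega)$ I would pick $\hat\phi=\hT^0_{0j}=B_0(s)\per{B}_j(\theta)$ — recalling $B_0(0)=1$ and choosing $\per{B}_j$ nonconstant so that $\partial_\theta\hat\phi(0,\theta)=\per{B}_j'(\theta)$ is not identically zero — and use the chain rule on $\Omega_0$, i.e.\ \eqref{com_pf}, to write $\bgrad\cF^0\hat\phi=\cF^1\hbgrad\hat\phi$ (the computation is unambiguous near the pole, where $\hat\phi$ is polynomial in $s$). Inserting \eqref{JF_prop} into $J_F^{-T}\hbgrad\hat\phi$, the entries acting on $\partial_\theta\hat\phi$ carry no compensating factor of $s$, giving a leading term $\tfrac{1}{sD}(-S,C)^\top\partial_\theta\hat\phi$; its squared modulus times the weight $\det J_F\sim sD$ behaves like $(C^2+S^2)\per{B}_j'(\theta)^2/(sD(\theta))$, again $\ge c/s$ on a set of positive measure, so $\|\bgrad\cF^0\hat\phi\|_{L^2(\Omega)}=\infty$. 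The case $W^1_h\not\subset L^2(\Omega)$ is identical with $\hat\bv=\hbT^\theta_{0j}=(0,B_0(s)\per{M}_j(\theta))^\top$: from $\bv\circ F=J_F^{-T}\hat\bv$ its leading behaviour is $\tfrac{1}{sD}(-S,C)^\top\per{M}_j(1+\cO(s))$, producing a divergent $\int \per{M}_j^2/(sD)\,\rmd s\,\rmd\theta$. One checks in passing that the companion fields $\hbT^s_{ij}$ push forward to bounded vectors, the factor $s$ in the $\partial_\theta F$ entries of $J_F^{-T}$ cancelling the $1/s$, so the obstruction is specifically the $i=0$ angular part.

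The one genuinely delicate point is to certify that the leading $s^{-1}$ singularity survives, i.e.\ is not cancelled by the $\cO(s)$ corrections in \eqref{JF_prop}--\eqref{detJ_inv} nor by sign changes of the angular factor. I would handle this by restricting the integration to $s\in(0,\delta)$ with $\delta$ small enough that each $\cO(s)$ term is dominated by its leading counterpart, and to a subinterval of $\theta$ on which the relevant angular factor ($\per{B}_j'$ or $\per{M}_j$) together with $D(\theta)^{-1}(C^2+S^2)$ stays bounded away from zero; on such a product set the integrand is bounded below by $c/s$ with $c>0$, and $\int_0^\delta \rmd s/s=+\infty$ closes each of the three cases.
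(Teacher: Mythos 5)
Your proposal is correct and follows essentially the same route as the paper's proof: it uses the same witness functions ($\hT^0_{0j}$, $\hbT^\theta_{0j}$ and $\hT^2_{0j}$), extracts the $s^{-1}$ leading behaviour from the first-order polar singularity of $J_F^{-T}$ and $\det J_F^{-1}$, and shows the weighted integral over $\hat\Omega$ diverges. The only cosmetic difference is that the paper isolates the singular part as $\bs{G}(\theta)/s$ plus a bounded remainder and integrates that, whereas you bound the integrand below by $c/s$ on a product set; both are valid.
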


\begin{proof}
    The functions in $W^0_h$ are bounded on the bounded domain $\Omega$ so they are $L^2(\Omega)$,
    however their gradient may not be. To verify this, consider 
    a spline 
    that does not vanish on the pole, such as
    $\phi = \cF^0 \hT_{0j}^0$ with an arbitrary $j$, that is
    $\phi(\bx) = B_0(s) \per{B}_j(\theta)$ with $\bx = F(s,\theta)$,
    and use the commutation relation \eqref{com_pf}
    for $\bx \in \Omega_0$, i.e. $s > 0$: with the derivative formulas 
    \eqref{Bprime} and \eqref{perBprime} from the Appendix, this gives
    $$
    \bgrad \phi(\bx) = (\cF^1 \hbgrad \hT_{0j}^0)(\bx) 
        = J_F^{-T} (s, \theta)\begin{pmatrix}
        -M_0(s)\per{B}_j(\theta) \\
        B_0(s)(\per{M}_{j-1}(\theta)-\per{M}_j(\theta))
    \end{pmatrix}
    $$
    with a matrix of the form
    \begin{align*}
      J_F^{-T}(s, \theta) 
      &= \frac{1}{\det J_F}\begin{pmatrix}
         \partial_2 F_2 & -\partial_1 F_2 \\
         -\partial_2 F_1 & \partial_1 F_1
     \end{pmatrix}(s, \theta)
     \\
     &= \frac{1}{s (D(\theta)+\cO(s))}\begin{pmatrix}
         s(S'(\theta) +\cO(s)) & -S(\theta) + \cO(s) \\
         -s(C'(\theta) +\cO(s)) & C(\theta) + \cO(s)
     \end{pmatrix}
   \end{align*}
    where the second equality uses \eqref{JF_prop}. 
    Using Equation~\eqref{detJ_inv} 
    and $B_0(s) = 1 + \cO(s)$,
    this implies that
    $$
    \bgrad \phi(\bx) = \frac{ {\bs G}(\theta) }{s}
    + \bs{g}(\bx)
    \quad \text{ with } \quad
    {\bs G}(\theta) \defeq
    \frac{1}{D(\theta)} \begin{pmatrix}
        - S(\theta)(\per{M}_{j-1}(\theta)-\per{M}_j(\theta))
        \\
        C(\theta)(\per{M}_{j-1}(\theta)-\per{M}_j(\theta))
    \end{pmatrix}
    $$
    and some bounded $\bs{g} \in L^\infty(\Omega)$. 
    This easily shows that $\bgrad \phi \notin L^2(\Omega)$, indeed
    \begin{multline}
      \int_\Omega \abs{\bgrad \phi(\bx) - \bs{g}(\bx)}^2 \rmd \bx 
      = \int_{\hat \Omega} \frac{ \abs{{\bs G}(\theta)}^2 }{s^2} \det J_F(s,\theta)\rmd s \rmd \theta
      \\
      = \int_{\hat \Omega} \frac{ \abs{{\bs G}(\theta)}^2 }{s} (D(\theta) + \cO(s))\rmd s \rmd \theta
      = +\infty.  
    \end{multline}
    The same argument (replacing $\bgrad \phi$
    by $\cF^1 \pmb{T}_{0j}^\theta$) 
    shows that $W^1_h \not\subset L^2(\Omega)$.
    Finally the non-inclusion for the space $W^2_h$ 
    is verified by considering $f = \cF^2 \hT_{0j}^2$,
    which, using again Equation~\eqref{detJ_inv}, 
    writes as
    $$
    f(\bx) 
        = \det J^{-1}_F \hT_{0j}^2(s,\theta) 
        = \frac {M_0(s)\per{M}_j(\theta)}{\det J_F}
        = \frac {G(\theta)}{s} + g(\bx)
    $$
    with $G(\theta) = \frac{M_0(0)\per{M}_j(\theta)}{D(\theta)}$ and 
    a bounded $g \in L^\infty(\Omega)$. One clearly 
    has $f \notin L^2(\Omega)$.
\end{proof}

\section{Construction of a polar broken-FEEC framework with tensor-product splines} 
\label{sec:conforming}

\subsection{Work plan}
As a consequence of Proposition~\ref{prop:Wnc}, we see that 
the tensor-product spaces $W^\ell_h$ are not suitable for 
finite element methods.
They are suitable, however, for applying the projection-based 
broken-FEEC approach described in Section~\ref{sec:conga}. 
To do so we need to specify conforming spline spaces, 
and propose discrete projection operators 
that map arbitrary tensor-product splines onto the conforming subspaces.
In addition, we will require that our conforming projections 
are {\em local} (only coefficients close to the pole should be modified) 
and {\em commuting}: this will be detailed just below.

In this article we will consider two notions of conformity:
the first one corresponds to the simplest (maximal) Hilbert sequence \eqref{dR_V_1}, 
that is
\begin{equation} \label{dR_V}
	\begin{tikzpicture}[ampersand replacement=\&, baseline] 
      \matrix (m) [matrix of math nodes,row sep=3em,column sep=3em,minimum width=2em] {
         V^{0} \defeq H^1(\Omega) ~ \bbb
            \& ~~ V^{1} \defeq H(\curl;\Omega) ~ \bbb
               \& ~~ V^{2} \defeq L^2(\Omega) ~ \bbb
         \\
      };
      \path[-stealth]
      (m-1-1) edge node [above] {$\bgrad$} (m-1-2)
      (m-1-2) edge node [above] {$\curl$} (m-1-3)
      ;
      \end{tikzpicture}
\end{equation}
and the second one consists of smoother functions, namely
\begin{equation} \label{dR_U}
	\begin{tikzpicture}[ampersand replacement=\&, baseline] 
	\matrix (m) [matrix of math nodes,row sep=3em,column sep=3em,minimum width=2em] {
      U^{0} \defeq C^1(\Omega) \cap H^1(\Omega) ~ \bbb
         \& ~~ U^{1} \defeq C^0(\Omega) \cap H(\curl;\Omega) ~ \bbb
            \& ~~ U^{2} \defeq L^2(\Omega). ~ \bbb
		\\
	};
	\path[-stealth]
	(m-1-1) edge node [above] {$\bgrad$} (m-1-2)
	(m-1-2) edge node [above] {$\curl$} (m-1-3)
	;
	\end{tikzpicture}
\end{equation}
In the sequel we will refer to \eqref{dR_V} and \eqref{dR_U} as the $H^1$ and $C^1$ sequences, respectively.
To each of these sequences, that is for $Z = V$ or $U$, we associate conforming subspaces
\begin{equation} \label{Zh}
   Z^\ell_h \defeq W^\ell_h \cap Z^\ell,
   \quad \text{ $\ell = 0, 1, 2$.} 
\end{equation}
Our \textbf{first task} will be provide an explicit characterization
of these subspaces in terms of their tensor-product spline coefficients,
and our \textbf{second task} will be to design local conforming projections of the form
\begin{equation} \label{PZ}
   P^\ell_Z : W^\ell_h \to Z^\ell_h,
\end{equation}
that is, operators which satisfy 
$P^\ell_Z W^\ell_h = Z^\ell_h$ and 
$(P^\ell_Z)^2 = P^\ell_Z$ and which only modify spline coefficients close to the pole.

A \textbf{third task} will be to build projection operators 
$\Pi^\ell_Z$ onto the discrete spaces $Z^\ell_h$,
for which the following diagram commutes:
\begin{equation} \label{cd}
	\begin{tikzpicture}[ampersand replacement=\&, baseline] 
	\matrix (m) [matrix of math nodes,row sep=3em,column sep=3em,minimum width=2em] {
      ~~ H^1(\Omega) ~ \bbb
         \& ~~ H(\curl;\Omega) ~ \bbb
            \& ~~ L^2(\Omega) ~ \bbb
		\\
         ~~ Z^0_h ~ \bbb
            \& ~~ Z^1_h ~ \bbb
                \& ~~ Z^2_h ~. \bbb
		\\
	};
	\path[-stealth]
	(m-1-1) edge node [above] {$\bgrad$} (m-1-2)
			edge node [right] {$\Pi^0_Z$} (m-2-1)
	(m-1-2) edge node [above] {$\curl$} (m-1-3)
					edge node [right] {$\Pi^1_Z$} (m-2-2)
	(m-1-3) edge node [right] {$\Pi^2_Z$} (m-2-3)
	(m-2-1) edge node [above] {$\bgrad$} (m-2-2)
	(m-2-2) edge node [above] {$\curl$} (m-2-3)
	;
	\end{tikzpicture}
\end{equation}
Commuting projections play indeed an important role in the preservation of certain structure properties at the discrete level: 
they allow to write discrete Hodge-Helmholtz decompositions 
with proper harmonic field spaces of the correct dimension 
\cite{arnold_finite_2006},
or to preserve the Hamiltonian structure in variational particle 
discretization of Vlasov-Maxwell equations \cite{campos_pinto_variational_2022}.
Note that in \eqref{cd} the operators $\Pi^\ell_Z$ may be
unbounded, in the sense that their domains 
$D(\Pi^\ell_Z)$ may be proper subspaces of the spaces from the top row,
typically consisting of smoother functions. Nevertheless, these domains
need to be dense: we therefore need to extend the projections $P^\ell_Z$ 
to infinite-dimensional spaces.

We observe that commuting projection operators on polar spline spaces
have already been proposed in \cite{holderied_magneto-hydrodynamic_2022},
by introducing special sets of geometric degrees of freedom close to the pole.
In this work we will propose an alternative construction which takes
advantage of (and preserves) the tensor-product structure of the underlying spline spaces.

A classical way to obtain commuting projections is indeed to 
interpolate geometric degrees of freedom, as is done by the De Rham map
and described in \cite{Bochev_Hyman_2006_csd,Gerritsma.2011.spec}:
the commuting properties of the resulting projection then 
essentially follows from the Stokes theorem.
On the Cartesian domain $\hat \Omega$, this approach yields 
{\em logical geometric projections} $\hat \Pi^\ell_W$ 
on the tensor-product spline spaces, which are fast to 
evaluate thanks to the Kronecker structure of the collocation matrices
to be inverted. We will review their main properties 
in Section \ref{sec:geoproj}.
Their counterparts on the physical domain, namely 
\begin{equation} \label{Pi_W}
   \Pi^\ell_W \defeq \cF^\ell \hat \Pi^\ell_W \cB^\ell
   : D(\Pi^\ell_W) \to W^\ell_h
\end{equation}
may be referred to as the (tensor-product) {\em polar geometric projections}.

These projections involve the same Kronecker product collocation matrices, however they do not map into the conforming spaces $Z^\ell_h$: 
for instance, an interpolating spline in $W^0_h$ 
has no reason to be $C^1$ at the pole.
Our solution consists in considering the composed 
{\em conforming-geometric projections} 
\begin{eqnarray} \label{Pi_Z}
    \Pi^\ell_Z \defeq P^\ell_Z \Pi^\ell_W : D(\Pi^\ell_W) \to Z^\ell_h
    \qquad \text{ (with $Z = V$ or $U$) }.
\end{eqnarray}
The commutation of the diagram \eqref{cd} then becomes 
an additional requirement in the construction 
of the conforming projections $P^\ell_Z$.

 
In summary, our construction will yield the following diagram
(with $Z = V$ or $U$)
\begin{equation}
   \label{cd_3D}
\begin{tikzpicture}[baseline=(current  bounding  box.center)]
\matrix (m)[matrix of math nodes,column sep=2.5em,row sep=4em]{
 \pgfmatrixnextcell H^1(\Omega) \pgfmatrixnextcell 
 \pgfmatrixnextcell H(\curl;\Omega) \pgfmatrixnextcell 
 \pgfmatrixnextcell L^2(\Omega) 
 \\
Z_h^0 \pgfmatrixnextcell \pgfmatrixnextcell Z_h^1 \pgfmatrixnextcell \pgfmatrixnextcell Z_h^2 \pgfmatrixnextcell 
\\
 \pgfmatrixnextcell W_h^0 \pgfmatrixnextcell \pgfmatrixnextcell W_h^1 \pgfmatrixnextcell \pgfmatrixnextcell W_h^2
 \\};
\draw[\freccia] (m-1-2) --node[above]{$\grad$} (m-1-4);
\draw[\freccia] (m-1-4) --node[above]{$\curl$} (m-1-6);
\draw[\freccia] (m-2-1) --node[above,xshift=-.5cm]{$\grad$} (m-2-3);
\draw[\freccia] (m-2-3) --node[above,xshift=-.5cm]{$\curl$} (m-2-5);
\draw[\freccia] (m-3-2) --node[above,xshift=-.65cm]{$\grad\, P^0_Z$} (m-3-4);
\draw[\freccia] (m-3-4) --node[above,xshift=-.5cm]{$\curl\, P^1_Z$} (m-3-6);

\draw[\freccia] (m-1-2) --node[right,yshift=.5cm]{$\Pi^0_W$} (m-3-2);
\draw[\freccia] (m-1-4) --node[right,yshift=.5cm]{$\Pi^1_W$} (m-3-4);
\draw[\freccia] (m-1-6) --node[right]{$\Pi^2_W$} (m-3-6);

\draw[\freccia] (m-1-2) edge[bend right=15] node[above,xshift=-.1cm]{$\Pi^0_Z$} (m-2-1);
\draw[\freccia] (m-1-4) edge[bend right=15]node[above,xshift=-.1cm]{$\Pi^1_Z$} (m-2-3);
\draw[\freccia] (m-1-6) edge[bend right=15]node[above,xshift=-.1cm]{$\Pi^2_Z$} (m-2-5);
\draw[dashed, \freccia] (m-2-1) edge[bend right=15] node[below,xshift=.1cm]{$I$} (m-1-2);
\draw[dashed,\freccia] (m-2-3) edge[bend right=15] node[below,xshift=.1cm]{$I$} (m-1-4);
\draw[dashed,\freccia] (m-2-5) edge[bend right=15] node[below,xshift=.1cm]{$I$} (m-1-6);

\draw[\freccia] (m-3-2) edge[bend right=15]node[right,yshift=.1cm]{$P^0_Z$} (m-2-1);
\draw[\freccia] (m-3-4) edge[bend right=15]node[right,yshift=.1cm]{$P^1_Z$} (m-2-3);
\draw[\freccia] (m-3-6) edge[bend right=15]node[right,yshift=.1cm]{$P^2_Z$} (m-2-5);
\draw[dashed,\freccia] (m-2-1) edge[bend right=15]node[left,xshift=-.2cm]{$I$} (m-3-2);
\draw[dashed,\freccia] (m-2-3) edge[bend right=15]node[left,xshift=-.2cm]{$I$} (m-3-4);
\draw[dashed,\freccia] (m-2-5) edge[bend right=15]node[left,xshift=-.2cm]{$I$} (m-3-6);
\end{tikzpicture}
\end{equation}
where all the paths commute except for the ones involving the identity operators, 
which are written here to specify space embeddings 
such as $Z^\ell_h \subset W^\ell_h$. 
As above, the projections $\Pi^\ell_W$ and $\Pi^\ell_Z$ should be seen as unbounded operators
with dense domains $D(\Pi^\ell_Z) = D(\Pi^\ell_W)$.

\subsection{Characterization of the conforming spline spaces}

To define proper commuting projections we begin by 
characterizing the conforming spaces \eqref{Zh}
in terms of explicit constraints on the spline coefficients.

\medskip
\begin{thm}[$H^1$-conforming polar spline sequence] \label{thm:Vseq}
   If the mapping $F$ has a first order polar singularity in the sense of 
   Definition~\ref{def:singpol}, then the conforming spaces 
   \eqref{Zh} for the $H^1$ sequence \eqref{dR_V}, namely
   \begin{equation} \label{Vh}
      \left\{
      \begin{aligned}            
      V^0_h &\defeq W^0_h \cap H^1(\Omega),
      \\
      V^1_h &\defeq W^1_h \cap H(\curl;\Omega),
      \\
      V^2_h &\defeq W^2_h \cap L^2(\Omega),
      \end{aligned}
      \right.   
   \end{equation}
   are characterized by the relations
    \begin{equation}\label{eq:VlCharacterization}
        \left\{
        \begin{aligned}
        &V^0_h = \big\{\phi = \sum_{i,j} \phi_{ij} T^0_{i,j} \in W^0_h : 
           ~ \phi_{0j} = \gamma_0 
           ~ \text{ for all $j$} 
           \big\},
        \\
        &V^1_h = \Big\{\bv = \sum_{i,j} v^s_{ij} \pmb{T}^s_{i,j} 
              + \sum_{i,j} v^\theta_{ij} \pmb{T}^\theta_{i,j} \in W^1_h : 
              \left\{
               \begin{aligned}
               &v^\theta_{0j} = 0
               \\
               &v^\theta_{1j} = v^s_{0(j+1)} - v^s_{0j}
               \end{aligned}
               \right\}
              ~\text{ for all } j \Big\},
        \\
        &V^2_h = \big\{f = \sum_{i,j} f_{ij} T^2_{i,j} \in W^2_h : 
           ~ f_{0j} = 0 ~\text{ for all } j \big\},
        \end{aligned}
        \right.   
    \end{equation}
    where $\gamma_0$ is a real parameter that may differ 
    for each $\phi \in V^0_h$
    and correspond to its value at the pole, i.e.,
    $\phi(\bx_0) = \gamma_0$.
    Moreover, we have
    \begin{equation} \label{V0=C0}
    V^0_h = W^0_h \cap C^0(\Omega).
    \end{equation}
\end{thm}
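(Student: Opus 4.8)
The plan is to reduce each conformity condition to an integrability question at the pole, exploiting that on the punctured domain $\Omega_0$ the pushforwards are genuine $C^1$-diffeomorphic transports, so that the commutation relations \eqref{com_pf} hold there: for $\ell=0$ one has $\bgrad\phi=\cF^1\hbgrad\hat\phi$, and for $\ell=1$ one has $\curl\bv=\cF^2\hcurl\hat\bv$, as classical (a.e.) identities on $\Omega_0$. The question then becomes whether these expressions, defined away from $\bx_0$, are square-integrable on all of $\Omega$. Since $\bx_0$ is a single point, it has zero $H^1$-capacity in $\RR^2$ and is therefore a removable singularity: if $\phi\in L^2(\Omega)$ and its classical gradient on $\Omega_0$ lies in $L^2(\Omega)$, then $\phi\in H^1(\Omega)$ with that gradient, and likewise for $\bv$ and $H(\curl;\Omega)$. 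I would make this rigorous with a cutoff $\chi_\varepsilon$ vanishing near $\bx_0$ and satisfying $\lvert\nabla\chi_\varepsilon\rvert\lesssim\varepsilon^{-1}$ on an annulus of area $\sim\varepsilon^2$; this both justifies removability and shows that the weak operator on $\Omega$ coincides with the pushforward computed on $\Omega_0$, with no hidden distributional mass at the pole. Thus each conformity reduces to the $L^2$-integrability of an explicit pushforward, whose near-pole behaviour is governed by the $1/s$ singular Jacobian entries recorded in Proposition~\ref{prop:singFpol}.

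Next I would isolate the singular coefficient in each case using the vanishing relations at the pole, $M_i(0)=0$ for $i\ge1$ while $M_0(0)>0$, together with the derivative identities $B_0'=-M_0$, $B_1'=M_0-M_1$ and $\per{B}_j'=\per{M}_{j-1}-\per{M}_j$ from the Appendix. For $\ell=2$, writing $f=\cF^2\hat f$ with $\hat f=\sum_{ij}f_{ij}M_i\per{M}_j$ and using \eqref{detJ_inv} gives $f\circ F=\frac{1+\cO(s)}{sD(\theta)}\big(M_0(0)\sum_j f_{0j}\per{M}_j(\theta)+\cO(s)\big)$, so that $\Norm{f}_{L^2(\Omega)}^2$, computed in logical coordinates against $\det J_F\sim sD$, has a leading term $\int_0 s^{-1}\lvert\sum_j f_{0j}\per{M}_j\rvert^2\,ds$; finiteness forces $\sum_j f_{0j}\per{M}_j\equiv0$, and by linear independence of the periodic M-splines this is exactly $f_{0j}=0$ for all $j$. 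For $\ell=0$ the same mechanism applies to the $\partial_\theta$-component of $\hbgrad\hat\phi$, whose singular contribution to $\bgrad\phi$ is $\tfrac{\partial_\theta\hat\phi}{sD}(-S,C)^{\!\top}$; since $\partial_\theta\hat\phi(0,\theta)=\sum_j(\phi_{0(j+1)}-\phi_{0j})\per{M}_j(\theta)$, square-integrability of $\bgrad\phi$ is equivalent to $\phi_{0(j+1)}=\phi_{0j}$ for all $j$, i.e.\ to a common value $\phi_{0j}=\gamma_0$.

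For the $\ell=1$ characterization I would proceed in two stages, writing $\hat\bv=(\hat v_1,\hat v_2)$ with $\hat v_1=\sum_{ij}v^s_{ij}M_i\per{B}_j$ and $\hat v_2=\sum_{ij}v^\theta_{ij}B_i\per{M}_j$. First, the only singular entries of $J_F^{-T}$ multiply the second logical component, so the singular part of $\bv$ is $\tfrac{\hat v_2}{sD}(-S,C)^{\!\top}$, and computing $\Norm{\bv}_{L^2(\Omega)}^2$ in logical coordinates yields the leading term $\int_0 s^{-1}\lvert\hat v_2(0,\theta)\rvert^2\,ds$; hence $\bv\in L^2(\Omega)$ iff $\hat v_2(0,\theta)=\sum_j v^\theta_{0j}\per{M}_j(\theta)\equiv0$, that is $v^\theta_{0j}=0$ for all $j$. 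Assuming this first constraint, I would evaluate the logical curl at the pole: $\partial_s\hat v_2(0,\theta)=M_0(0)\sum_j v^\theta_{1j}\per{M}_j(\theta)$ (using $B_0'(0)=-M_0(0)$, $B_1'(0)=M_0(0)$, $B_i'(0)=0$ for $i\ge2$ and $v^\theta_{0j}=0$), while $\partial_\theta\hat v_1(0,\theta)=M_0(0)\sum_j(v^s_{0(j+1)}-v^s_{0j})\per{M}_j(\theta)$, giving $\hcurl\hat\bv(0,\theta)=M_0(0)\sum_j\big(v^\theta_{1j}-(v^s_{0(j+1)}-v^s_{0j})\big)\per{M}_j(\theta)$. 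Applying the $1/s$-integrability analysis to $\curl\bv=\cF^2\hcurl\hat\bv$ and again invoking linear independence of the $\per{M}_j$, square-integrability of $\curl\bv$ is equivalent to $v^\theta_{1j}=v^s_{0(j+1)}-v^s_{0j}$ for all $j$. Combining the two stages yields the stated characterization of $V^1_h$; the converse directions are the contrapositives of these equivalences, a failing constraint producing a genuine $1/s$ singularity that is not cancelled by the bounded remainder.

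Finally, for the identity $V^0_h=W^0_h\cap C^0(\Omega)$ I would note that every $\phi\in W^0_h$ is continuous on $\Omega_0$, so continuity on $\Omega$ reduces to continuity at $\bx_0$; since $\phi\circ F(s,\theta)\to\sum_j\phi_{0j}\per{B}_j(\theta)$ as $s\to0$, the limit is single-valued iff $\sum_j\phi_{0j}\per{B}_j$ is constant, iff (by the partition-of-unity property of the periodic B-splines) $\phi_{0j}=\gamma_0$ for all $j$, which is exactly the constraint already shown to characterize $V^0_h$. I expect the main obstacle to be the rigorous treatment of the pole in the first step: one must argue that the weak $\bgrad$ and $\curl$ on the full domain coincide with the pushforwards computed on $\Omega_0$ rather than concealing a distributional contribution at $\bx_0$ — precisely the point where \eqref{com_pf} was warned to fail in general — so the removability/cutoff argument together with the clean separation of the $1/s$ singular part from the $L^\infty$ remainder is the crux on which every equivalence rests.
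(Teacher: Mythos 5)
Your proposal is correct and follows essentially the same route as the paper's proof in Section~\ref{sec:charconf}: push forward via the commutation relations on the punctured domain, isolate the $1/s$ leading term using the B-/M-spline values at $s=0$ and the singular structure of $J_F^{-T}$ and $\det J_F^{-1}$ from Proposition~\ref{prop:singFpol}, and read the coefficient constraints off the $L^2$-integrability test against $\det J_F \sim s D(\theta)$, with the $C^0$ identity \eqref{V0=C0} obtained from single-valuedness of the trace $\sum_j \phi_{0j}\per{B}_j$ at the pole. Your explicit cutoff/capacity argument ruling out a hidden distributional contribution at $\bx_0$ is an added piece of rigor on a step the paper leaves implicit here (the paper only addresses the possible Dirac mass later, in Proposition~\ref{prop:cF_diff}), but it does not change the substance of the argument.
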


\medskip
\begin{oss}
   In light of \eqref{V0=C0}, we may also refer to \eqref{Vh} 
   as the $C^0$ sequence.
\end{oss}
\medskip
\begin{thm}[$C^1$-conforming polar spline sequence] \label{thm:Useq}
   If $F$ is the analytical polar mapping \eqref{F_apol}, 
   then the conforming spaces 
   \eqref{Zh} for the $C^1$ sequence \eqref{dR_U}, namely
   \begin{equation} \label{Uh}
      \left\{
      \begin{aligned}            
         U^0_h &\defeq W^0_h \cap H^1(\Omega) \cap C^1(\Omega)
          &&= V^0_h \cap C^1(\Omega),         
      \\
      U^1_h &\defeq W^1_h \cap H(\curl;\Omega) \cap C^0(\Omega) 
       &&= V^1_h \cap C^0(\Omega)
      \\
      U^2_h &\defeq W^2_h \cap L^2(\Omega)
       &&= V^2_h
      \end{aligned}
      \right.   
   \end{equation}   
   are characterized by the relations
   \begin{equation}\label{eq:UlCharacterization_apol}
      \left\{
      \begin{aligned}
      &U^0_h = \left\{
         \phi = \sum_{i,j} \phi_{ij} T^0_{i,j} \in W^0_h : 
         \left\{
         \begin{aligned}
         &\phi_{0j} = \gamma_0
         \\
         &\phi_{1j} = \gamma_0
         \end{aligned}
         \right\}
         \text{ for all $j$} 
      \right\},
      \\
      &U^1_h = \left\{
         \bv = \sum_{i,j} v^s_{ij} \pmb{T}^s_{i,j} 
         + \sum_{i,j} v^\theta_{ij} \pmb{T}^\theta_{i,j} \in W^1_h : 
         \left\{
         \begin{aligned}
         &v^s_{0j} = 0 
         \\
         &v^\theta_{0j} = 0
         \\
         &v^\theta_{1j} = v^s_{0(j+1)} - v^s_{0j}
         \end{aligned}
         \right\}
         \text{ for all $j$}
      \right\},
      \\
      &U^2_h = \big\{f = \sum_{i,j} f_{ij} T^2_{i,j} \in W^2_h : 
         ~ f_{0j} = 0 ~\text{ for all } j \big\},
      \end{aligned}
      \right.   
   \end{equation}   
   where $\gamma_0$ is a real parameter that may differ for each $\phi \in U^0_h$
   and corresponds to its value at the pole, i.e.,
   $\phi(\bx_0) = \gamma_0$.
   Moreover, for any $\phi \in U^0_h$ and any 
   $\bv \in U^1_h$, it holds 
   \begin{equation} \label{grad_phi_bv_0_apol}
      \bgrad \phi(\bx_0) = 0
      \quad \text{ and } \quad
      \bv(\bx_0) = 0.
   \end{equation}
   If $F$ is a spline mapping of the form \eqref{F_spol}, then
   the conforming spaces \eqref{Zh} for the sequence
   \eqref{dR_U} are of the form 
   \begin{equation}\label{eq:UlCharacterization}
      \left\{
      \begin{aligned}
      &U^0_h = \left\{
         \phi = \sum_{i,j} \phi_{ij} T^0_{i,j} \in W^0_h : 
         \left\{
         \begin{aligned}
         &\phi_{0j} = \gamma_0
         \\
         &\phi_{1j} = \gamma_0 + \gamma_1 \cos \theta_j + \gamma_2 \sin \theta_j
         \end{aligned}
         \right\}
         \text{ for all $j$} 
      \right\},
      \\
      &U^1_h = \left\{
         \bv = \sum_{i,j} v^s_{ij} \pmb{T}^s_{i,j} 
         + \sum_{i,j} v^\theta_{ij} \pmb{T}^\theta_{i,j} \in W^1_h : 
         \left\{
         \begin{aligned}
         &v^s_{0j} = \eta_1 \cos \theta_j + \eta_2 \sin \theta_j
         \\
         &v^\theta_{0j} = 0
         \\
         &v^\theta_{1j} = v^s_{0(j+1)} - v^s_{0j}
         \end{aligned}
         \right\}
         \text{ for all $j$} 
      \right\},
      \\
      &U^2_h = \big\{f = \sum_{i,j} f_{ij} T^2_{i,j} \in W^2_h : 
         ~ f_{0j} = 0 ~\text{ for all } j \big\}.
      \end{aligned}
      \right.   
   \end{equation}
   Here, $\gamma_0$, $\gamma_1$, $\gamma_2$ are real parameters
   which may differ for each function $\phi \in U^0_h$, and
   correspond to its value and 
   gradient at the pole, namely
   \begin{equation} \label{phi_grad_phi_0}
      \phi(\bx_0) = \gamma_0,
      \qquad 
      \bgrad \phi(\bx_0) = \frac{1}{\rho_1} 
         \begin{pmatrix} \gamma_1 \\ \gamma_2\end{pmatrix}.
    \end{equation}
   Similarly, $\eta_1$ and $\eta_2$ are real parameters 
   which correspond to the value of each $\bv \in U^1_h$ at the pole, namely
    \begin{equation} \label{bv_0}
       \bv(\bx_0) = \frac{1}{\rho_1} 
       \begin{pmatrix} \eta_1 \\ \eta_2\end{pmatrix}
   \end{equation}
   where we remind that $\rho_1$ is the radius of the first control ring, 
   see~\eqref{F_spol}--\eqref{F_spol_P01}.

\end{thm}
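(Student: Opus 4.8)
The plan is to build on Theorem~\ref{thm:Vseq}. Since $U^2 = L^2(\Omega) = V^2$, the identity $U^2_h = V^2_h$ in \eqref{Uh} and its coefficient characterization are immediate. The remaining identities in \eqref{Uh}, namely $U^0_h = V^0_h\cap C^1(\Omega)$ and $U^1_h = V^1_h\cap C^0(\Omega)$, are trivial rewritings of the definitions $V^0_h = W^0_h\cap H^1(\Omega)$ and $V^1_h = W^1_h\cap H(\curl;\Omega)$ from Theorem~\ref{thm:Vseq}; one also notes that a spline in $W^0_h\cap C^1(\Omega)$ has bounded gradient near the pole and hence lies in $H^1(\Omega)$, so that $\cap H^1$ is absorbed by $\cap C^1$. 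The real work is therefore to convert the extra smoothness into coefficient conditions on the first two rings $i\in\{0,1\}$, starting from a function already satisfying \eqref{eq:VlCharacterization}.

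For $U^0_h$ I would take $\phi = \cF^0\hat\phi\in V^0_h$, so that $\phi_{0j} = \gamma_0$ and $\phi(\bx_0) = \gamma_0$ by Theorem~\ref{thm:Vseq}. Away from the pole the commutation \eqref{com_pf} gives $\bgrad\phi = \cF^1\hbgrad\hat\phi = (J_F^{-T}\hbgrad\hat\phi)\circ F^{-1}$ on $\Omega_0$, so the task becomes to analyze the limit of $J_F^{-T}\hbgrad\hat\phi$ as $s\to 0$. Using the derivative formulas \eqref{Bprime} and \eqref{perBprime} and the fact that only $M_0(0)\neq 0$ among the $M_i(0)$, I would establish the expansions $\partial_s\hat\phi(s,\theta) = b(\theta) + \cO(s)$ and $\partial_\theta\hat\phi(s,\theta) = s\,b'(\theta) + \cO(s^2)$, where $b(\theta) := M_0(0)\sum_j(\phi_{1j}-\gamma_0)\per{B}_j(\theta)$ collects precisely the first two rings. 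Inserting these together with the form of $J_F^{-T}$ read off from \eqref{JF_prop} and \eqref{detJ_inv}, the singular $1/s$ factors cancel and the two components of $\bgrad\phi$ converge, as $s\to 0$, to $(S'b - Sb')/D$ and $(Cb' - C'b)/D$. The function $\phi$ is $C^1$ at the pole exactly when these two limits are $\theta$-independent constants $(g_1,g_2)$; since the coefficient matrix of the resulting $2\times2$ system has determinant $D = CS' - SC' > 0$, this is equivalent to $b = g_1 C + g_2 S$. Substituting the explicit $C,S$ from \eqref{CS_spol} and matching against the linearly independent $\per{B}_j$ then yields $\phi_{1j} = \gamma_0 + \gamma_1\cos\theta_j + \gamma_2\sin\theta_j$ with $\gamma_i = \rho_1 g_i$, which is \eqref{eq:UlCharacterization} together with the gradient identity \eqref{phi_grad_phi_0}.

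For $U^1_h$ the argument is parallel but one degree lower, starting from $\bv\in V^1_h$ (so $v^\theta_{0j} = 0$ and $v^\theta_{1j} = v^s_{0(j+1)} - v^s_{0j}$) and directly imposing continuity of $\bv = (J_F^{-T}\hat\bv)\circ F^{-1}$ at $\bx_0$. The same bookkeeping gives $\hat v_1(0,\theta) = M_0(0)\sum_j v^s_{0j}\per{B}_j(\theta)$ and $\hat v_2(s,\theta) = s\,w_2(\theta) + \cO(s^2)$ with $w_2 := M_0(0)\sum_j v^\theta_{1j}\per{M}_j(\theta)$, and taking $s\to 0$ as above, the $\theta$-independence of the limit is equivalent to $\hat v_1(0,\cdot) = e_1 C + e_2 S$ and $w_2 = e_1 C' + e_2 S'$. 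The first identity gives $v^s_{0j} = \eta_1\cos\theta_j + \eta_2\sin\theta_j$ with $\eta_i = \rho_1 e_i$ (hence \eqref{bv_0}), and I would verify that the second identity is then automatically satisfied by the inherited relation $v^\theta_{1j} = v^s_{0(j+1)} - v^s_{0j}$, using \eqref{perBprime} to express $C',S'$ in the $\per{M}_j$ basis. The analytical case \eqref{eq:UlCharacterization_apol} and \eqref{grad_phi_bv_0_apol} then follows as a special case: there $C = \cos\theta$ and $S = \sin\theta$ are not polynomial splines, so the requirements $b = g_1 C + g_2 S$ and $\hat v_1(0,\cdot) = e_1 C + e_2 S$, whose left-hand sides lie in $\spn\{\per{B}_j\}$, force $g_1 = g_2 = 0$ and $e_1 = e_2 = 0$, i.e. $\phi_{1j} = \gamma_0$, $v^s_{0j} = 0$, and vanishing gradient and value at the pole.

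The step I expect to be the main obstacle is making the limit analysis rigorous in the direction ``$\theta$-independent ray limits $\Rightarrow$ genuine $C^1$ (resp. $C^0$) regularity at $\bx_0$''. This requires uniform control of the $\cO(s)$ remainders coming from Definition~\ref{def:singpol}, so that the convergence $s\to 0$ is uniform in $\theta$ and the limiting field is continuous across the pole, together with a standard real-analysis argument (fundamental theorem of calculus or mean value theorem) showing that a function continuous at $\bx_0$, of class $C^1$ on the punctured domain $\Omega_0$, whose gradient extends continuously to $\bx_0$, is in fact $C^1$ on all of $\Omega$. The remaining care is purely in the spline bookkeeping: verifying that $M_i(0) = 0$ for $i\ge 1$, so that only the rings $i\in\{0,1\}$ enter the pole value and first derivative, and invoking linear independence of the $\per{B}_j$ and $\per{M}_j$ to pass from functional identities to the stated coefficient relations.
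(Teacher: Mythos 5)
Your proposal follows essentially the same route as the paper's proof: starting from the $V^\ell_h$ characterization, expanding the logical derivatives at the pole via the B-spline values \eqref{BM_0}--\eqref{BM_0p}, applying $J_F^{-T}$ from Proposition~\ref{prop:singFpol} so the $1/s$ factors cancel, reducing the $C^1$ (resp.\ $C^0$) condition to the functional equation $\beta = G_1 C + G_2 S$, and concluding by linear independence of the $\per{B}_j$ (spline mapping) or by the non-polynomial nature of $\cos\theta,\sin\theta$ (analytical mapping). Your slight reorganization of the $U^1_h$ case as two conditions with one redundant, and your explicit flagging of the uniform-in-$\theta$ limit needed to upgrade ray limits to genuine regularity at $\bx_0$, are consistent with (and if anything slightly more careful than) the paper's argument.
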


Theorems~\ref{thm:Vseq} and \ref{thm:Useq} will be proven in Section~\ref{sec:charconf}.
To conclude this section we gather a few observations about these characterizations.

\medskip 
\begin{oss}
   While the spaces from the $C^0$ sequence 
   $V^\ell_h$ in \eqref{eq:VlCharacterization} do not depend
   on the mapping itself,
   we observe in the $C^1$ sequence \eqref{eq:UlCharacterization_apol}
   the spaces $U^0_h$ and $U^1_h$
   {\em do depend} on the
   precise form of the mapping $F$: indeed if the latter 
   is a spline mapping then the characterization of both spaces 
   involves its angular grid.
   Moreover, we see that a {\em locking effect} occurs 
   in the case of an analytical polar mapping: this effect, evidenced 
   by the relations \eqref{phi_grad_phi_0}, is due to the impossibility 
   of matching general variations of a spline fields with those of an 
   analytical mapping.
\end{oss}


\medskip 
\begin{oss}
   The extraction matrices $\bar{\bbol{E}}^k$ defined in 
   \cite[Sec.~3]{deepesh1} for $k = 0$ and $1$
   are consistent with the characterizations of 
   $V^0_h$ and $U^0_h$ in \eqref{eq:VlCharacterization} 
   and \eqref{eq:UlCharacterization},
   so that these spaces indeed coincide with the $C^0$ and $C^1$ 
   polar splines from \cite{deepesh1}, as one would expect.
   The spaces $U^1_h$ and $U^2_h$ also coincide with the polar spaces 
   in \cite{deepesh3}. Specifically, the first two basis functions of the 1-form polar space in \cite{deepesh3} satisfy the constraints 
   on the coefficients for $U^1_h$ as described in \eqref{eq:UlCharacterization},
   with $\eta_1 = -1/6$ and $\eta_2 = \pm \sqrt{3}/6$.
   
\end{oss}

\medskip 
\begin{oss}
    In \eqref{eq:UlCharacterization}, the characterization of the space
    $U^0_h$
    involves scalar parameters 
    $\gamma_0$, $\gamma_1$ and $\gamma_2$ 
    that can be computed from the tensor-product spline coefficients of 
    a given function $\phi = \sum_{i,j} \phi_{ij} T^0_{i,j}$ in $U^0_h$. 
    Indeed one has $\gamma_0 = \phi_{0j}$ for all $j$,
    and by using the relations \eqref{eq:trigo} from the Appendix, 
    one finds
    \begin{equation}
        \label{gamma_U0}
        \gamma_1 = \frac{2}{n_\theta}\sum_{j=0}^{n_\theta-1} 
        \phi_{1j}\cos \theta_j,
        \qquad
        \gamma_2 = \frac{2}{n_\theta}\sum_{j=0}^{n_\theta-1} 
            \phi_{1j}\sin \theta_j.
    \end{equation}
    Similarly, if
    $\bv = \sum_{i,j} v^s_{ij} \pmb{T}^s_{i,j} 
    + \sum_{i,j} v^\theta_{ij} \pmb{T}^\theta_{i,j}$ is a spline that 
    belongs to the $C^1$ space $U^1_h$, 
    then the parameters involved in the 
    characterization \eqref{eq:UlCharacterization} are given by the formulas
    \begin{equation}
        \label{eta_U1}
        \eta_1 = \frac{2}{n_\theta}\sum_{j=0}^{n_\theta-1} 
            v^s_{0j}\cos \theta_j,
        \qquad
        \eta_2 = \frac{2}{n_\theta}\sum_{j=0}^{n_\theta-1} 
            v^s_{0j}\sin \theta_j.
    \end{equation}
    We observe that both \eqref{gamma_U0} and \eqref{eta_U1} 
    may be seen as discrete Fourier transforms on the angular grid,
    and they provide some natural indications to define projection operators
    from the spaces $W^0_h$ and $W^1_h$ onto their respective subspaces 
    $U^0_h$ and $U^1_h$.
\end{oss}

\medskip
\begin{oss}
    Another consequence of \eqref{eq:UlCharacterization} 
    is that any $C^1$ tensor-product spline 
    $\phi \in U^0_h$ is of the form  
    $$
    \phi(\bx) 
        = \alpha_0(s) \gamma_0 
            + \alpha_1(s) \sum_{j=0}^{n_\theta-1} (\gamma_1 \cos \theta_j \per{B}_j(\theta) + \gamma_2 \sin \theta_j \per{B}_j(\theta))
            + r(s,\theta) 
    $$
   where 
   $ \alpha_0(s) = (B_0 + B_1)(s)$, 
   $\alpha_1(s) = B_1(s)$
   and
   $r(s,\theta) = \sum_{i \ge 2} \phi_{ij} B_i(s)\per{B}_j(\theta)$
   satisfy (using $\sum_i B_i(s) = 1$ and $B_i'(0) = 0$ for $i \ge 2$)
   $$
   \alpha_0(s) = 1 + \cO(s^2), 
   \qquad 
   \alpha_1(s) = sB_1'(0) + \cO(s^2), 
   \qquad 
   r(s, \theta) = \cO(s^2).
   $$
   This is reminiscent of the analysis in \cite{lewis_physical_1990}
   where it is shown that a smooth function decomposed in the form
   $$\psi(s,\theta) = \sum_{m \in \ZZ} a_m(s) e^{im\theta}$$
   must satisfy $a_m(s) = s^{\abs{m}} f_m(s^2)$ with $f_m$ a smooth function.
\end{oss}

\subsection{Geometric projections on the logical and polar domains}
\label{sec:geoproj}

On the logical domain, commuting projections can be obtained by interpolating geometric degrees of freedom such as point values, edge and cells integrals
\cite{Bochev_Hyman_2006_csd,Robidoux.2008.histo,Gerritsma.2011.spec}.
Specifically, one chooses interpolation nodes (a common choice for splines being the Greville points) 
along the logical axes, namely $0 = \zeta^s_{0} < \dots < \zeta^s_{n_s-1} = L$ and
$0 \le \zeta^\theta_{0} < \dots < \zeta^\theta_{n_\theta-1} < 2\pi$,
and next define the logical projection operators $\hat \Pi^\ell_W$
by the relations
$$
\hat \Pi^\ell_W v \in \hat W^\ell_h \qquad \text{ and } \qquad 
\hat \arrsigma^\ell(\hat \Pi^\ell_W v) = \hat \arrsigma^\ell(v)
$$
where $\hat \arrsigma^\ell(v) \in \RR^{N^\ell_W}$, $N^\ell_W = \dim(\hat W^\ell_h)$, 
is the vector of geometric degrees of freedom 
corresponding to 
\begin{itemize}
   \item point values at nodes $\hat \ttn_{\bi} := (\zeta^s_{i_1}, \zeta^\theta_{i_2})$ 
   for $\ell=0$,
   \item line integrals over edges 
   $\hat \tte_{d,\bi} := [\hat \ttn_{\bi}, \hat \ttn_{\bi+\be_d}]$
   for $\ell=1$,
   \item integrals over cells 
   $\hat \ttc_{\bi} := [\hat \tte_{1,\bi}, \hat \tte_{1,\bi+\be_2}]$
   for $\ell=2$.
\end{itemize}
Here the square brackets $[\cdot]$ denote convex hulls,
the indices $\bi$ run over the whole interpolation grid 
and $\be_d$ is the canonical basis vector of $\RR^2$ along axis $d$.
We also refer to \cite[Sec.~6.2]{campos_pinto_variational_2022} or 
\cite[Sec.~B.1]{guclu_broken_2023} for more details, 
and a description of the Kronecker-product structure 
of the associated collocation matrix. 
Thanks to Stokes' formula these projections commute with the differential 
operators in logical variables, in the sense that 
\begin{equation} \label{cd_log}
	\begin{tikzpicture}[ampersand replacement=\&, baseline] 
	\matrix (m) [matrix of math nodes,row sep=3em,column sep=3em,minimum width=2em] {
      ~~ H^1(\hat \Omega) ~ \bbb
         \& ~~ H(\hcurl;\hat \Omega) ~ \bbb
            \& ~~ L^2(\hat \Omega) ~ \bbb
		\\
         ~~\hat W^0_h ~ \bbb
				\& ~~ \hat W^1_h ~ \bbb
					\& ~~ \hat W^2_h ~ \bbb
		\\
	};
	\path[-stealth]
	(m-1-1) edge node [above] {$\hbgrad$} (m-1-2)
			edge node [right] {$\hat \Pi^0_W$} (m-2-1)
	(m-1-2) edge node [above] {$\hcurl$} (m-1-3)
					edge node [right] {$\hat \Pi^1_W$} (m-2-2)
	(m-1-3) edge node [right] {$\hat \Pi^2_W$} (m-2-3)
	(m-2-1) edge node [above] {$\hbgrad$} (m-2-2)
	(m-2-2) edge node [above] {$\hcurl$} (m-2-3)
	;
	\end{tikzpicture}
\end{equation}
is a commuting diagram. 
Here the projection operators are indeed unbounded operators,
since the first two ones are only defined on functions with 
well-defined pointwise values and edge tangential traces, respectively.
One may consider for instance the domain spaces
\begin{equation} \label{D_hPiW}
D(\hat \Pi^0_W) = H^1(\hat \Omega) \cap C^0(\hat \Omega)
, \qquad
D(\hat \Pi^1_W) = H(\hcurl;\hat \Omega) \cap C^0(\hat \Omega)
, \qquad
D(\hat \Pi^2_W) = L^2(\hat \Omega)
\end{equation}
or the ones considered in \cite[Sec.~B.1]{guclu_broken_2023}.
The commuting relations read then
\begin{equation} \label{com_hPi_W}
   \begin{aligned}
      &\hat \Pi^1_W \hbgrad \hat \phi = \hbgrad \hat \Pi^0_W \hat \phi
         \quad &&\text{ for all } \hat \phi \in D(\hat \Pi^0_W) 
         \text{ such that } \hbgrad \hat \phi \in D(\hat \Pi^1_W),
      \\
      &\hat \Pi^2_W \hcurl \hat \bv = \hcurl \hat \Pi^1_W \hat \bv  
         \quad &&\text{ for all } \hat \bv \in D(\hat \Pi^1_W) 
         \text{ such that } \hcurl \hat \bv \in D(\hat \Pi^2_W).
   \end{aligned}
\end{equation}

On the polar domain we compose these 
projections with the pullbacks and pushforwards, i.e., define
$\Pi^\ell_W$ as in \eqref{Pi_W}
with domains $D(\Pi^\ell_W) := V^\ell \cap \cF^\ell D(\hat \Pi^\ell_W)$.
We then need to carefully study the commutation properties of the 
singular pullback and pushforward operators in order to 
specify those of the operators $\Pi^\ell_W$: this will be done in 
Section~\ref{sec:proof_cd_Pi_W}.

\subsection{Discrete conforming projections onto the $C^0$ sequence} 
\label{sec:PV}

For the $C^0$ sequence we define the projections 
by the following expressions (where $0 \le j < n_\theta$ 
is always arbitrary).
To project onto the space $V^0_h$ we define 
\begin{equation}\label{eq:PV0_def}
   P_V^0 : W^0_h \to W^0_h 
   \quad
   \text{ with } 
   \quad
   \left\{\begin{aligned}
      P_V^0 T_{0j}^0 &\coloneqq \frac{1}{n_\theta}\sum_{k=0}^{n_\theta-1} T_{0k}^0 
       \\ 
      P_V^0 T_{ij}^0 &\coloneqq T_{ij}^0 
         \quad \text{for $i \ge 1$}
      \end{aligned} \right.
\end{equation}
which, in terms of coefficients, amounts to setting
\begin{equation}\label{eq:PV0_coefs}
P^0_V: \phi \mapsto \bar \phi
\quad \text{ with } \quad 
\left\{\begin{aligned}
    \bar \phi_{0j} &\coloneqq \frac{1}{n_\theta}\sum_{k=0}^{n_\theta-1} \phi_{0k}
    \\
    \bar \phi_{ij} &\coloneqq \phi_{ij} \quad \text{ for } i \ge 1.
\end{aligned} \right.
\end{equation}
To project onto the space $V^1_h$ we define
\begin{equation}\label{eq:PV1_def}
   P_V^1 : W^1_h \to W^1_h 
   \quad
   \text{ with } 
   \quad
   \left\{\begin{aligned}
      P_V^1 \pmb{T}_{0j}^s 
         &\coloneqq 
         \pmb{T}_{0j}^s + \pmb{T}_{1(j-1)}^\theta - \pmb{T}_{1j}^\theta
      \\ 
      P_V^1 \pmb{T}_{ij}^s 
      &\coloneqq  \pmb{T}_{ij}^s\quad \text{for $i \ge 1$}
      \\[6pt]
      P_V^1 \pmb{T}_{0j}^\theta 
         &\coloneqq  0
      \\ 
      P_V^1 \pmb{T}_{1j}^\theta 
         &\coloneqq  0
      \\ 
      P_V^1 \pmb{T}_{ij}^\theta 
         &\coloneqq  \pmb{T}_{ij}^\theta \quad \text{for $i \ge 2$}
      \end{aligned} \right.
\end{equation}
which, in terms of coefficients, amounts to setting
\begin{equation}\label{eq:PV1_coefs}
   P^1_V: \bv \mapsto \bar \bv
   \quad \text{ with } \quad
   \left\{\begin{aligned}
       \bar v_{ij}^s 
         &\coloneqq v_{ij}^s \quad \text{for $i \ge 0$}
       \\[6pt]
       \bar v_{0j}^\theta 
         &\coloneqq 0
      \\ 
      \bar v_{1j}^\theta 
         &\coloneqq v_{0(j+1)}^s - v_{0j}^s 
      \\ 
      \bar v_{ij}^\theta 
         &\coloneqq v_{ij}^\theta \quad \text{ for $i \ge 2$}.
   \end{aligned} \right.
\end{equation}
To project onto the space $V^2_h$ we finally define   
\begin{equation}\label{eq:PV2_def}
   P_V^2 : W^2_h \to W^2_h 
   \quad
   \text{ with } 
   \quad
   \left\{\begin{aligned}
      P_V^2 T_{0j}^2 &\coloneqq T_{1j}^2
       \\ 
      P_V^2 T_{ij}^2 &\coloneqq T_{ij}^2 \quad \text{(for $i \ge 1$)}
   \end{aligned} \right.
\end{equation}
which, in terms of coefficients, amounts to setting
\begin{equation}\label{eq:PV2_coefs}
P^2_V: f \mapsto \bar f
\quad \text{ with } \quad 
\left\{\begin{aligned}
    \bar f_{0j} &\coloneqq 0
    \\
    \bar f_{1j} &\coloneqq f_{0j} + f_{1j}
    \\
    \bar f_{ij} &\coloneqq f_{ij} \quad \text{ for } i \ge 2.
\end{aligned} \right.
\end{equation}

\medskip
\begin{thm} \label{thm:PV}
   The above operators $P^\ell_V$ are projections onto the conforming 
   spaces $V^\ell_h$. 
   Moreover, the associated conforming-geometric projections 
   $\Pi^\ell_V \defeq P^\ell_V \Pi^\ell_W$,
   defined by composition with the projections \eqref{Pi_W},
   yield the commuting diagrams \eqref{cd} and \eqref{cd_3D} with $Z = V$.
\end{thm}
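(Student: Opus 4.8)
The plan is to prove the two assertions separately: that each $P^\ell_V$ is a projection with image $V^\ell_h$, and that the composed operators $\Pi^\ell_V = P^\ell_V\Pi^\ell_W$ render \eqref{cd} and \eqref{cd_3D} commutative.

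For the projection property I would simply compare the coefficient rules \eqref{eq:PV0_coefs}, \eqref{eq:PV1_coefs}, \eqref{eq:PV2_coefs} with the characterization \eqref{eq:VlCharacterization}. In each degree the output coefficients satisfy the defining constraints of $V^\ell_h$ (for $\ell=0$, $\bar\phi_{0j}$ is independent of $j$; for $\ell=1$, $\bar v^\theta_{0j}=0$ and $\bar v^\theta_{1j}=\bar v^s_{0(j+1)}-\bar v^s_{0j}$; for $\ell=2$, $\bar f_{0j}=0$), so $P^\ell_V W^\ell_h\subseteq V^\ell_h$. Conversely, when the input already obeys these constraints every coefficient is left untouched, so $P^\ell_V$ restricts to the identity on $V^\ell_h$; this yields at once surjectivity onto $V^\ell_h$ and idempotency $(P^\ell_V)^2=P^\ell_V$.

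For the commuting diagrams I would write $\Pi^\ell_V=P^\ell_V\Pi^\ell_W$ and invoke the \emph{polar geometric commutation} $\bgrad\Pi^0_W=\Pi^1_W\bgrad$ and $\curl\Pi^1_W=\Pi^2_W\curl$, understood as identities in $W^\ell_h$; these are obtained in Section~\ref{sec:proof_cd_Pi_W} by transporting the logical relations \eqref{com_hPi_W} through the pullback/pushforward pair, which commute with the differentials away from the pole by \eqref{com_pf}. Granting them, the commutation of \eqref{cd} reduces to the coefficient identities $\bgrad P^0_V=P^1_V\bgrad$ and $\curl P^1_V=P^2_V\curl$ \emph{on the images of} $\Pi^0_W$ and $\Pi^1_W$, since then $\bgrad\Pi^0_V=\bgrad P^0_V\Pi^0_W=P^1_V\bgrad\Pi^0_W=P^1_V\Pi^1_W\bgrad=\Pi^1_V\bgrad$, and analogously for $\curl$. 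The point is that these restricted identities hold, even though the unrestricted identity $\bgrad P^0_V=P^1_V\bgrad$ fails on general elements of $W^0_h$. For $\ell=0$ I would use that $\cB^0\phi$ equals $\phi(\bx_0)$ on $\{s=0\}$ because $F(0,\theta)=\bx_0$; since $\zeta^s_0=0$ and $B_i(0)=\delta_{i0}$, the pole data of $\Pi^0_W\phi$ is this constant, and interpolating a constant by the periodic (partition-of-unity) $\theta$-splines returns equal coefficients $\phi_{0j}=\phi(\bx_0)$. Hence $\Pi^0_W\phi\in V^0_h$, so $P^0_V$ is the identity on it, while $\bgrad\Pi^0_W\phi\in\bgrad V^0_h\subset V^1_h$ makes $P^1_V$ the identity on $\bgrad\Pi^0_W\phi$; both sides therefore equal $\bgrad\Pi^0_W\phi$. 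For $\ell=1$ I would use that the $\theta$-component of $\cB^1\bv$ vanishes at $s=0$, because $\partial_\theta F(0,\theta)=0$; consequently every $\theta$-edge functional based at $\zeta^s_0=0$ vanishes and the coefficients satisfy $v^\theta_{0j}=0$ for $w=\Pi^1_W\bv$. Substituting $v^\theta_{0j}=0$ into \eqref{eq:PV1_coefs}, \eqref{eq:PV2_coefs} and the discrete $\curl$, a short computation gives $\curl P^1_V w=P^2_V\curl w$, which closes \eqref{cd}.

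The remaining faces of \eqref{cd_3D} then commute for elementary reasons: one checks on coefficients that $\bgrad V^0_h\subset V^1_h$ and $\curl V^1_h\subset V^2_h$, so the middle row is a subcomplex, the maps $P^\ell_V$ intertwine the broken bottom differentials $\bgrad P^0_V,\curl P^1_V$ with $\bgrad,\curl$, and the dashed arrows are just the inclusions $V^\ell_h\subset W^\ell_h$. The hard part is the polar geometric commutation invoked at the outset: the pushforwards are singular, the intermediate splines $\Pi^\ell_W\phi$ need not lie in $H^1$ or even $L^2$ (Proposition~\ref{prop:Wnc}), and \eqref{com_pf} is only guaranteed on the punctured domain $\Omega_0$, so upgrading those pointwise relations to equalities in $W^\ell_h$ — i.e.\ controlling the behaviour exactly at $s=0$ — is the delicate step, handled in Sections~\ref{sec:pbpf}–\ref{sec:proof_cd_Pi_W} through the integrability of the pullbacks (Proposition~\ref{prop:pb_Lq}) and the differential properties of the pushforwards (Proposition~\ref{prop:cF_diff}). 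Once that is in place, the contribution of $P^\ell_V$ is purely combinatorial and rests entirely on the two vanishing facts at $s=0$ above.
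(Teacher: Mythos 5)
Your proposal is correct and follows essentially the same route as the paper: the projection property is checked by comparing the coefficient rules \eqref{eq:PV0_coefs}--\eqref{eq:PV2_coefs} with the characterization \eqref{eq:VlCharacterization}, and the commutation is factored exactly as in Section~\ref{sec:commutation} into the polar geometric commutation (Proposition~\ref{cd_Pi_W}) plus the restricted discrete identities (Proposition~\ref{cd_PZ}), keyed on the same two pole facts $\phi_{0j}=\phi(\bx_0)$ and $v^\theta_{0j}=0$ from \eqref{pb_0_DPi}. The only part left implicit is the ``short computation'' for $\curl P^1_V w = P^2_V \curl w$, which the paper carries out explicitly on the basis functions $\pmb{T}^s_{0j}$ and $\pmb{T}^\theta_{1j}$ using \eqref{curl_T1}, but your reduction is sound and that computation goes through.
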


\begin{proof}[Proof (partial)]
The projection properties are easily verified by comparing their expressions in \eqref{eq:PV0_coefs}, \eqref{eq:PV1_coefs}, \eqref{eq:PV2_coefs}, with the characterization of the conforming spaces in \eqref{eq:VlCharacterization}
(remind that $P: A \to A$ is a projection onto $B \subset A$ 
if $P A \subset B$ and $P v = v$ holds for all $v \in B$).
The commutation properties will be proven in Section~\ref{sec:commutation}.
\end{proof}

By using the above formulas 
one may also write the (square) matrix representations of the projection 
operators in the tensor-product spline bases:
\begin{equation}\label{eq:PVlmatrices}\scalebox{.85}{$
\begin{array}{lll}
\bbol{P}_V^0 = \raisebox{-1.25cm}{\begin{tikzpicture}[scale=2]
\node at (.25,.75) {$\frac{1}{n_\theta}\bbol{J}$};
\node at (.25,-.15) {$0$};
\node at (1.15,-.15) {$\bbol{I}$};
\node at (1,.75) {$0$};
\draw (0,-.75) rectangle (1.75,1);
\draw (0,.5) -- (1.75,.5);
\draw (.5,-.75) -- (.5,1);
\end{tikzpicture}} &
\bbol{P}_V^1 = \raisebox{-3cm}{\begin{tikzpicture}[scale=2]
\node at (.75,.25) {$\bbol{I}$};
\draw (0,-.5) rectangle (1.5,1);
\draw[xstep=.5,ystep=.5] (0,-1.5) grid (.5,-.5);
\node at (.25,-.75) {$0$};
\node at (.25,-1.25) {$\mathring{\bbol{d}}$};
\draw (0,-2.5) rectangle (1.5,-.5);
\draw (.5,-2.5) -- (.5,-1.5);
\draw (.5,-1.5) -- (3.5,-1.5);
\node at (1,-1) {$0$};
\node at (1,-2) {$0$};
\node at (.25,-2) {$0$};
\draw (1.5,-2.5) rectangle (3.5,-.5);
\node at (2,-2) {$0$};
\node at (3,-2) {$\bbol{I}$};
\node at (3,-1) {$0$};
\draw (1.5,-.5) rectangle (3.5,1);
\draw (2.5,-2.5) -- (2.5,-.5);
\node at (2,-1) {$0$};
\node at (2.5,.25) {$0$};
\end{tikzpicture}} &
\bbol{P}_V^2 = \raisebox{-1.25cm}{\begin{tikzpicture}[scale=2]
\draw[xstep=.5,ystep=.5] (0,-.5) grid (.5,1);
\node at (.25,.25) {$\bbol{I}$};
\node at (.25,.75) {$0$};
\node at (.25,-.25) {$0$};
\node at (1,0) {$\bbol{I}$};
\node at (1,.75) {$0$};
\draw (0,-.5) rectangle (1.5,1);
\draw (.5,.5) -- (1.5,.5);
\end{tikzpicture}}
\end{array}$}
\end{equation}
The common identity blocks in $\bbol{P}_V^1$ and $\bbol{P}_V^2$ have size $n_\theta(n_s-2)$, the common identity blocks in $\bbol{P}_V^0$ and $\bbol{P}_V^1$ have size $n_\theta(n_s-1)$ and the smaller identity block in $P_V^2$ has size $n_\theta$. The blocks $\frac{1}{n_\theta}\bbol{J}$ and $\mathring{\bbol{d}}$ have both size $n_\theta$ as well. $\bbol{J}$ is the matrix whose entries are all equal to one while $\mathring{\bbol{d}}$ has the following structure and entries
\begin{equation}\label{eq:mathringd}
\mathring{\bbol{d}} = \begin{bmatrix}
-1&1\\
&\ddots&\ddots\\
&&-1&1\\
1&&&-1
\end{bmatrix}
\end{equation}
and it is the matrix associated to the derivative of periodic splines. 

\subsection{Discrete conforming projections onto the $C^1$ sequence} \label{sec:PU}

For the $C^1$ sequence we define the projections 
by the following expressions (where again 
$0 \le j < n_\theta$ is always arbitrary).
To project onto the space $U^0_h$ we define 
\begin{equation}\label{eq:PU0_def}
   P_U^0 : W^0_h \to W^0_h 
   \quad
   \text{ with } 
   \quad
   \left\{\begin{aligned}
      P_U^0 T_{0j}^0 
         &\coloneqq \frac{1}{n_\theta} \sum_{k=0}^{n_\theta-1} 
            (T_{0k}^0 + T_{1k}^0)
      \\ 
      P_U^0 T_{1j}^0
         &\coloneqq \frac{2}{n_\theta} \sum_{k=0}^{n_\theta-1} 
            \cos(\theta_k-\theta_j)T_{1k}^0
      \\ 
      P_U^0 T_{ij}^0 
         &\coloneqq T_{ij}^0 \qquad \text{for $i \ge 2$}
      \end{aligned} \right.
\end{equation}
which, in terms of coefficients, amounts to setting
\begin{equation}\label{eq:PU0_coefs}
P^0_U: \phi \mapsto \bar \phi
\quad \text{ with } \quad 
\left\{\begin{aligned}
    \bar \phi_{0j} &\coloneqq \frac{1}{n_\theta}\sum_{k=0}^{n_\theta-1} \phi_{0k}
    \\
    \bar \phi_{1j} &\coloneqq 
      \bar \phi_{0j} 
        + \frac{2}{n_\theta}\sum_{k=0}^{n_\theta-1} \cos(\theta_j-\theta_k)\phi_{1k}
      = \bar \phi_{0j} 
      + \bar \gamma_1 \cos\theta_j 
      + \bar \gamma_2 \sin\theta_j 
        \\
    \bar \phi_{ij} &\coloneqq \phi_{ij} \qquad \text{ for } i \ge 2
\end{aligned} \right.
\end{equation}
with 
\begin{equation} \label{gamma_P0U}
\bar \gamma_1 = \frac{2}{n_\theta}\sum_{k=0}^{n_\theta-1} \cos \theta_k \phi_{1k},
\qquad
\bar \gamma_2 = \frac{2}{n_\theta}\sum_{k=0}^{n_\theta-1} \sin \theta_k \phi_{1k}.
\end{equation}
To project onto the space $U^1_h$ we define
\begin{equation}\label{eq:PU1_def}
   P_U^1 : W^1_h \to W^1_h 
   \quad
   \text{ with } 
   \quad
   \left\{\begin{aligned}
      P_U^1 \pmb{T}_{0j}^s 
         &\coloneqq
         \pmb{T}_{1j}^s + 
         \frac{2}{n_\theta} \sum_{k=0}^{n_\theta-1} 
            \cos(\theta_k-\theta_j) \big(
            \pmb{T}_{0k}^s - \pmb{T}_{1k}^s
            + \pmb{T}_{1(k-1)}^\theta - \pmb{T}_{1k}^\theta
            \big)            
      \\ 
      P_U^1 \pmb{T}_{ij}^s 
         &\coloneqq 
         \pmb{T}_{ij}^s\quad \text{(for $i \ge 1$)}
      \\[6pt]
      P_U^1 \pmb{T}_{0j}^\theta &\coloneqq 0
      \\ 
      P_U^1 \pmb{T}_{1j}^\theta &\coloneqq 0
      \\ 
      P_U^1 \pmb{T}_{ij}^\theta 
         &\coloneqq \pmb{T}_{ij}^\theta 
         \quad \text{(for $i \ge 2$)}
      \end{aligned} \right.
\end{equation}
which, in terms of coefficients, amounts to setting
\begin{equation}\label{eq:PU1_coefs}
   P^1_U: \bv \mapsto \bar \bv
   \quad \text{ with } \quad
   \left\{\begin{aligned}
      \bar v_{0j}^s 
         &\coloneqq \frac{2}{n_\theta} \sum_{k=0}^{n_\theta-1} 
         \cos(\theta_j-\theta_k) v_{0k}^s
         = \bar \eta_1 \cos \theta_j + \bar \eta_2 \sin \theta_j
      \\
      \bar v_{1j}^s 
         &\coloneqq v_{1j}^s - \bar v_{0j}^s + v_{0j}^s
      \\
      \bar v_{ij}^s 
         &\coloneqq v_{ij}^s \qquad \text{for $i \ge 2$}
      \\[6pt]
       \bar v_{0j}^\theta 
         &\coloneqq 0
      \\ 
      \bar v_{1j}^\theta 
         &\coloneqq \bar v_{0(j+1)}^s - \bar v_{0j}^s 
      \\ 
      \bar v_{ij}^\theta 
         &\coloneqq v_{ij}^\theta \qquad \text{ for $i \ge 2$}
   \end{aligned} \right.
\end{equation}
with 
\begin{equation} \label{eta_P1U}
   \bar \eta_1 = \frac{2}{n_\theta} \sum_{k=0}^{n_\theta-1} \cos \theta_k v_{0k}^s
   \qquad
   \bar \eta_2 = \frac{2}{n_\theta} \sum_{k=0}^{n_\theta-1} \sin \theta_k v_{0k}^s.
\end{equation}
To project onto the space $U^2_h$ we finally set
$P_U^2 \defeq P_V^2$, see \eqref{eq:PV2_def}--\eqref{eq:PV2_coefs}.

\medskip 
\begin{oss}
   For both sequences $Z = V$ or $U$, we note that a compatibility relation holds
   between $P^0_Z$ and $P^1_Z$, namely
   \begin{equation} \label{P1_gradP0}
      P_Z^1 \pmb{T}_{0j}^s = \bgrad P_Z^0 T_{1j}^0  + \pmb{T}_{1j}^s.
   \end{equation}      
   This relation will be useful to prove the commuting properties.
\end{oss}

\medskip
\begin{thm} \label{thm:PU}
   The operators $P^\ell_U$ defined above are 
   projections onto the conforming spaces $U^\ell_h$. 
   Moreover, the associated conforming-geometric projections 
   $\Pi^\ell_U \defeq P^\ell_U \Pi^\ell_W$,
   defined by composition with the projections \eqref{Pi_W},
   yield the commuting diagrams \eqref{cd} and \eqref{cd_3D} with $Z = U$.
\end{thm}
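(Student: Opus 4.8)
The plan is to prove the statement in four stages, separating the purely algebraic projection property from the commutation, and reducing the latter to the already-available logical and pushforward commutations.

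\textbf{Projection property.} First I would check that each $P^\ell_U$ maps $W^\ell_h$ into $U^\ell_h$ and fixes $U^\ell_h$ pointwise. The range condition is immediate by comparing the output coefficients in \eqref{eq:PU0_coefs} and \eqref{eq:PU1_coefs} (and \eqref{eq:PV2_coefs} for $\ell=2$) with the characterization \eqref{eq:UlCharacterization} of Theorem~\ref{thm:Useq}: by construction $\bar\phi_{0j}$ is constant in $j$, one has $\bar\phi_{1j} = \bar\phi_{0j} + \bar\gamma_1\cos\theta_j + \bar\gamma_2\sin\theta_j$, and likewise $\bar v^s_{0j} = \bar\eta_1\cos\theta_j + \bar\eta_2\sin\theta_j$, so the defining constraints hold. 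Idempotency then amounts to $P^\ell_U v = v$ for $v \in U^\ell_h$, which follows from the discrete orthogonality of $\{1,\cos\theta_j,\sin\theta_j\}$ on the angular grid: inserting an admissible function into the discrete Fourier sums \eqref{gamma_P0U}, \eqref{eta_P1U} recovers $\bar\gamma_i=\gamma_i$ and $\bar\eta_i=\eta_i$. This step is routine and relies on Assumption~\ref{ass:ntheta} through the trigonometric identities \eqref{eq:trigo} of the Appendix.

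\textbf{Differential commutation of the conforming projections.} The core of the proof is the pair of identities $\bgrad P^0_U = P^1_U\bgrad$ and $\curl P^1_U = P^2_U\curl$, understood at the level of tensor-product spline coefficients, where $\bgrad$ and $\curl$ act on the individual pushforward basis functions through \eqref{com_pf}. Using the B- and M-spline derivative formulas I would first express $\bgrad T^0_{ij}$ and $\curl\pmb{T}^{s}_{ij},\curl\pmb{T}^{\theta}_{ij}$ in the $1$- and $2$-form bases. For $i\ge 1$ the commutation is then essentially the compatibility relation \eqref{P1_gradP0} together with the fact that $P^\ell_U$ acts as the identity away from the pole. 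The delicate case is $i=0$: here $\bgrad T^0_{0j}$ carries $\pmb{T}^{\theta}$ contributions at $i=0$ that $P^1_U$ annihilates, and one checks that the naive identity fails on a single basis function. The resolution is that the commutation is only needed on the range of the geometric projection $\Pi^0_W$ of \eqref{Pi_W}, which interpolates continuous functions and therefore produces splines whose innermost coefficients $\psi_{0j}$ are constant in $j$ (the single pole value). On the combination $\sum_j T^0_{0j}$ the offending angular sums collapse because $\sum_j\cos(\theta_k-\theta_j)=0$ by Assumption~\ref{ass:ntheta}, and the identity is restored; the analogous reduction handles $\curl P^1_U = P^2_U\curl$.

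\textbf{Geometric commutation and assembly.} I would then invoke the commutation of the polar geometric projections $\Pi^\ell_W$, namely $\bgrad\Pi^0_W=\Pi^1_W\bgrad$ and $\curl\Pi^1_W=\Pi^2_W\curl$ on the appropriate dense domains. These follow by conjugating the logical commuting relations \eqref{com_hPi_W} with the pullback/pushforward pair and using that the latter commute with the differentials away from the pole \eqref{com_pf}; the behaviour at the pole requires the integrability analysis of the singular operators carried out in Sections~\ref{sec:pbpf}--\ref{sec:commutation}. Assembling everything, for $\phi$ in the domain of $\Pi^0_U = P^0_U\Pi^0_W$ one has
\[
   \Pi^1_U\bgrad\phi = P^1_U\Pi^1_W\bgrad\phi = P^1_U\bgrad\Pi^0_W\phi = \bgrad P^0_U\Pi^0_W\phi = \bgrad\Pi^0_U\phi,
\]
where the middle equalities use the geometric commutation just stated and the conforming commutation applied to $\psi=\Pi^0_W\phi$ (which lies in the constant-innermost-coefficient subspace), while the outer ones are the definition \eqref{Pi_Z}. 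The same chain with $(\curl,P^2_U,P^1_U)$ closes the curl square, yielding the commuting diagrams \eqref{cd} and \eqref{cd_3D} with $Z=U$.

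\textbf{Main obstacle.} I expect the hardest part to be the $i=0$ analysis of the second step combined with the pole behaviour of the third: one must identify the exact subspace on which the conforming projections commute with $\bgrad$ and $\curl$ (the image of $\Pi^\ell_W$, whose innermost coefficients are constant in $\theta$), and verify that the singular pushforward does not obstruct the commutation at $\bx_0$. The $C^1$ setting adds a further wrinkle: since $U^0_h$ and $U^1_h$ depend on the mapping (analytical versus spline, cf. the locking effect \eqref{phi_grad_phi_0} and the parameters $\eta_1,\eta_2$ in \eqref{bv_0}), the range verification must use the precise near-pole form of $F$ from Definition~\ref{def:singpol} to match the prescribed pole values and gradients.
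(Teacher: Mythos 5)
Your proposal is correct and follows essentially the same route as the paper: the projection property via coefficient comparison with \eqref{eq:UlCharacterization} and the discrete orthogonality \eqref{eq:trigo}, the commutation of $P^\ell_U$ with $\bgrad$ and $\curl$ restricted to splines with constant innermost $0$-form coefficients (resp. vanishing $v^\theta_{0j}$) as in Proposition~\ref{cd_PZ}, the geometric commutation of $\Pi^\ell_W$ from Proposition~\ref{cd_Pi_W}, and the final assembly chain. In particular you correctly identify the two key points of the paper's argument — that the naive commutation fails on individual basis functions such as $T^0_{0j}$ but is restored on the range of $\Pi^0_W$ via $\sum_j\cos(\theta_k-\theta_j)=0$, and that the case $T^0_{1j}$ is handled by the compatibility relation \eqref{P1_gradP0}.
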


\begin{proof}[Proof (partial)]
   One easily sees that $P^0_U$ maps into $U^0_h$ by considering its expression
   \eqref{eq:PU0_coefs} and the characterization in \eqref{eq:UlCharacterization}. 
   The projection property then follows by comparing \eqref{gamma_U0} and \eqref{gamma_P0U}. 
   Similarly, from \eqref{eq:PU1_coefs} one can deduce that $P^1_U$ maps into $U^1_h$, and the projection property is readily verified by comparing \eqref{eta_U1} 
   and \eqref{eta_P1U}. 
   The properties of $P^2_U$ are the same as those of $P^2_V$ since these two operators coincide.
   The commutation properties will be proven in Section~\ref{sec:commutation}.
\end{proof}

   The (square) matrices associated to the operators 
   $\{P_U^\ell\}_{\ell=0}^2 : W^\ell_h \to W^\ell_h$ 
   in the tensor-product bases are as follows:
   \begin{equation}\label{eq:PUlmatrices}\scalebox{.85}{$
   \begin{array}{lll}
   \bbol{P}_U^0 = \raisebox{-1.75cm}{\begin{tikzpicture}[scale=2]
   \draw[xstep=.5,ystep=.5] (0,0) grid (1,1);
   \node at (.25,.75) {$\frac{1}{n_\theta}\bbol{J}$};
   \node at (.25,.25) {$\frac{1}{n_\theta}\bbol{J}$};
   \node at (.75,.75) {$0$};
   \node at (.75,.25) {$\bbol{p}$};
   \node at (1.5,-.5) {$\bbol{I}$};
   \draw (1,-1) rectangle (2,0);
   \draw (0,-1) rectangle (2,1);
   \end{tikzpicture}} &
   \bbol{P}_U^1 = \raisebox{-3cm}{\begin{tikzpicture}[scale=2]
   \draw[xstep=.5,ystep=.5] (0,0) grid (.5,1);
   \node at (.25,.75) {$\bbol{p}$};
   \node at (.25,.25) {$\bbol{I}-\bbol{p}$};
   \node at (1,.75) {$0$};
   \node at (.25,-.25) {$0$};
   \node at (1,0) {$\bbol{I}$};
   \draw (0,-.5) rectangle (1.5,1);
   \draw (.5,.5) -- (1.5,.5);
   \draw (.5,-.5) -- (.5,0);
   \draw[xstep=.5,ystep=.5] (0,-1.5) grid (.5,-.5);
   \node at (.25,-.75) {$0$};
   \node at (.25,-1.25) {$\mathring{\bbol{d}}\bbol{p}$};
   \draw (0,-2.5) rectangle (1.5,-.5);
   \draw (.5,-2.5) -- (.5,-1.5);
   \draw (.5,-1.5) -- (3.5,-1.5);
   \node at (1,-1) {$0$};
   \node at (1,-2) {$0$};
   \node at (.25,-2) {$0$};
   \draw (1.5,-2.5) rectangle (3.5,-.5);
   \node at (2,-2) {$0$};
   \node at (3,-2) {$\bbol{I}$};
   \node at (3,-1) {$0$};
   \draw (1.5,-.5) rectangle (3.5,1);
   \draw (2.5,-2.5) -- (2.5,-.5);
   \node at (2,-1) {$0$};
   \node at (2.5,.25) {$0$};
   \end{tikzpicture}} &
   \bbol{P}_U^2 = \raisebox{-1.25cm}{\begin{tikzpicture}[scale=2]
   \draw[xstep=.5,ystep=.5] (0,-.5) grid (.5,1);
   \node at (.25,.25) {$\bbol{I}$};
   \node at (.25,.75) {$0$};
   \node at (.25,-.25) {$0$};
   \node at (1,0) {$\bbol{I}$};
   \node at (1,.75) {$0$};
   \draw (0,-.5) rectangle (1.5,1);
   \draw (.5,.5) -- (1.5,.5);
   \end{tikzpicture}}
   \end{array}$}
   \end{equation}
   where the large identity blocks $\bbol{I}$ have size $n_\theta(n_s-2)$ 
   while the small ones in $\bbol{P}_U^1$ and $\bbol{P}_U^2$ have size $n_\theta$. As above $\bbol{J}$ is the square matrix of size $n_\theta$ whose entries are all equal to one. 
   The matrices $\bbol{p}$ and $\bbol{q} \defeq \mathring{\bbol{d}}\bbol{p}$ are also square of size $n_\theta$, 
   with entries
   $$
   \bbol{p}_{j,k} = \frac{2}{n_\theta}\cos(\theta_j-\theta_k) 
   \qquad \text{ and } \qquad
   \bbol{q}_{j,k} = (\mathring{\bbol{d}}\bbol{p})_{j,k} = \frac{2}{n_\theta}(\bbol{p}_{j+1,k} - \bbol{p}_{j,k})
   $$
   for $j,k = 0, \ldots, n_\theta-1$.
   We note that $\bbol{p}$ is symmetric, and that $\bbol{p}$ and $\bbol{q}$ are  both Toeplitz matrices
   since the angles are regular according to \eqref{F_spol_P01}. 

\medskip 
\begin{oss}
   The projection operator $\Pi^2_V$ 
   (and $\Pi^2_U$, as they coincide)
   is mass preserving, in the sense that 
   $$
   \int_\Omega \Pi^2_V f = \int_\Omega f.
   $$ 
   This follows from the fact that the logical 
   projection $\hat \Pi^2_W$ is mass preserving,
   as well as the pushforward and pullback operators
   $\cF^2$, $\cB^2$, and the conforming projection $P^2_V$
   characterized by \eqref{eq:PV2_coefs}:
   the latter property holds since
   all M-splines (and in particular the first two) 
   have the same integral, see \eqref{int_M} from the Appendix.
\end{oss}

In the remainder of this section we
specify the matrix representation of the discrete 
differential operators involved in the diagram \eqref{cd}, 
as well as regularized mass matrices 
for the non-conforming spaces $W^\ell_h$.

\subsection{Discrete derivative matrices}

On the conforming spaces $Z^0_h$ and $Z^1_h$ (again with $Z = V$ or $U$), 
the differential operators commute with their logical counterparts. 
As a consequence, we may use the matrices of the latter 
in the tensor-product B-spline bases \eqref{hT_bases}, defined as
$$
\mat{G} = \hat{\mat{G}} = \Big( \hat\sigma^1_\mu( \hbgrad \hat \Lambda^0_\nu )\Big)_{(\mu, \nu) \in \cI^1_h \times \cI^0_h}
\qquad 
\mat{C} = \hat{\mat{C}} = \Big( \hat\sigma^2_\mu( \hbgrad \hat \Lambda^1_\nu )\Big)_{(\mu, \nu) \in \cI^2_h \times \cI^1_h}
$$
where we have used the generic notation $\hat \Lambda^\ell_\mu$ for the tensor-product B-splines of $\hat W^\ell_h$, 
and $\hat\sigma^\ell_\mu$ for the 
corresponding degrees of freedom, with index sets
$$
\begin{aligned}
   \cI^0_h &= \{ (i,j) : 0 \le i < n_s, 0 \le j < n_\theta\}
   \\
   \cI^1_h &= \{ (s,i,j) : 0 \le i < n_s-1, 0 \le j < n_\theta\} \, \cup \, \{ (\theta,i,j) : 0 \le i < n_s, 0 \le j < n_\theta\}
   \\
   \cI^2_h &= \{ (i,j) : 0 \le i < n_s-1, 0 \le j < n_\theta\}.
\end{aligned}
$$
These matrices have very simple expressions,
indeed the gradient and curl of basis functions in logical variables
read
\begin{equation}
   \label{hgrad_basis}
   \hbgrad (B_i \otimes \per{B}_j) 
   = \begin{pmatrix}
      (M_{i-1} - M_i) \otimes \per{B}_j 
      \\
      B_i \otimes (\per{M}_{j-1} - \per{M}_j) 
   \end{pmatrix}
\end{equation}
and (for arbitrary coefficients $a$ and $b$)
\begin{equation}
   \label{hcurl_basis}
   \hcurl \begin{pmatrix}
   a M_i \otimes \per{B}_j 
   \\
   b B_i \otimes \per{M}_j 
   \end{pmatrix}
  = 
  b (M_{i-1} - M_i) \otimes \per{M}_j - a M_i \otimes (\per{M}_{j-1} - \per{M}_j).
\end{equation}
With these respective formulas we find
$$
\mat{G}_{(s,l,k),(i,j)} = (\delta_{l,i-1} - \delta_{l,i})\delta_{k,j},
\qquad 
\mat{G}_{(\theta,l,k),(i,j)} = \delta_{l,i}(\delta_{k,j-1} - \delta_{k,j})
$$
and 
$$
\mat{C}_{(l,k),(s,i,j)} = - \delta_{l,i} (\delta_{k,j-1} - \delta_{k,j}),
\qquad 
\mat{C}_{(l,k),(\theta,i,j)} = (\delta_{l,i-1} - \delta_{l,i})\delta_{k,j}.
$$
On the full (non-conforming) tensor-product spaces, our 
differential operators are then simply represented 
by the matrix products 
$$
\mat{G} \mat{P}^0 ~\text{ for } ~\bgrad P^0_Z: ~ W^0_h \to W^1_h 
\qquad \text{ and } \qquad
\mat{C} \mat{P}^1 ~\text{ for } ~\curl P^1_Z: ~ W^1_h \to W^2_h.
$$

\subsection{Regularised mass matrices}
\label{sec:regmass}

Because the spaces $W^1_h$ and $W^2_h$ are not subspaces of $L^2(\Omega)$, 
the broken-FEEC discretization described in Section~\ref{sec:conga} must be 
amended, as pointed out in Remark~\ref{rem:mass_unbounded}. 
In particular, the mass matrices $\mat{M}^\ell$ 
associated with the B-spline basis \eqref{T_bases} for $\ell = 1, 2$, 
are ill-defined as they contain unbounded entries.

Our solution to this issue is to equip the corresponding spaces $W^\ell_h$
with discrete inner products 
\begin{equation}
   \label{ssp}
   \ssprod{u}{v}_{h,\ell} \defeq 
   \sprod{P^\ell_Z u}{P^\ell_Z v}_{L^2(\Omega_h)}
   + \sprod{(I-\hat P^\ell_Z) \hat u}{(I-\hat P^\ell_Z) \hat v}_{\hat W^\ell_h},
   \qquad u, v \in W^\ell_h.
\end{equation}
Here 
$\hat u$ and $\hat v \in \hat W^\ell_h$ are the pullbacks of $u$ and $v$, 
$\hat P^\ell_Z := \cB^\ell P^\ell_Z$ is the conforming projection applied on 
the logical spline space $\hat W^\ell_h$ 
(which is equivalent in terms of spline coefficients)
and $\sprod{\cdot}{\cdot}_{\hat W^\ell_h}$ is an arbitrary scalar product
on $\hat W^\ell_h$, for instance 
$
\sprod{\hat u}{\hat v}_{\hat W^\ell_h} 
   = \sum_{\mu} u_\mu v_\mu
$
that involves the B-spline coefficients of $\hat u$ and $\hat v$.
We note that in \eqref{ssp} the $L^2$ product is well-defined since it 
only involves conforming functions. It is then easily verified that
the product $\ssprod{\cdot}{\cdot}_{h,\ell}$ is symmetric positive definite,
and that it coincides with the $L^2$ product on the conforming space $Z^\ell_h$, 
due to the projection properties of $P^\ell_Z$.
The resulting ``regularized'' mass matrix reads
\begin{equation}
   \label{tM}
   \tmat{M}^\ell \defeq 
   (\mat{P}^\ell)^T\mat{M}^\ell \mat{P}^\ell
   + (\mat{I}^\ell-\mat{P}^\ell)^T (\mat{I}^\ell-\mat{P}^\ell)
\end{equation}
where $\mat{P}^\ell$ and $\mat{I}^\ell$ are the matrices of the projection $P^\ell_Z$ and identity operator in $W^\ell_h$, respectively.

In Sections~\ref{sec:charconf}, \ref{sec:pbpf} and \ref{sec:commutation}
we proceed to prove our main results, namely 
Theorems~\ref{thm:Vseq} and \ref{thm:Useq} which characterize the conforming 
polar spline spaces, and Theorems~\ref{thm:PV} and \ref{thm:PU} which state 
the commutation properties of our conforming projection operators. 
Finally in Section~\ref{sec:num} we will perform some numerical experiments to 
validate the practical stability and accuracy properties of our broken-FEEC 
operators.

\section{Proofs of Theorems~\ref{thm:Vseq} and \ref{thm:Useq}}
\label{sec:charconf}

In this section we demonstrate the explicit characterizations 
of the conforming polar spaces in terms of their tensor-product spline coefficients,
namely \eqref{eq:VlCharacterization} and 
\eqref{eq:UlCharacterization_apol}--\eqref{eq:UlCharacterization}.
For every value of the space index $\ell \in \{0, 1, 2\}$ we first study the conforming space $V^\ell_h = W^\ell_h \cap V^\ell$, 
and then that of its subspace $U^\ell_h = W^\ell_h \cap U^\ell \subset V^\ell_h$.

\subsection{Study of the conforming spaces in $V^0$ and $U^0$}

We begin by observing that the equality \eqref{V0=C0}, namely $V^0_h = W^0_h \cap C^0(\Omega)$,
is straightforward using the interpolatory property of B-splines at $s=0$, 
namely \eqref{BM_0}--\eqref{BM_0p} from the Appendix, and the fact that the 
$\cF^0$ pushforward is a simple change of variable.

Next, let $\hat \phi = \sum_{ij} \phi_{ij} \hT_{ij}^0 \in \hat W^0_h$
and write $\phi = \cF^0 \hat \phi$.
Using the commutation relation \eqref{com_pf} 
we find for all $\bx = F(s,\theta) \in \Omega_0$
(i.e., for $s > 0$)
$$
\bgrad \phi(\bx) = \Big(\cF^1 \sum_{ij} \phi_{ij} \hbgrad \hT_{ij}^0\Big)(\bx) 
    = J_F^{-T} (s, \theta)\begin{pmatrix}
   \sum_i B_i'(s) \alpha_i(\theta) \\
   \sum_i B_i(s) \alpha_i'(\theta)
\end{pmatrix}
$$
with 
$\alpha_i(\theta) = \sum_j \phi_{ij} \per{B}_j(\theta)$.
Again from Equations \eqref{BM_0}--\eqref{BM_0p}, we see that B-splines behave at the pole like 
(writing $\mu = M_0(0) = B_1'(0) = -B_0'(0)$)
\begin{equation} \label{BM01i}
\begin{cases}
   B_0(s) = 1 + \cO(s) &
   \\
   B'_0(s) 
           = -\mu + \cO(s) &
   \\
   M_0(s) = \mu + \cO(s) &
\end{cases}
\quad
\begin{cases}
   B_1(s) 
         = s(\mu + \cO(s)) &
   \\ 
   B'_1(s) 
      = \mu + \cO(s) &
   \\
   M_1(s) = s(M'_1(0) + \cO(s)) &
\end{cases}
~
\text{and }
\quad
\begin{cases}
   B_i(s) = s\cO(s) &
   \\ 
   B'_i(s) = \cO(s) &
   \\
   M_i(s) = s\cO(s) & 
\end{cases}
\text{ for } i \ge 2,
\end{equation}
so that we have,
$$
\bgrad \phi(\bx) = 
   J_F^{-T}(s, \theta) \begin{pmatrix}
      -\mu(\alpha_0(\theta) - \alpha_1(\theta)) + \cO(s)
      \\
      (1+\cO(s))\alpha'_0(\theta) + s\mu\alpha'_1(\theta) + s\cO(s)
   \end{pmatrix}
$$
with
\begin{equation} \label{JF-T}
   J_F^{-T}(s, \theta) = \frac{1}{\det J_F}\begin{pmatrix}
      \partial_2 F_2 & -\partial_1 F_2 \\
      -\partial_2 F_1 & \partial_1 F_1
  \end{pmatrix}(s, \theta)
  = \frac{1}{s D(\theta)}
    \begin{pmatrix}
      s(S'(\theta) + \cO(s)) & -S(\theta) + \cO(s) \\
      -s(C'(\theta) + \cO(s)) & C(\theta) + \cO(s)
  \end{pmatrix}
\end{equation}
where the second equality follows from Proposition~\ref{prop:singFpol}.
This gives
\begin{equation}  
\bgrad \phi(\bx) 
   =
   \frac{1}{s D(\theta)}\begin{pmatrix}
      -(S(\theta) + \cO(s))\alpha_0'(\theta)
      - s\Big(\mu (S(\theta) \alpha_1' + S'(\theta)(\alpha_0 - \alpha_1)) + \cO(s)\Big)
      \\[4pt]
      (C(\theta) + \cO(s))\alpha_0'(\theta)
      + s\Big(\mu (C(\theta) \alpha_1' + C'(\theta)(\alpha_0 - \alpha_1)) + \cO(s)\Big)
\end{pmatrix}.
\end{equation}
By writing the $L^2$ norm on the polar domain as
\begin{equation}
   \label{grad_phi_L2}
   \norm{\bgrad \phi}^2_{L^2(\Omega)} = \int_0^L \int_0^{2\pi} \abs{\bgrad \phi(F(s,\theta))}^2 \det J_F(s,\theta) \rmd s \rmd \theta,
\end{equation}
we then see that $\phi \in H^1(\Omega)$ if and only if $\alpha_0'(\theta) = 0$, 
which amounts to $\phi_{0j} = \phi_{00}$ for all $j$
(as in \eqref{eq:VlCharacterization} we will denote this constant coefficient by $\gamma_0$) 
by using the elementary properties of the B-splines.
This proves the characterization of $V^0_h$ in 
\eqref{eq:VlCharacterization}.
Turning to $U^0_h$, we now observe that for $\phi \in V^0_h$
we have
\begin{equation}
   \label{grad_phi}
   \bgrad \phi(\bx) 
   =
   \frac{1}{D(\theta)}\begin{pmatrix}
      -(S(\theta) \beta'(\theta) - S'(\theta)\beta(\theta))
      \\[4pt]
       C(\theta) \beta'(\theta) - C'(\theta)\beta(\theta)
\end{pmatrix}
+ \cO(s)
\end{equation}
where we have written 
\begin{equation}\label{beta}
\beta(\theta) 
   = \mu(\alpha_1(\theta) - \alpha_0)
   = \mu \sum_j(\phi_{1j} - \phi_{0j}) \per{B}_j(\theta).
\end{equation}
This shows that $\bgrad \phi$ is bounded on $\Omega$. Its continuity then amounts to 
the existence of a single-valued gradient at the pole 
$\bx_0 = F(0,\theta)$, say 
$\bG = \bgrad \phi(F(0,\theta)) \in \RR^2$. 
Thus, $\phi$ is in $C^1(\Omega)$ if and only if for all $\theta$, we have
\begin{equation} \label{beta_conds}   
\begin{cases}
-S(\theta) \beta'(\theta) + S'(\theta)\beta(\theta)
= G_1 D(\theta)
\\[4pt]
\phantom{-} C(\theta) \beta'(\theta) - C'(\theta)\beta(\theta)
 = G_2 D(\theta)
\end{cases}
\quad \text{ i.e., } \quad
\begin{cases}
   S'(\beta - G_1 C)
   = S (\beta - G_1 C)' 
   \\[4pt]
   C'(\beta - G_2 S )
    = C(\beta - G_2 S)' 
\end{cases}   
\end{equation}
where the relation $D = CS' - SC'$ follows from Proposition~\ref{prop:singFpol}.
It is easy to see that equations \eqref{beta_conds} have a unique solution, namely
\begin{equation} \label{C1_alpha}
   \beta(\theta) = G_1 C(\theta) + G_2 S(\theta).
\end{equation}
With the analytical polar mapping \eqref{F_apol}, this yields
$\beta(\theta) = G_1 \cos \theta + G_2 \sin \theta$
which, since $\beta$ is piecewise polynomial,
is only possible for 
$\beta(\theta) = 0 = G_1 = G_2$:
This proves the first relation in \eqref{grad_phi_bv_0_apol}, and 
from \eqref{beta} it also shows that
$\phi_{1j} = \phi_{0j}$ for all $j$, 
which, using $\phi_{0j} = \gamma_0$,
proves the characterization of $U^0_h$ in 
\eqref{eq:UlCharacterization_apol}.
With the spline mapping \eqref{F_spol}, the expressions \eqref{CS_spol} yield
$$
\beta(\theta) 
=  G_1 \rho_1 \mu 
   \sum_{j=0}^{n_\theta-1} \cos\theta_j \per{B}_j(\theta)
   +
   G_2 \rho_1 \mu \sum_{j=0}^{n_\theta-1} \sin\theta_j \per{B}_j(\theta)
$$
which, according to \eqref{beta}, amount to
$
\phi_{1j} - \phi_{0j} = 
   G_1 \rho_1 \cos\theta_j
   +
   G_2 \rho_1 \sin\theta_j.
$
Using again that $\phi_{0j} = \gamma_0$ for all $j$, 
this proves the characterization of $U^0_h$ in
\eqref{eq:UlCharacterization} with 
$\gamma_d = G_d \rho_1$, which in turn also proves 
relation \eqref{phi_grad_phi_0}.

\subsection{Study of the conforming spaces in $V^1$ and $U^1$}

Consider next 
$\hat \bv \in \hat W^1$,
written in the form
$$
\hat \bv(s,\theta)
   =  \sum_{i,j} v^s_{i} \hbT^s_{ij}(s,\theta) 
         + \sum_{i,j} v^\theta_{ij} \hbT^\theta_{ij}(s,\theta)
   =
   \sum_{i} \begin{pmatrix} 
      M_i(s) \alpha^s_i(\theta) \\
      B_i(s) \alpha^\theta_i(\theta)
   \end{pmatrix}
   \quad \text{ with } \quad 
   \begin{cases}
      \alpha^s_i(\theta) = \sum_j v^s_{ij} \per{B}^s_{j}(\theta) 
      \\
      \alpha^\theta_i(\theta) = \sum_{j} v^\theta_{ij} \per{M}_{j}(\theta)
   \end{cases}
$$
for which the logical curl is
$
\hcurl \hat \bv(s,\theta) 
   = 
   \sum_{i} 
   \big(B'_i(s) \alpha^\theta_i(\theta)
   - M_i(s) (\alpha^s_i)'(\theta)\big).
   $
On $\Omega_0$ (i.e., for $\bx = F(s,\theta)$ with $s > 0$), 
the pushforward $\bv \defeq \cF^1 \hat \bv$ writes
\begin{equation}
   \bv(\bx) = J_F^{-T} \hat \bv (s, \theta)
   = \frac{1}{s D(\theta)} \sum_{i} 
   \begin{pmatrix}
     s(S'(\theta) +\cO(s)) M_i(s) \alpha^s_i(\theta)
         + (-S(\theta) + \cO(s)) B_i(s) \alpha^\theta_i(\theta)
         \\
     -s(C'(\theta) +\cO(s)) M_i(s) \alpha^s_i(\theta)
      +  (C(\theta) + \cO(s)) B_i(s) \alpha^\theta_i(\theta)
   \end{pmatrix}
\end{equation}
and by using \eqref{com_pb_curl} and Proposition~\ref{prop:singFpol}, its curl is
\begin{equation}
   \label{curl_bv}
   \curl \bv(\bx) 
   = (\cF^2 \hcurl \hat \bv)(\bx) 
   = 
   (\det J_F )^{-1} \hcurl \hat \bv (s,\theta)
   = \frac{1+\cO(s)}{s D(\theta)} \sum_{i} 
   \big(B'_i(s) \alpha^\theta_i(\theta)
   - M_i(s) (\alpha^s_i)'(\theta)\big).
\end{equation}
Using the behaviour of B-splines at the pole recalled in \eqref{BM01i}, 
we find that (with implicit dependence on $\theta$)
\begin{equation} \label{bv}
\bv(\bx) = \frac{1}{s D} 
\begin{pmatrix}
   -(S + \cO(s)) \alpha^\theta_0
   - s \mu(S  \alpha^\theta_1 - S' \alpha^s_0)
   \\
    (C + \cO(s)) \alpha^\theta_0
       + s \mu (C \alpha^\theta_1 - C' \alpha^s_0)
\end{pmatrix}
+ \cO(s)
\end{equation}
and 
$$
\curl \bv(\bx) =  \frac{\mu \big(\alpha^\theta_1 - \alpha^\theta_0 - (\alpha^s_0)'\big)}{s D}  +\cO(s) 
$$
so that, reasoning as above (see \eqref{grad_phi_L2}), gives
$$
\left.
\begin{aligned}
   \bv \in L^2(\Omega)  
      & \iff ~ \alpha^\theta_0 = 0 
   \\
   \curl \bv \in L^2(\Omega) 
      & \iff ~ \alpha^\theta_1 - \alpha^\theta_0 - (\alpha^s_0)' = 0
\end{aligned}
\right\}
~ \iff ~ 
\left\{
\begin{aligned}
   &v_{0j}^\theta = 0 ~ 
   \\
   &v_{1j}^\theta = v_{0(j+1)}^s - v_{0j}^s
\end{aligned}
\right.
~~  \text{ (for all $j$), }
$$
which is precisely the characterization of $V^1_h$ in 
\eqref{eq:VlCharacterization}.
Next for $\bv \in V^1_h$, we see that \eqref{bv} yields 
$$
\bv(\bx) = \frac{1}{D(\theta)} 
\begin{pmatrix}
   - \mu(S (\alpha^s_0)' - S'\alpha^s_0)(\theta)
   \\
   \mu(C (\alpha^s_0)' - C' \alpha^s_0)(\theta)
\end{pmatrix}
+ \cO(s)
$$
so that the behaviour at the pole is the same as 
$\bgrad \phi$ in \eqref{grad_phi}, here with $\beta = \mu \alpha^s_0$.
In particular, the same arguments as above show that 
$$
\bv \in C^0(\Omega)
~ \iff ~ 
\text{for all $j$}, \quad
\begin{cases}
   \alpha^s_{0j} = 0  & \text{ if $F$ is the analytical mapping \eqref{F_apol}}
   \\
   \alpha^s_{0j} = \eta_1 \cos\theta_j + \eta_2 \sin\theta_j
   & \text{ if $F$ is the spline mapping \eqref{F_spol}}
\end{cases}
$$
with constants corresponding to $\eta_d = v_d(\bx_0) \rho_1$.
This proves the characterization of $U^1_h$ in 
\eqref{eq:UlCharacterization_apol} and 
\eqref{eq:UlCharacterization} for the analytical and spline mappings, respectively.
It also shows the second relation in \eqref{grad_phi_bv_0_apol} 
for the analytical mapping, and the relation \eqref{bv_0} 
for a spline mapping.

\subsection{Study of the conforming spaces in $V^2 = U^2$}

Finally, let $\hat f \in \hat W^2_h$ in the form
$\hat f(s,\theta) = \sum_{i} M_i(s) \tilde \alpha_i(\theta)$
with $\tilde \alpha_i(\theta) \defeq \sum_j f_{ij} \per{M}_j(\theta)$,
and write
\begin{equation*}
   \begin{aligned}
   f(\bx) 
   &= (\cF^2 \hat f)(\bx) 
   = (\det J_F)^{-1} \hat f (s,\theta)
   = \frac{1+\cO(s)}{s D(\theta)} 
   \sum_{i} M_i(s) \tilde \alpha_i(\theta)
   \\
   &= \frac{1}{s D(\theta)} 
   (M_0(0) \tilde \alpha_0(\theta) + \cO(s)).   
   \end{aligned}
\end{equation*}
By reasoning as above this readily shows that
$$
f \in L^2(\Omega) ~ \iff ~ \tilde \alpha_0 = 0 ~ \iff ~ f_{0j} = 0 ~\text{ for all $j$,}
$$ 
which proves the characterization of $V^2_h$ and $U^2_h$ in \eqref{eq:VlCharacterization}
and \eqref{eq:UlCharacterization}.

\section{Pullbacks and pushforwards with polar singularity}
\label{sec:pbpf}

In this section we establish a few properties of the pullback and pushforward
operators associated with polar mappings in the sense of 
Definition~\ref{def:singpol}.
In Section~\ref{sec:commutation}, these properties will be instrumental 
in proving that our polar conforming-geometric 
projections indeed commute with the 
differential operators as shown in Diagram~\ref{cd}.

\subsection{Properties of the pullback operators}
\label{sec:pb}

We first verify that the pullbacks \eqref{pb} commute 
with the differential operators.

\medskip
\begin{prop} \label{prop:com_pb}
   Let $\phi$ in $H^1(\Omega)$ and $\bv \in H(\curl;\Omega)$. 
   Then the following relations hold in $L^2(\hat \Omega)$:
   \begin{align}
      \cB^1 \bgrad \phi &= \hbgrad \cB^0 \phi \label{com_pb_grad}
      \\
      \cB^2 \curl \bv &= \hcurl \cB^1 \bv \label{com_pb_curl}.
   \end{align}
\end{prop}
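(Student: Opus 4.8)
The plan is to prove both identities by an approximation argument that reduces them to smooth forms and then passes to the limit using the closedness of the weak differential operators, exploiting that the pullbacks are $L^2$-continuous (Proposition~\ref{prop:pb_Lq}). First I would recall the standard density facts on the bounded (smooth) domain $\Omega$: $C^\infty(\overline\Omega)$ is dense in $H^1(\Omega)$ and smooth vector fields are dense in $H(\curl;\Omega)$. The maps $\phi\mapsto\cB^0\phi$ and $\bv\mapsto\cB^1\bv$ are continuous from $L^2(\Omega)$ into $L^2(\hat\Omega)$ by Proposition~\ref{prop:pb_Lq}, and both $\hbgrad$ and $\hcurl$ are closed operators on $L^2(\hat\Omega)$; the strategy is therefore to establish each identity for smooth arguments and then invoke closedness.

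For the gradient relation \eqref{com_pb_grad}, the key observation is that for $\phi\in C^\infty(\overline\Omega)$ the composition $\cB^0\phi=\phi\circ F$ is $C^1$ on all of $\hat\Omega$ (since $F\in C^1$), and the classical chain rule gives $\hbgrad(\phi\circ F)=J_F^{T}(\bgrad\phi\circ F)=\cB^1\bgrad\phi$ everywhere, \emph{including the pole}: no special treatment of $\{s=0\}$ is needed because the chain rule only requires $F\in C^1$ and $J_F$ is bounded by \eqref{JF_prop}. Taking $\phi_n\in C^\infty(\overline\Omega)$ with $\phi_n\to\phi$ in $H^1(\Omega)$, we then have $\cB^0\phi_n\to\cB^0\phi$ and $\hbgrad\cB^0\phi_n=\cB^1\bgrad\phi_n\to\cB^1\bgrad\phi$ in $L^2(\hat\Omega)$, so closedness of $\hbgrad$ yields $\cB^0\phi\in H^1(\hat\Omega)$ with the claimed identity.

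The curl relation \eqref{com_pb_curl} is the delicate part, and I expect it to be the main obstacle. For $\bv\in C^\infty(\overline\Omega)$ the field $\cB^1\bv=J_F^{T}(\bv\circ F)$ is only as regular as $J_F$, so a naive computation of $\hcurl\cB^1\bv$ brings in second derivatives of $F$; the identity $\hcurl\cB^1\bv=\cB^2\curl\bv$ nevertheless holds because these second-order terms cancel, which is exactly the fact that the pullback of differential forms commutes with the exterior derivative, the only nontrivial input being $d\,dF_i=0$ (equality of mixed partials). I would make this rigorous by noting that the weak curl on $\hat\Omega$ is tested against functions supported in the interior, so the collapsed edge $\{s=0\}\subset\partial\hat\Omega$ plays no role; on each open cell where $F$ is smooth the classical pullback commutation holds pointwise, and since $\cB^1\bv$ is globally continuous on $\hat\Omega$ (again by \eqref{JF_prop}) its tangential trace matches across the interior knot lines, so no spurious distributional curl is produced there. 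Hence $\cB^1\bv\in H(\hcurl;\hat\Omega)$ with $\hcurl\cB^1\bv=\cB^2\curl\bv$. Approximating $\bv$ by smooth fields in $H(\curl;\Omega)$ and using the $L^2$-continuity of $\cB^1$ and $\cB^2$ together with the closedness of $\hcurl$ then extends the identity to all of $H(\curl;\Omega)$. The crux is the second-derivative cancellation together with the verification that the interior interfaces contribute nothing, which follows from the continuity of $\cB^1\bv$.
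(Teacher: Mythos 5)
Your proposal is correct and follows essentially the same route as the paper: establish both identities for smooth arguments (chain rule for the gradient, cancellation of the second-derivative terms of $F$ by equality of mixed partials for the curl), then pass to general $\phi$ and $\bv$ by density together with the $L^2$-continuity of the pullbacks from Proposition~\ref{prop:pb_Lq}. The only difference is in how the curl identity is made rigorous for smooth $\bv$ when $F$ is merely $C^1$: the paper computes $\hat \partial_i \hat v_j$ directly in the sense of distributions, observing that each product $(\hat \partial^2_{ij} F_k)(v_k \circ F)$ is a well-defined distribution, whereas you argue cell by cell using the piecewise smoothness of the mapping and the matching of the tangential trace of $\cB^1 \bv$ across interior knot lines --- both are valid, the paper's variant being marginally more general since it does not rely on piecewise smoothness of $F$.
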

\begin{proof}
   Considering first the case of a smooth $\phi \in C^1(\overline \Omega)$,
   we see that its pullback $\hat \phi$
   is $C^1$ on the logical domain, and we have
   $
   \hat \partial_i \hat \phi 
      = \sum_j  ((\partial_j \phi) \circ F ) \hat \partial_i F_j
      = (J_F^T (\bgrad \phi \circ F))_i 
   $
   in a pointwise sense: this shows 
   \eqref{com_pb_grad} for smooth $\phi$. 
   For $\phi \in H^1(\Omega)$ the equality follows from the density of
   $C^1(\overline \Omega)$, together with the continuity of the pullbacks
   betwen $L^2$ spaces, see Proposition~\ref{prop:pb_Lq}.

   Turning to \eqref{com_pb_curl} we consider again a smooth 
   $\bv \in C^1(\overline \Omega)$. Because $F$ is only assumed $C^1$, its 
   Jacobian matrix $J_F$ is only continuous
   and the pullback $\hat \bv := \cB^1\bv = J_F^T (\bv \circ F)$
   may not be $C^1$, but since $(\bv \circ F)$ is $C^1$ one can compute
   in distribution's sense
   $$
   \hat \partial_i \hat v_j 
      = \hat \partial_i \sum_k (\hat \partial_j F_k) (v_k \circ F )
      = \sum_k \Big((\hat \partial^2_{ij} F_k) (v_k \circ F ) 
      + \sum_l (\hat \partial_i F_l \hat \partial_j F_k)  ((\partial_l v_k) \circ F )
      \Big),
   $$
   indeed each term in the second sum is a well-defined distribution.
   From this, we infer that
   $$
   \hcurl \hat \bv = \hat \partial_1 \hat v_2 - \hat \partial_2 \hat v_1 
   = \sum_{k,l} (\hat \partial_1 F_l \hat \partial_2 F_k - \hat \partial_2 F_l \hat \partial_1 F_k) 
      ((\partial_l v_k) \circ F )      
   = \det J_F ((\curl \bv) \circ F ) = \cB^2 \curl \bv.
   $$
   Since the right hand side is a continuous function from Proposition~\ref{prop:pb_Lq}, 
   this shows that $\hcurl \hat \bv =  \cB^2 \curl \bv$ holds in a pointwise sense.
   The result for $\bv \in H(\curl;\Omega)$ follows again from the density of 
   smooth functions and the continuity of the pullbacks in $L^2$, see 
   Proposition~\ref{prop:pb_Lq}.
\end{proof}

Similar relations hold for the sequence 
\begin{equation} \label{dR_Om_dual}
  H(\bcurl;\Omega) \xrightarrow{ \mbox{$~ \bcurl ~$}}
    H(\div;\Omega) \xrightarrow{ \mbox{$~ \div ~$}}
      L^2(\Omega)
\end{equation}
with pushforward operators $\tilde \cF^\ell: C^0(\hat \Omega_0) \to C^0(\overline \Omega_0)$ defined as
\begin{equation} \label{pf_tilde}
  \left\{
  \begin{aligned}
  &\tilde \cF^0 : \hat g \mapsto g := \hat g \circ F^{-1}
  \\
  &\tilde \cF^1 : \hat \bw \mapsto \bw :=  \big((\det J_F^{-1}) J_F \hat \bw \big)\circ F^{-1}
  \\
  &\tilde \cF^2 : \hat \psi \mapsto \psi :=  \big(\det J_F^{-1} \hat \psi \big)\circ F^{-1}
  \end{aligned}
  \right.
\end{equation}
and inverse (pullback) operators $\tilde \cB^\ell: C^0(\overline \Omega_0) \to C^0(\hat \Omega_0)$
with expressions
\begin{equation} \label{pb_tilde}
   \left\{
   \begin{aligned}
   &\tilde \cB^0 : \phi \mapsto \hat \phi := \phi \circ F
   \\
   &\tilde \cB^1 : \bw \mapsto \hat \bw :=   (\det J_F )J_F^{-1} (\bw \circ F)
   \\
   &\tilde \cB^2 : f \mapsto \hat f :=  \det J_F (f \circ F).
   \end{aligned}
   \right.
\end{equation}
 
By reasoning as above, we indeed obtain the following result.
\medskip
\begin{prop} \label{prop:com_pb_tilde}
   The pullbacks $\tilde \cB^\ell$ map continuously 
   $L^q(\Omega)$ to $L^q(\hat \Omega)$, $1 \le q \le \infty$,
   and $C^0(\overline \Omega)$ to $C^0(\hat \Omega)$.
   Moreover, for any $g \in H(\bcurl;\Omega)$ and $\bv \in H(\div;\Omega)$,
   the following relations hold in $L^2(\hat \Omega)$:
   \begin{align}
      \tilde \cB^1\bcurl g &= \hbcurl \tilde \cB^0 g 
      \label{com_pb_bcurl}
      \\
      \tilde \cB^2 \div \bw &= \hdiv \tilde \cB^1 \bw.
      \label{com_pb_div}
   \end{align}
\end{prop}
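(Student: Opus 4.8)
The plan is to deduce every assertion from the already-established properties of the primal pullbacks $\cB^\ell$, exploiting the rotation that links the two de Rham sequences, namely $\bcurl\phi = R\bgrad\phi$ and $\div\bv = \curl R^{-1}\bv$ from \eqref{startdiffeq}. The point is that the dual operators differ from the primal ones only by conjugation with the constant matrix $R$, so the commutation relations \eqref{com_pb_bcurl}--\eqref{com_pb_div} should be a rotated restatement of Proposition~\ref{prop:com_pb}.

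First I would dispatch the continuity claims. Comparing \eqref{pb} with \eqref{pb_tilde} shows $\tilde\cB^0 = \cB^0$ and $\tilde\cB^2 = \cB^2$, so for these two indices the continuity is exactly that of Proposition~\ref{prop:pb_Lq}. For $\tilde\cB^1$ the multiplying matrix is $(\det J_F)J_F^{-1}$, i.e. the cofactor (adjugate) matrix of $J_F$, whose entries are, up to sign, the first partial derivatives of $F$. By \eqref{JF_prop} these are bounded on $\hat\Omega$ — this is the crucial point, since although $J_F^{-1}$ alone blows up at the pole, its product with $\det J_F$ does not — so the same change-of-variables estimate as in Proposition~\ref{prop:pb_Lq} gives continuity of $\tilde\cB^1$ from $L^q(\Omega)$ to $L^q(\hat\Omega)$ and from $C^0(\overline\Omega)$ to $C^0(\hat\Omega)$.

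For the commutation relations, I would first note that since $R$ is constant the identities of \eqref{startdiffeq} also hold in the logical coordinates, that is $\hbcurl = R\,\hbgrad$ and $\hdiv = \hcurl R^{-1}$. Next I would establish the two pointwise intertwining identities $\tilde\cB^1 R = R\,\cB^1$ and $\cB^1 R^{-1} = R^{-1}\tilde\cB^1$; in view of \eqref{pb}--\eqref{pb_tilde} these reduce to the $2\times 2$ matrix facts $(\det J_F)J_F^{-1}R = R\,J_F^{T}$ and $J_F^{T}R^{-1} = R^{-1}(\det J_F)J_F^{-1}$, each a one-line consequence of the adjugate formula $(\det A)A^{-1}=\operatorname{adj}(A)$. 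Granting these, \eqref{com_pb_bcurl} follows from
\begin{equation*}
\tilde\cB^1\bcurl g = \tilde\cB^1 R\,\bgrad g = R\,\cB^1\bgrad g = R\,\hbgrad\cB^0 g = \hbcurl\tilde\cB^0 g,
\end{equation*}
where the middle equalities use the first intertwining identity and Proposition~\ref{prop:com_pb} (namely $\cB^1\bgrad g = \hbgrad\cB^0 g$), and the last one uses $\cB^0=\tilde\cB^0$ together with $R\,\hbgrad=\hbcurl$. Likewise \eqref{com_pb_div} follows from
\begin{equation*}
\tilde\cB^2\div\bw = \cB^2\curl(R^{-1}\bw) = \hcurl\,\cB^1(R^{-1}\bw) = \hcurl R^{-1}\tilde\cB^1\bw = \hdiv\tilde\cB^1\bw,
\end{equation*}
using $\tilde\cB^2=\cB^2$, Proposition~\ref{prop:com_pb} ($\cB^2\curl=\hcurl\cB^1$), the second intertwining identity, and $\hcurl R^{-1}=\hdiv$.

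I do not expect a serious obstacle. The commutation is essentially a rotated copy of Proposition~\ref{prop:com_pb}, and the passage from smooth $g,\bw$ to general elements of $H(\bcurl;\Omega)$ and $H(\div;\Omega)$ is handled, exactly as in that proposition, by density of smooth functions together with the $L^2$-continuity of the pullbacks just proved. The only place where the polar singularity genuinely intervenes is the boundedness of the cofactor matrix needed for the continuity of $\tilde\cB^1$; the intertwining identities themselves are coordinate-independent and wholly insensitive to the singularity.
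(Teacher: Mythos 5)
Your proof is correct, but it takes a genuinely different route from the paper's. The paper disposes of this proposition with ``by reasoning as above'', i.e.\ it intends the reader to repeat for $\tilde\cB^\ell$ the same chain used for $\cB^\ell$ in Proposition~\ref{prop:com_pb}: a direct pointwise/distributional computation of $\hbcurl(\tilde\cB^0 g)$ and $\hdiv(\tilde\cB^1\bw)$ via the chain rule (with the second-derivative terms of $F$ cancelling), followed by density of smooth functions and the $L^q$-continuity of the pullbacks. You instead \emph{deduce} the dual statements algebraically from the already-proven primal ones, using the constant rotation $R$ of \eqref{startdiffeq} and the two intertwining identities $(\det J_F)J_F^{-1}R = R\,J_F^{T}$ and $J_F^{T}R^{-1} = R^{-1}(\det J_F)J_F^{-1}$, which are indeed immediate from the $2\times2$ adjugate formula; together with $\hbcurl = R\,\hbgrad$, $\hdiv = \hcurl R^{-1}$, $\tilde\cB^0=\cB^0$, $\tilde\cB^2=\cB^2$, and the observations that $H(\bcurl;\Omega)=H^1(\Omega)$ and that $R^{-1}$ maps $H(\div;\Omega)$ onto $H(\curl;\Omega)$, your two displayed chains are complete proofs of \eqref{com_pb_bcurl}--\eqref{com_pb_div} with no separate density step needed. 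Your treatment of the continuity of $\tilde\cB^1$ -- identifying $(\det J_F)J_F^{-1}$ as the cofactor matrix, whose entries are bounded by \eqref{JF_prop} even though $J_F^{-1}$ alone blows up at the pole -- is exactly the point the paper relies on implicitly. What your approach buys is brevity and a clean separation of concerns (all analysis is confined to the primal case, the dual case being pure linear algebra); what the paper's approach buys is uniformity, since the same template extends to settings where the two sequences are not related by a constant rotation.
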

\smallskip



The following proposition gathers additional properties of the pullback
operators, that will be useful in the next section.

\medskip
\begin{prop} \label{prop:pb_0}
   One has 
   \begin{equation} \label{cov}
      \int_{\Omega} \phi f
         = \int_{\hat \Omega} \Big(\cB^0 \phi \Big)\, \Big(\tilde \cB^2 f\Big)
         \qquad \text{ and } \qquad
         \int_{\Omega} \bv \cdot \bw
         = \int_{\hat \Omega} \Big(\cB^1 \bv \Big)\cdot \Big( \tilde \cB^1 \bw \Big)
   \end{equation}
   for any functions $\phi, \bv \in L^1(\Omega)$ and $f, \bw$ in $L^\infty(\Omega)$. 
   Moreover if $\phi$ and $\bv$ are continuous on $\Omega$,
   then we have
   \begin{equation}
      \label{bc_pb}
      \big(\cB^0 \phi\big) (0,\theta) = \phi(\bx_0),
      \quad
      \big(\hat \bn \times \cB^1 \bv \big)(0,\theta) = 0
      \quad \text{ and } \quad
      \big(\hat \bn \cdot \tilde \cB^1 \bw\big)(0,\theta) = 0 
   \end{equation}
   for all $\theta$, where $\hat \bn = (-1, 0)^T$ is the outward normal vector on the logical boundary $\{s=0\}$.
\end{prop}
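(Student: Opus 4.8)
The plan is to treat the two integral identities as change-of-variables computations and the three pointwise relations as direct evaluations of the pullback formulas at the pole, using the structure of $J_F$ established in Proposition~\ref{prop:singFpol}.

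First I would prove the integral identities. Since $F$ restricts to a $C^1$ diffeomorphism from $\hat\Omega_0$ onto $\overline\Omega_0$ with $\det J_F > 0$ (see \eqref{hatOm0-Om0} and \eqref{detJpos}), and since the pole $\{\bx_0\}$ and the collapsed edge $\{s=0\}$ are Lebesgue-null, the change-of-variables formula gives, for any $g \in L^1(\Omega)$,
\begin{equation*}
   \int_\Omega g\,\rmd\bx = \int_{\hat\Omega}(g\circ F)\det J_F\,\rmd s\,\rmd\theta.
\end{equation*}
Applying this to $g = \phi f$ (which lies in $L^1(\Omega)$ since $\phi\in L^1$ and $f\in L^\infty$) and regrouping the factors according to the definitions \eqref{pb} and \eqref{pb_tilde} yields $(\phi\circ F)\big((f\circ F)\det J_F\big) = (\cB^0\phi)(\tilde\cB^2 f)$, which is the first identity. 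For the vector identity I would apply the same formula to $g=\bv\cdot\bw$ and use the algebraic identity
\begin{equation*}
   (\cB^1\bv)\cdot(\tilde\cB^1\bw) = (\bv\circ F)^{T} J_F\,(\det J_F)\,J_F^{-1}(\bw\circ F) = \det J_F\,(\bv\circ F)\cdot(\bw\circ F),
\end{equation*}
where the cancellation $J_F J_F^{-1} = I$ is the crux. Integrability of the right-hand sides is guaranteed by Propositions~\ref{prop:pb_Lq} and \ref{prop:com_pb_tilde}, which place $\cB^0\phi,\cB^1\bv$ in $L^1(\hat\Omega)$ and $\tilde\cB^2 f,\tilde\cB^1\bw$ in $L^\infty(\hat\Omega)$, so that both products are in $L^1(\hat\Omega)$.

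For the pointwise relations, the first is immediate: $(\cB^0\phi)(0,\theta)=\phi(F(0,\theta))=\phi(\bx_0)$ since $F$ collapses $\{s=0\}$ to $\bx_0$. The other two rest on the single fact that the second column of $J_F$ vanishes at $s=0$, i.e. $\partial_\theta F_1(0,\theta)=\partial_\theta F_2(0,\theta)=0$; this follows from the form \eqref{JF_prop} (equivalently, from differentiating the constant map $\theta\mapsto F(0,\theta)=\bx_0$). Writing out the covariant pullback gives $(\cB^1\bv)_2 = \partial_\theta F_1\,(v_1\circ F)+\partial_\theta F_2\,(v_2\circ F)$, so that $\hat\bn\times\cB^1\bv = -(\cB^1\bv)_2$ vanishes at $s=0$; the continuity of $\bv$ ensures the factors $v_k\circ F$ are finite at the pole. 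Likewise, using $(\det J_F)J_F^{-1}=\bigl(\begin{smallmatrix}\partial_\theta F_2 & -\partial_\theta F_1\\ -\partial_s F_2 & \partial_s F_1\end{smallmatrix}\bigr)$, the first component $(\tilde\cB^1\bw)_1 = \partial_\theta F_2\,(w_1\circ F)-\partial_\theta F_1\,(w_2\circ F)$ vanishes at $s=0$, whence $\hat\bn\cdot\tilde\cB^1\bw = -(\tilde\cB^1\bw)_1 = 0$ there.

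I expect the only genuine difficulty to be the measure-theoretic justification of the change of variables through the singularity: the formula is applied on the punctured domains $\Omega_0$ and $\hat\Omega_0$ where $F$ is a bona fide $C^1$ diffeomorphism, and one must argue that discarding the null sets $\{\bx_0\}$ and $\{s=0\}$ leaves each of the (absolutely convergent) integrals unchanged. Everything else is routine linear algebra and evaluation at $s=0$.
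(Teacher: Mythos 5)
Your proposal is correct and follows essentially the same route as the paper's proof: a change of variables on the punctured domain combined with the cancellation $(\cB^1\bv)\cdot(\tilde\cB^1\bw)=(\bv\circ F)^T J_F\,(\det J_F)\,J_F^{-1}(\bw\circ F)=\det J_F\,(\bv\cdot\bw)\circ F$ (which the paper writes out component-wise in \eqref{hv_hw}), and the vanishing of $\partial_\theta F_1$ and $\partial_\theta F_2$ at $s=0$ from \eqref{JF_prop} for the two boundary relations. Your added care about discarding the null sets $\{\bx_0\}$ and $\{s=0\}$ is a welcome refinement of a step the paper leaves implicit, but it does not change the argument.
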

\smallskip

\begin{proof}
   Both equalities in \eqref{cov} are obtained with a 
   change of variable.
   For the first one this is straightforward, and for the second one
   write
   $\hat \bv = \cB^1 \bv = J_F^{T} (\bv \circ F)$ and 
   $\hat \bw = \tilde \cB^1 \bw = (\det J_F)J_F^{-1} (\bw \circ F)$, 
   that is, 
   \begin{equation} \label{hv_hw}
   \hat \bv 
   = 
   \begin{pmatrix}
    \partial_s F_1 (v_1\circ F)
      + \partial_s F_2 (v_2\circ F)
      \\ 
      \partial_\theta F_1 (v_1\circ F)
        + \partial_\theta F_2 (v_2\circ F)      
   \end{pmatrix}
   \qquad \text{ and } \qquad
   \hat \bw 
   = 
   \begin{pmatrix}
      \partial_\theta F_2 (w_1\circ F)
      - \partial_\theta F_1 (w_2\circ F)
      \\ 
      - \partial_s F_2  (w_1\circ F)
        + \partial_s F_1 (w_2\circ F)      
   \end{pmatrix}
   \end{equation}
   which shows that
   $\hat \bv \cdot \hat \bw = \det J_F ((\bv \cdot \bw) \circ F)$
   holds indeed.
   Using next \eqref{JF_prop}, one also infers from \eqref{hv_hw} that
   $$ 
   \hat \bn \times \hat \bv(0,\theta) 
      = - \hat v_\theta(0,\theta) = 0
      \qquad \text{ and } \qquad 
      \hat \bn \cdot \hat \bw(0,\theta) 
      = - \hat w_s(0,\theta) = 0
   $$
   which proves the second and third equations in \eqref{bc_pb}.
   The first equation is straightforward.
\end{proof}

\subsection{Singular pushforwards of smooth logical functions} 
\label{sec:pf}

We now carefully study the pushforwards of functions which are smooth 
on the logical domain.
Since these pushforwards are not a priori defined at the pole, 
we see the operators \eqref{pf} as 
mappings between measurable functions on the domains 
$\hat \Omega$ and $\Omega$.

\medskip
\begin{prop} \label{prop:cF}
   The pushforward operators satisfy the following properties:
   $$
   \left\{
   \begin{aligned}
      &\text{%
      $\cF^0$ maps continuously
      $L^\infty(\hat \Omega)$ into $L^\infty(\Omega)$
      }
      \\
      &\text{%
      both 
      $\cF^1$ and $\cF^2$ 
      map continuously
      $L^\infty(\hat \Omega)$
      into $L^q(\Omega)$, $1 \le q < 2$}
      \\
      &\text{%
      $\cF^2$ maps continuously
      $L^1(\hat \Omega)$ into $L^1(\Omega)$.}      
   \end{aligned}\right.
   $$
\end{prop}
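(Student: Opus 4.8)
The plan is to reduce every claim to the change of variables $\bx = F(s,\theta)$, which turns an integral over $\Omega$ into one over $\hat\Omega$ weighted by $\det J_F$. Since $F$ is a $C^1$ diffeomorphism between the punctured domains $\hat\Omega_0$ and $\Omega_0$ and the pole is negligible, for any $1 \le q < \infty$ and any measurable $g$ on $\hat\Omega$ one has $\int_\Omega \abs{g\circ F^{-1}}^q \rmd\bx = \int_{\hat\Omega}\abs{g}^q \det J_F \rmd s\rmd\theta$. All three statements then follow by feeding in the pointwise bounds furnished by Definition~\ref{def:singpol}: from \eqref{JF_prop} we have $\det J_F(s,\theta) \le c\,s$, from $sD_* \le \det J_F$ we get $\abs{\det J_F^{-1}(s,\theta)} \le \frac{1}{sD_*}$, and from the explicit form \eqref{JF-T} of $J_F^{-T}$, whose entries are $\cO(1)$ or $\cO(1/s)$, we get $\norm{J_F^{-T}(s,\theta)} \le \frac{c}{s}$ (the estimate being uniform on $\hat\Omega$ since $J_F^{-T}$ is bounded away from the pole).

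The statement for $\cF^0$ is immediate: $\cF^0\hat\phi = \hat\phi\circ F^{-1}$ is a pure change of variable and the pole is negligible, so $\cF^0$ is in fact an isometry from $L^\infty(\hat\Omega)$ onto $L^\infty(\Omega)$. For the $L^\infty \to L^q$ bounds on $\cF^1$ and $\cF^2$, I would combine the change of variables with the two singular bounds above. Writing $\bv = \cF^1\hat\bv$,
\[
   \norm{\bv}_{L^q(\Omega)}^q = \int_{\hat\Omega} \abs{J_F^{-T}\hat\bv}^q \det J_F \rmd s\rmd\theta \le C\norm{\hat\bv}_{L^\infty(\hat\Omega)}^q \int_0^{2\pi}\!\!\int_0^L s^{1-q} \rmd s\rmd\theta,
\]
and the estimate for $\cF^2$ is identical after replacing $\norm{J_F^{-T}}$ by $\abs{\det J_F^{-1}} \le \frac{1}{sD_*}$. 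In both cases the right-hand side is a multiple of $\int_0^L s^{1-q}\rmd s$, which is finite precisely when $q < 2$. This threshold is the only subtle point and is the crux of the statement: the $\cO(1/s)$ blow-up of the pushforward factor, tempered by the $\cO(s)$ degeneracy of $\det J_F$, leaves a net weight $s^{1-q}$ that is integrable up to but not including $q = 2$.

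Finally, for $\cF^2 : L^1(\hat\Omega)\to L^1(\Omega)$ I would exploit an exact cancellation rather than a one-sided bound. With $f = \cF^2\hat f = (\det J_F^{-1}\,\hat f)\circ F^{-1}$, the same change of variables gives
\[
   \norm{f}_{L^1(\Omega)} = \int_{\hat\Omega}\abs{\det J_F^{-1}}\,\abs{\hat f}\,\det J_F \rmd s\rmd\theta = \int_{\hat\Omega}\abs{\hat f}\rmd s\rmd\theta = \norm{\hat f}_{L^1(\hat\Omega)},
\]
since $\det J_F > 0$ forces $\abs{\det J_F^{-1}}\det J_F = 1$ for all $s>0$. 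Thus $\cF^2$ is even an isometry from $L^1(\hat\Omega)$ to $L^1(\Omega)$, consistent with the mass-preserving property noted earlier. No genuine obstacle arises here; the only points requiring care are the justification of the change of variables on the punctured domain and the bookkeeping of the matrix and determinant bounds from Definition~\ref{def:singpol}.
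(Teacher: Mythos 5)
Your proof is correct and follows essentially the same route as the paper: change of variables weighted by $\det J_F$, the trivial $L^\infty$ and $L^1$ preservation for $\cF^0$ and $\cF^2$, and the bound $\det J_F\cdot\abs{J_F^{-T}}^q \lesssim s^{1-q}$ (equivalently $(\det J_F)^{1-q}\lesssim s^{1-q}$ in the paper's writing), integrable exactly for $q<2$.
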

\smallskip

\begin{proof}
   The first and last statement are straightforward since 
   $\cF^0$ and $\cF^2$ preserve the $L^\infty$ and $L^1$ norms, respectively.
   Next, consider $\hat \bv \in L^\infty(\hat \Omega)$ and compute
   $$
   (\cF^1 \hat \bv)(\bx) = (J_F^{-T} \hat \bv)(s,\theta)
   = \frac{1}{\det J_F(s,\theta)}
   \begin{pmatrix}
    (\partial_\theta F_2 \hat v_s
      - \partial_s F_2  \hat v_\theta)(s,\theta)
      \\ 
      (-\partial_\theta F_1 \hat v_s
        + \partial_s F_1 \hat v_\theta)(s,\theta)
   \end{pmatrix}
   \quad \text{where} \quad \bx = F(s,\theta).
   $$
   In particular, for $1 \le q < 2$ the bounds in \eqref{JF_prop}--\eqref{Os} yield
   $$
   \int_\Omega \abs{\cF^1 \hat \bv}^q \rmd \bx= \int_{\hat \Omega} \det J_F \abs{\cF^1 \hat \bv}^q \rmd(s,\theta)
   \le C \norm{\hat \bv}^q_{L^\infty} \int_{\hat \Omega} (\det J_F)^{1-q} \rmd(s,\theta)
   \le C \norm{\hat \bv}^q_{L^\infty} \int_{\hat \Omega} s^{1-q} \rmd(s,\theta) 
   < \infty
   $$
   since $-1 < 1-q$.
   This shows the continuity of $\cF^1: L^\infty(\hat \Omega) \to L^q(\Omega)$,
   and a similar computation shows that of
   $\cF^2: L^\infty(\hat \Omega) \to L^q(\Omega)$.
\end{proof}

We now specify the commutation properties (or lack thereof) of the pushforward operators. 
\medskip
\begin{prop} \label{prop:cF_diff}
   If $\hat \phi \in W^{1,\infty}(\hat \Omega)$, 
   then $\cF^0 \hat \phi \in W^{1,q}(\Omega)$ for $1 \le q < 2$,
   and the commuting relation holds:
   \begin{equation} \label{com_pf_grad_Lip}
      \bgrad \cF^0 \hat \phi = \cF^1 (\hbgrad \hat \phi)
   \end{equation}
   where $\cF^1 (\hbgrad \hat \phi) \in L^q(\Omega)$ according to Proposition~\ref{prop:cF}.
   If $\hat \bv \in W^{1,\infty}(\hat \Omega)$,
   then its pushforward $\cF^1 \hat \bv$ is in $L^{q}(\Omega)$ for $1 \le q < 2$,
   with a distributional curl of the form
   \begin{equation} \label{com_pf_curl_Lip}
      \curl \cF^1 \hat \bv = \cF^2 (\hcurl \hat \bv) + \alpha \delta_{\bx_0}
      \qquad \text{ with } \qquad \alpha = \int_0^{2\pi} \hat v_\theta(0,\theta) \rmd \theta 
   \end{equation}
   where $\cF^2 (\hcurl \hat \bv) \in L^q(\Omega)$ according to Proposition~\ref{prop:cF}, and 
   $\delta_{\bx_0}$ is the Dirac measure at the pole.   
\end{prop}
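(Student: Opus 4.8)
The plan is to establish both commuting relations by the same device. Since $F$ restricts to a $C^1$ diffeomorphism between the punctured domains $\hat\Omega_0$ and $\Omega_0$ (see \eqref{hatOm0-Om0}), the pointwise relations \eqref{com_pf} already hold almost everywhere on $\Omega_0$ — the chain rule being valid for the Lipschitz functions $\cF^0\hat\phi$ and $\cF^1\hat\bv$ composed with a $C^1$ map — while the integrability statements $\cF^0\hat\phi\in W^{1,q}(\Omega)$ and $\cF^1\hat\bv\in L^q(\Omega)$ are exactly those furnished by Proposition~\ref{prop:cF}. The only thing left is to determine the contribution of the pole to the distributional derivatives. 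I would capture it by testing against $C_c^\infty(\Omega)$ functions, integrating by parts on the exhausting family $\Omega_\epsilon \defeq F(\{s>\epsilon\})$, and letting $\epsilon\to 0$. The inner boundary of $\Omega_\epsilon$ is the curve $\Gamma_\epsilon \defeq F(\{s=\epsilon\})$ encircling the pole, so everything reduces to the limit of a single boundary integral over $\Gamma_\epsilon$. On $\overline{\Omega_\epsilon}$ both pushforwards are continuous (since $J_F$ and $J_F^{-1}$ are continuous away from the pole and $\hat\phi,\hat\bv$ are Lipschitz), so the classical Green formulas apply there.

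For the gradient relation \eqref{com_pf_grad_Lip} I take a vector test field $\bs\psi\in C_c^\infty(\Omega)$ and write $-\int_{\Omega_\epsilon}\cF^0\hat\phi\,\div\bs\psi = \int_{\Omega_\epsilon}\bgrad(\cF^0\hat\phi)\cdot\bs\psi - \int_{\Gamma_\epsilon}(\cF^0\hat\phi)(\bs\psi\cdot\bn)\,\rmd S$. Using $\bgrad\cF^0\hat\phi = \cF^1(\hbgrad\hat\phi)$ on $\Omega_0$ together with $\cF^1(\hbgrad\hat\phi)\in L^q(\Omega)$ and $\cF^0\hat\phi\in L^\infty(\Omega)$ from Proposition~\ref{prop:cF}, the two volume integrals converge to $\int_\Omega\cF^1(\hbgrad\hat\phi)\cdot\bs\psi$ and $-\int_\Omega\cF^0\hat\phi\,\div\bs\psi$ respectively (dominated convergence, as $\Omega\setminus\Omega_\epsilon$ has vanishing measure). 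The boundary term is bounded by $\norm{\hat\phi}_{L^\infty}\norm{\bs\psi}_{L^\infty}\abs{\Gamma_\epsilon}$, and $\abs{\Gamma_\epsilon}=\int_0^{2\pi}\abs{\partial_\theta F(\epsilon,\theta)}\,\rmd\theta = \cO(\epsilon)$ because $\partial_\theta F = \cO(s)$ by \eqref{JF_prop}; hence it vanishes, leaving no singular mass and proving \eqref{com_pf_grad_Lip}.

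For the curl relation \eqref{com_pf_curl_Lip} I take a scalar $\psi\in C_c^\infty(\Omega)$ and start from $\langle\curl\cF^1\hat\bv,\psi\rangle = \int_\Omega\cF^1\hat\bv\cdot\bcurl\psi$. Green's formula on $\Omega_\epsilon$ gives $\int_{\Omega_\epsilon}\cF^1\hat\bv\cdot\bcurl\psi = \int_{\Omega_\epsilon}(\curl\cF^1\hat\bv)\psi + \int_{\Gamma_\epsilon}(\cF^1\hat\bv\cdot\bs\tau)\psi\,\rmd S$, where $\bs\tau$ is the unit tangent in the direction of increasing $\theta$ (the sign being fixed by $\det J_F>0$). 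The key identity is that this tangential trace is precisely the $\theta$-component of the pullback: from \eqref{pb} one has $(\cF^1\hat\bv)(F(s,\theta))\cdot\partial_\theta F(s,\theta) = \hat v_\theta(s,\theta)$, so the boundary integral equals $\int_0^{2\pi}\hat v_\theta(\epsilon,\theta)\,\psi(F(\epsilon,\theta))\,\rmd\theta$. Using $\curl\cF^1\hat\bv=\cF^2(\hcurl\hat\bv)\in L^q(\Omega)$ on $\Omega_0$ and $\cF^1\hat\bv\in L^q(\Omega)$, the two volume integrals converge to $\int_\Omega\cF^2(\hcurl\hat\bv)\psi$ and $\int_\Omega\cF^1\hat\bv\cdot\bcurl\psi$. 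Finally, because $\hat\bv$ is Lipschitz (so $\hat v_\theta(\epsilon,\cdot)\to\hat v_\theta(0,\cdot)$ uniformly) and $\psi$ is continuous (so $\psi(F(\epsilon,\cdot))\to\psi(\bx_0)$ uniformly), the boundary integral tends to $\psi(\bx_0)\int_0^{2\pi}\hat v_\theta(0,\theta)\,\rmd\theta = \alpha\,\psi(\bx_0)$, identifying the singular part as $\alpha\,\delta_{\bx_0}$.

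I expect the main obstacle to be the careful bookkeeping of the boundary term on $\Gamma_\epsilon$: fixing its orientation and sign (hence the exact value, not merely $\pm$, of $\alpha$), justifying that $\cF^1\hat\bv$ admits a genuine tangential trace there, and identifying that trace with $\hat v_\theta$ through the pullback. This step also explains the asymmetry between the two cases: the length of $\Gamma_\epsilon$ shrinks like $\epsilon$, which annihilates the boundary term for the bounded function $\cF^0\hat\phi$, whereas for $\cF^1\hat\bv$ the $1/s$ blow-up of the field exactly compensates this shrinking, so the circulation $\int_0^{2\pi}\hat v_\theta(\epsilon,\theta)\,\rmd\theta$ retains an $\cO(1)$ limit and produces the Dirac mass.
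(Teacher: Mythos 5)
Your argument is correct and reaches the right constant $\alpha$, but it follows a genuinely different route from the paper. You excise a shrinking neighbourhood of the pole in the \emph{physical} domain, apply the classical Green formulas on $\Omega_\epsilon = F(\{s>\epsilon\})$, and compute the limit of the boundary integral over $\Gamma_\epsilon$; the identification $\bv\cdot\partial_\theta F = \hat v_\theta$ and the orientation fixed by $\det J_F>0$ then produce the Dirac mass directly, and the $\cO(\epsilon)$ length of $\Gamma_\epsilon$ explains transparently why the gradient case carries no singular part while the curl case does. The paper instead performs a single change of variables to the \emph{logical} annulus, using the duality $\int_\Omega \bv\cdot\bw = \int_{\hat\Omega}(\cB^1\bv)\cdot(\tilde\cB^1\bw)$ and the commutation of the dual pullbacks $\tilde\cB^\ell$ with $\div$ and $\bcurl$ (Propositions~\ref{prop:com_pb_tilde} and~\ref{prop:pb_0}); the integration by parts is then carried out once on the regular domain $\hat\Omega$, and the entire singular contribution is read off from the boundary term at $\{s=0\}$, where $\hat\bn\cdot\tilde\cB^1\bw=0$ kills it in the gradient case and $\hat\bn\times\hat\bv=-\hat v_\theta$ yields $\alpha$ in the curl case. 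The trade-off: the paper's route only ever differentiates the (regular) pullback of the test function, so it never needs the pointwise commutation of the \emph{singular} pushforward away from the pole; your route does rely on $\curl \cF^1\hat\bv = \cF^2(\hcurl\hat\bv)$ a.e.\ on $\Omega_0$, which for a merely Lipschitz $\hat\bv$ and a merely $C^1$ mapping is not a bare chain-rule statement --- the product $J_F^{-T}\hat\bv$ involves a matrix that is only continuous, and one must invoke the cancellation of the second-derivative terms of $F$ in the curl, exactly as in the proof of Proposition~\ref{prop:com_pb}. This is a known fact and is established in the paper for the pullback direction, so it is a citable ingredient rather than a gap, but your proof should state it as such rather than fold it into ``the chain rule''. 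In exchange, your argument is more self-contained at the level of machinery (no dual pullbacks $\tilde\cB^\ell$ needed) and gives a more geometric picture of where $\alpha$ comes from, namely the residual circulation $\int_0^{2\pi}\hat v_\theta(\epsilon,\theta)\,\rmd\theta$ around the collapsing curve $\Gamma_\epsilon$.
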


\smallskip
This result establishes
that the singular pushforwards {\em do not} commute with the differential 
operators for general smooth logical functions. It also provides us with a 
necessary and sufficient condition for this commutation to hold.
(Note that here we use the standard notation for Sobolev spaces, which also involves 
the letter $W$: this should not be confused with the tensor-product spline spaces.)

\begin{proof}
Since $\hat \phi \in W^{1,\infty}(\hat \Omega)$ is bounded, its pushforward 
$\phi = \cF^0 \hat \phi$ is also bounded, hence in $L^q(\Omega)$.
Its gradient is a distribution which satisfies, 
for all $\bw \in C^1_c(\Omega)$ with pullback 
$\hat \bw = \tilde \cB^1 \bw$, 
$$
   \sprod{\bgrad \phi}{\bw} 
   = -\int_\Omega \phi \div \bw 
   = -\int_{\hat \Omega} \hat \phi \hdiv \hat \bw 
   = \int_{\hat \Omega} \hbgrad \hat \phi \cdot \hat \bw 
      - \int_{\partial \hat \Omega} \hat \phi (\hat \bn \cdot \hat \bw) 
   = \int_{\Omega} \cF^1 (\hbgrad \hat \phi) \cdot \bw.
$$
Here, the second equality follows from the change of variable \eqref{cov}
with the commuting relation \eqref{com_pb_div},
the third one is an integration by part
and in the fourth equality we have used again the change of 
variable \eqref{cov} (note that $\cF^1 (\hbgrad \hat \phi) \in L^1(\Omega)$ according to Proposition~\ref{prop:cF}),
and the fact that the boundary term $\hat\bn \cdot \hat \bw$ 
vanishes on both edges $\{s=L\}$ and $\{s=0\}$:
on the former it follows from the compact support of $\bw$ in the open domain $\Omega$,
and on the latter it follows from \eqref{bc_pb}.

We next consider $\hat \bv \in L^\infty(\hat \Omega)$:
its pushforward $\bv = \cF^1 \hat \bv$ is in $L^q(\Omega)$ 
according again to Proposition~\ref{prop:cF},
and its curl is a distribution that satisfies,
for all $g \in C^1_c(\Omega)$ with pullback 
$\hat g = \tilde \cB^0 g$, 
\begin{equation} \label{curl_pf}
   \begin{aligned}
      \sprod{\curl \bv}{g} 
      &= \int_\Omega \bv \cdot \bcurl g
      = \int_{\hat \Omega} \hat \bv \cdot \hbcurl \hat g
      = \int_{\hat \Omega} \hcurl \hat \bv \, \hat g
         - \int_{\partial \hat \Omega} (\hat \bn \times \hat \bv) \, \hat g
      \\
      &= \int_{\Omega} \cF^2(\hcurl \hat \bv )\, g 
         + g(\bx_0) \int_0^{2\pi} \hat v_\theta(0,\theta)\rmd \theta.
   \end{aligned}
\end{equation}
Here the second equality follows from the change of variable \eqref{cov}
with the commuting relation \eqref{com_pb_bcurl}, 
the third one is an integration by part,
and in the last one we have used again the change of 
variable \eqref{cov} (note that 
$\cF^2(\hcurl \hat \bv ) \in L^1(\Omega)$
thanks to Proposition~\ref{prop:cF}). 
As for the boundary term, we have used that $\hat g = 0$ 
on the edge $\{s=L\}$ (for the same reason as above),
while on $\{s=0\}$ one has
$((\hat \bn \times \hat \bv) \, \hat g)(0, \theta) 
   = -\hat v_\theta(0,\theta) g(\bx_0)$.
This shows the relation \eqref{com_pf_curl_Lip}.
\end{proof}

\section{Commutation properties of the conforming-geometric projections}
\label{sec:commutation}


In Sections~\ref{sec:PV} and \ref{sec:PU} we have defined 
{\em conforming projections} $P^\ell_Z: W^\ell_h \to Z^\ell_h$ 
which map arbitrary tensor-product splines,
a priori singular on the polar domain, 
to splines belonging to the underlying spaces
$Z^\ell = V^\ell$ or $U^\ell$.

In this section we study the commuting properties of 
the composed projections \eqref{Pi_Z}, namely
$$
\Pi^\ell_Z \defeq P^\ell_Z \Pi^\ell_W
$$ 
where we remind that $\Pi^\ell_W$ is the {\em geometric projection} introduced in Section~\ref{sec:geoproj}.

Our objective is to complete the proofs of Theorems~\ref{thm:PV} and \ref{thm:PU}. 
We will first establish some properties of the 
geometric projections:

\medskip
\begin{prop} \label{cd_Pi_W}
   The geometric projection operators $\Pi^\ell_W$ defined in section 
   \ref{sec:geoproj} are well-defined on the domain spaces
   \begin{equation} \label{D_Pi_W}      
      D(\Pi^0_W) = H^1(\Omega) \cap C^0(\overline \Omega),
      \qquad 
      D(\Pi^1_W) = H(\curl;\Omega) \cap C^0(\overline \Omega),
      \qquad
      D(\Pi^2_W) = L^2(\Omega)
   \end{equation}
   and the projections of arbitrary continuous $\phi$ and $\bv$, 
   namely $\Pi^0_W \phi = \sum_{ij} \phi_{ij} T^0_{ij}$ 
   and
   $\Pi^1_W \bv = \sum_{ij} v^s_{ij} \pmb{T}^s_{ij} 
   + \sum_{ij} v^\theta_{ij} \pmb{T}^\theta_{ij}$, 
   satisfy
   \begin{equation} \label{pb_0_DPi}
      \phi_{0j} = \phi_{00} 
      \qquad \text{ and } \qquad 
      v^\theta_{0j} = 0
      \qquad \text{ for all $j$}.
   \end{equation}
   Moreover, the following commutation properties hold
   \begin{equation} \label{com_Pi_W}
   \begin{aligned}
      \Pi^1_W \bgrad \phi &= \bgrad \Pi^0_W \phi
      \\
      \Pi^2_W \curl \bv &= \curl \Pi^1_W \bv  
   \end{aligned}
   \end{equation}
   for all $\phi \in C^1(\overline \Omega)$ and all 
   $\bv \in C^0(\overline \Omega)$ 
   such that $\curl \bv \in L^2(\Omega)$.

\end{prop}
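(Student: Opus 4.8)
The plan is to exploit the factorization $\Pi^\ell_W = \cF^\ell\hat\Pi^\ell_W\cB^\ell$ from \eqref{Pi_W} and reduce every assertion to three ingredients already at hand: the mapping properties of the pullbacks (Propositions~\ref{prop:pb_Lq} and~\ref{prop:com_pb}), the logical commuting relations \eqref{com_hPi_W}, and the (possibly non-commuting) pushforward identities of Proposition~\ref{prop:cF_diff}. I would first settle well-definedness by checking that $\cB^\ell$ maps each physical domain space in \eqref{D_Pi_W} into the logical domain $D(\hat\Pi^\ell_W)$ from \eqref{D_hPiW}. For $\ell=0$ this holds since $\cB^0\phi=\phi\circ F$ is continuous (Proposition~\ref{prop:pb_Lq}) and $\hbgrad\cB^0\phi=\cB^1\bgrad\phi\in L^2(\hat\Omega)$ by \eqref{com_pb_grad}; for $\ell=1$ the pullback $\cB^1\bv$ is continuous and $\hcurl\cB^1\bv=\cB^2\curl\bv\in L^2(\hat\Omega)$ by \eqref{com_pb_curl}; the case $\ell=2$ is immediate.

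The structural remark underlying the boundary relations \eqref{pb_0_DPi} is that, by linearity of the pushforward and the definition \eqref{T_bases}, the tensor-product coefficients of $\Pi^\ell_W u=\cF^\ell\hat\Pi^\ell_W\cB^\ell u$ in the physical basis coincide with those of $\hat\Pi^\ell_W\cB^\ell u$ in the logical basis \eqref{hT_bases}. Because the B-splines are interpolatory at the pole, $B_i(0)=\delta_{i,0}$, the collocation system decouples at $s=0$: the coefficients indexed by $i=0$ are determined solely by the logical degrees of freedom supported on the edge $\{s=0\}$. For $\ell=0$ these are the nodal values $\hat\phi(0,\zeta^\theta_j)=\phi(\bx_0)$, which are constant by \eqref{bc_pb}; since angular interpolation reproduces constants I obtain $\phi_{0j}=\phi(\bx_0)=\phi_{00}$. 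For $\ell=1$ the relevant data are the $\theta$-edge integrals $\int_{\theta_j}^{\theta_{j+1}}\hat v_\theta(0,\theta)\rmd\theta$, and these all vanish because $\hat v_\theta(0,\cdot)=-(\hat\bn\times\cB^1\bv)(0,\cdot)=0$ by \eqref{bc_pb}, whence $v^\theta_{0j}=0$.

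For the commuting relations I would telescope the three commutations. For the gradient, the definition of $\Pi^0_W$, the clean pushforward-gradient identity \eqref{com_pf_grad_Lip} applied to the spline $\hat\Pi^0_W\cB^0\phi$, the logical commutation \eqref{com_hPi_W}, and the pullback commutation \eqref{com_pb_grad} give
\[
\bgrad\Pi^0_W\phi
= \cF^1\hbgrad\hat\Pi^0_W\cB^0\phi
= \cF^1\hat\Pi^1_W\hbgrad\cB^0\phi
= \cF^1\hat\Pi^1_W\cB^1\bgrad\phi
= \Pi^1_W\bgrad\phi,
\]
where I must first verify that $\cB^0\phi$ and $\hbgrad\cB^0\phi=\cB^1\bgrad\phi$ lie in $D(\hat\Pi^0_W)$ and $D(\hat\Pi^1_W)$ so that \eqref{com_hPi_W} applies; both follow from $\phi\in C^1(\overline\Omega)$ and Proposition~\ref{prop:com_pb}.

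The same telescoping handles the curl, and this is where the main obstacle lies: the pushforward step now relies on \eqref{com_pf_curl_Lip}, which a priori carries a Dirac term $\alpha\,\delta_{\bx_0}$ with $\alpha=\int_0^{2\pi}\hat w_\theta(0,\theta)\rmd\theta$ for $\hat\bw=\hat\Pi^1_W\cB^1\bv$. Its resolution is precisely the boundary relation established above: applied to $\hat\Pi^1_W\cB^1\bv$ it gives $w^\theta_{0j}=0$, so $\hat w_\theta(0,\cdot)=\sum_j w^\theta_{0j}\per{M}_j\equiv 0$ and hence $\alpha=0$. With the singular term eliminated, chaining the pushforward-curl identity, the logical commutation \eqref{com_hPi_W}, and the pullback commutation \eqref{com_pb_curl} yields $\curl\Pi^1_W\bv=\Pi^2_W\curl\bv$, after checking as before that $\cB^1\bv\in D(\hat\Pi^1_W)$ and $\hcurl\cB^1\bv=\cB^2\curl\bv\in D(\hat\Pi^2_W)=L^2(\hat\Omega)$, which is guaranteed by $\bv\in C^0(\overline\Omega)$ and $\curl\bv\in L^2(\Omega)$. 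Thus the geometric projections commute on the physical polar domain precisely because the interpolation they perform automatically annihilates the tangential trace at the pole that would otherwise produce a spurious Dirac source.
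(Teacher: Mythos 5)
Your proposal is correct and follows essentially the same route as the paper: well-definedness via the pullback mapping properties, the boundary coefficient relations via the interpolatory behaviour of the B-splines at $s=0$ together with \eqref{bc_pb}, and the commutation by telescoping the pullback, logical, and pushforward identities, with the key observation that \eqref{pb_0_DPi} kills the Dirac term in \eqref{com_pf_curl_Lip}. Your treatment of the coefficient relations is somewhat more explicit than the paper's (which only invokes the interpolation properties and $\zeta^s_0=0$), but the underlying argument is identical.
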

\smallskip

The proof will be given in Section~\ref{sec:proof_cd_Pi_W}.
We will next establish some properties of the 
discrete conforming projections:

\medskip
\begin{prop} \label{cd_PZ}
   For $Z = V$ or $U$, the discrete conforming projections 
   $P^\ell_Z: W^\ell_h \to Z^\ell_h$ 
   satisfy the relations
   \begin{equation} \label{comm_prop_P0P1}
      P^{1}_Z \bgrad \phi_h = \bgrad P^0_Z \phi_h
      \qquad \text{ for all } \phi_h \in W^0_h \text{ such that } 
      \phi_{0j} = \phi_{00} \text{ for all $j$}
   \end{equation}
   and 
   \begin{equation} \label{comm_prop_P1P2}
      P^{2}_Z \curl \bv_h = \curl \hat P^1_Z \bv_h
      \qquad \text{ for all } \bv_h \in W^1_h \text{ such that } 
      v^\theta_{ij} = 0 \text{ for all $j$}.
   \end{equation}
\end{prop}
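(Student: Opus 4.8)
The plan is to reduce both identities to purely algebraic relations between tensor-product spline coefficients, and then to verify these on a basis of each hypothesis subspace, the compatibility relation \eqref{P1_gradP0} doing most of the work. First I would check that, on the relevant subspaces, the physical operators act on $W^\ell_h$ exactly as the pushforwards of the logical ones. For the gradient this holds unconditionally: every tensor-product spline lies in $W^{1,\infty}(\hat\Omega)$, so Proposition~\ref{prop:cF_diff} gives $\bgrad\phi_h=\cF^1\hbgrad\cB^0\phi_h\in W^1_h$ with no Dirac term, and likewise $\bgrad P^0_Z\phi_h=\cF^1\hbgrad\cB^0P^0_Z\phi_h$ since $P^0_Z\phi_h\in Z^0_h\subset H^1(\Omega)$. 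For the curl the Dirac coefficient in \eqref{com_pf_curl_Lip} is $\alpha=\int_0^{2\pi}\hat v_\theta(0,\theta)\rmd\theta$ with $\hat v_\theta(0,\theta)=\sum_j v^\theta_{0j}\per{M}_j(\theta)$, which vanishes identically under the hypothesis $v^\theta_{0j}=0$; only then is $\curl\bv_h=\cF^2\hcurl\cB^1\bv_h\in W^2_h$, so that $P^2_Z$ may legitimately be applied (here I read the right-hand side of \eqref{comm_prop_P1P2} as $\curl P^1_Z\bv_h$). Since the pushforward bases are linearly independent, each identity then becomes an equality of coefficient vectors, which it suffices to check on a basis of the subspace in the hypothesis.

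Next I would record the action of the operators on the pushforward basis, obtained from \eqref{hgrad_basis}--\eqref{hcurl_basis} through $\cF^\ell$ (open-knot convention),
\[
\bgrad T^0_{ij}=\bT^s_{(i-1)j}-\bT^s_{ij}+\bT^\theta_{i(j-1)}-\bT^\theta_{ij},\qquad \curl\bT^s_{ij}=T^2_{ij}-T^2_{i(j-1)},\qquad \curl\bT^\theta_{ij}=T^2_{(i-1)j}-T^2_{ij},
\]
with the conventions $\bT^s_{-1,j}=0$ and $T^2_{-1,j}=0$, together with the reductions $P^1_Z\bT^s_{ij}=\bT^s_{ij}$ ($i\ge1$), $P^1_Z\bT^\theta_{0j}=P^1_Z\bT^\theta_{1j}=0$, $P^2_ZT^2_{0j}=T^2_{1j}$, $P^2_ZT^2_{ij}=T^2_{ij}$ ($i\ge1$) read off from \eqref{eq:PV1_def}--\eqref{eq:PV2_def} and \eqref{eq:PU1_def} (recall $P^2_U=P^2_V$), and the compatibility relation \eqref{P1_gradP0}. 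As bases I take $\{\,e\defeq\sum_jT^0_{0j}\,\}\cup\{T^0_{ij}:i\ge1\}$ for $V^0_h$, and $\{\bT^s_{ij}\}\cup\{\bT^\theta_{ij}:i\ge1\}$ for the subspace $\{v^\theta_{0j}=0\}$.

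For \eqref{comm_prop_P0P1}, on $T^0_{ij}$ with $i\ge2$ both sides equal $\bgrad T^0_{ij}$, all coefficients involved carrying a first index $\ge1$ on which $P^0_Z,P^1_Z$ act as the identity. On $T^0_{1j}$ I expand $\bgrad T^0_{1j}=\bT^s_{0j}-\bT^s_{1j}+\bT^\theta_{1(j-1)}-\bT^\theta_{1j}$, apply $P^1_Z$ using $P^1_Z\bT^\theta_{1k}=0$ and $P^1_Z\bT^s_{1j}=\bT^s_{1j}$, and invoke \eqref{P1_gradP0} to get $P^1_Z\bgrad T^0_{1j}=P^1_Z\bT^s_{0j}-\bT^s_{1j}=\bgrad P^0_ZT^0_{1j}$. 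For $e$ the periodic $\theta$-differences telescope to $\bgrad e=-\sum_j\bT^s_{0j}$; applying $P^1_Z$ termwise through \eqref{P1_gradP0} and using that $\sum_j(T^0_{0j}+T^0_{1j})$ is fixed by $P^0_Z$ (identically for $Z=V$ and $Z=U$, the ring-mixing contribution being killed by $\sum_k\cos(\theta_j-\theta_k)=0$) yields $P^1_Z\bgrad e=\bgrad P^0_Ze$. For \eqref{comm_prop_P1P2}, the relation \eqref{P1_gradP0} combined with $\curl\bgrad=0$ gives $\curl P^1_Z\bT^s_{0j}=\curl\bT^s_{1j}=T^2_{1j}-T^2_{1(j-1)}$, matching $P^2_Z\curl\bT^s_{0j}=P^2_Z(T^2_{0j}-T^2_{0(j-1)})$ via $P^2_ZT^2_{0k}=T^2_{1k}$; the elements $\bT^s_{ij}$ ($i\ge1$), $\bT^\theta_{ij}$ ($i\ge2$) are immediate since $P^1_Z$ and $P^2_Z$ act as the identity on the indices produced, and $\bT^\theta_{1j}$ gives $\curl P^1_Z\bT^\theta_{1j}=0=P^2_Z(T^2_{0j}-T^2_{1j})$.

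The main obstacle is the correct treatment of the pole rather than the bookkeeping. Unlike in the regular broken-FEEC setting, the physical $\curl$ of a non-conforming spline carries a Dirac mass $\alpha\,\delta_{\bx_0}$, so the left-hand side of \eqref{comm_prop_P1P2} is only well defined once the hypothesis $v^\theta_{0j}=0$ forces $\alpha=0$; pinning this down via Proposition~\ref{prop:cF_diff} is the conceptual crux that makes the coefficient computations meaningful, and it is also what explains the shape of the hypotheses. The second delicate point is that both reductions funnel through \eqref{P1_gradP0}, whose validity for the $C^1$ sequence rests on the discrete-Fourier (Toeplitz) blocks $\bbol{p}$ entering $P^0_U$ and $P^1_U$: one must verify that the sums $\tfrac{2}{n_\theta}\sum_k\cos(\theta_j-\theta_k)$ reproduce exactly the ring coupling prescribed by \eqref{eq:UlCharacterization}, so that $P^1_U\bT^s_{0j}-\bT^s_{1j}$ is indeed a discrete gradient and hence curl-free.
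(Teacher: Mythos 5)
Your proposal is correct and follows essentially the same route as the paper: both reduce the identities to coefficient algebra via Proposition~\ref{prop:cF_diff} (with the hypothesis $v^\theta_{0j}=0$ killing the Dirac term so that $\curl$ is the pushforward of $\hcurl$), and then verify them on the near-pole basis functions using the compatibility relation \eqref{P1_gradP0}. The only substantive differences are cosmetic — the paper shortcuts the $Z=V$ gradient case by noting the hypothesis already places $\phi_h$ in $V^0_h$ and $\bgrad\phi_h$ in $V^1_h$, whereas you check the basis uniformly for both $Z$ — and your sign for the $\bT^\theta$ terms in $\bgrad T^0_{ij}$ is the consistent one (the paper's \eqref{grad_T0} carries a sign slip that is harmless since those terms are annihilated by $P^1_Z$ or telescoped).
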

\smallskip

The proof will be given in Section~\ref{sec:proof_cd_PZ}.
The desired commutation properties of the composed 
conforming-geometric projections follow as a direct corollary:
this completes the proofs of Theorems~\ref{thm:PV} and \ref{thm:PU}.

\medskip

\begin{cor}
   For $Z = V$ or $U$, 
   the commuting relations 
   \begin{equation} \label{com_Pi_Z}
      \begin{aligned}
         \Pi^1_Z \bgrad \phi &= \bgrad \Pi^0_Z \phi
         \\
         \Pi^2_Z \curl \bv &= \curl \Pi^1_Z \bv  
      \end{aligned}
   \end{equation}   
   hold for all $\phi \in C^1(\overline \Omega)$ and all 
   $\bv \in C^0(\overline \Omega)$ 
   such that $\curl \bv \in L^1(\Omega)$.
\end{cor}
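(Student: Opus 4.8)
The plan is to derive the Corollary as a direct composition of Propositions~\ref{cd_Pi_W} and~\ref{cd_PZ}, using the definition $\Pi^\ell_Z = P^\ell_Z\Pi^\ell_W$ from \eqref{Pi_Z}. The geometric projections supply both a commutation past $\Pi^\ell_W$ and the coefficient normalisations \eqref{pb_0_DPi}, and these normalisations are precisely the hypotheses under which the discrete conforming projections commute in Proposition~\ref{cd_PZ}. For the gradient relation, take $\phi\in C^1(\overline\Omega)$; then $\phi\in D(\Pi^0_W)$ and $\bgrad\phi\in C^0(\overline\Omega)\cap H(\curl;\Omega)=D(\Pi^1_W)$ since $\curl\bgrad\phi=0$, so both sides of the first line of \eqref{com_Pi_Z} are well-defined. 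I would then chain
\begin{equation*}
\Pi^1_Z\bgrad\phi = P^1_Z\Pi^1_W\bgrad\phi = P^1_Z\bgrad\Pi^0_W\phi = \bgrad P^0_Z\Pi^0_W\phi = \bgrad\Pi^0_Z\phi,
\end{equation*}
where the second equality is the gradient commutation \eqref{com_Pi_W} and the third is \eqref{comm_prop_P0P1}. The latter applies because its hypothesis, namely that the coefficients of $\phi_h:=\Pi^0_W\phi$ satisfy $\phi_{0j}=\phi_{00}$ for all $j$, is exactly the first identity in \eqref{pb_0_DPi}.

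The curl relation is obtained identically. For $\bv\in C^0(\overline\Omega)$ the projection $\Pi^1_W\bv$ is determined solely by the tangential edge integrals of $\bv$, hence is well-defined on all of $C^0(\overline\Omega)$. I would write
\begin{equation*}
\Pi^2_Z\curl\bv = P^2_Z\Pi^2_W\curl\bv = P^2_Z\curl\Pi^1_W\bv = \curl P^1_Z\Pi^1_W\bv = \curl\Pi^1_Z\bv,
\end{equation*}
the second equality being the curl commutation in \eqref{com_Pi_W} and the third being \eqref{comm_prop_P1P2} (reading its right-hand side as the physical conforming projection $P^1_Z$, which acts identically on spline coefficients). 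The hypothesis of \eqref{comm_prop_P1P2}, namely $v^\theta_{0j}=0$ for the coefficients of $\bv_h:=\Pi^1_W\bv$, is the second identity in \eqref{pb_0_DPi}. It is worth noting that this same vanishing is exactly what cancels the Dirac term $\alpha\delta_{\bx_0}$ of Proposition~\ref{prop:cF_diff}, since $\alpha=\int_0^{2\pi}\hat v_\theta(0,\theta)\,\rmd\theta=0$; this is the conceptual reason the two propositions dovetail, and it guarantees that $\curl\Pi^1_W\bv$ has no singular part at the pole.

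The \emph{main obstacle} is the middle equality of the curl chain under the weaker assumption $\curl\bv\in L^1(\Omega)$ stated in the Corollary, whereas Proposition~\ref{cd_Pi_W} establishes $\Pi^2_W\curl\bv=\curl\Pi^1_W\bv$ only for $\curl\bv\in L^2(\Omega)$. I would bridge this gap through the De Rham/Stokes mechanism underlying $\hat\Pi^2_W$: its cell-integral degree of freedom applied to $\cB^2\curl\bv$ equals, by the change of variable \eqref{cov} and Stokes' theorem, a circulation of $\bv$ around the cell boundary, a quantity that is finite for $\bv\in C^0(\overline\Omega)$ and depends continuously on the data as soon as $\curl\bv\in L^1(\Omega)$. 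Combined with the continuity of $\cB^2$ and $\cF^2$ on $L^1$ (Propositions~\ref{prop:pb_Lq} and~\ref{prop:cF}), this shows that $\Pi^2_W\curl\bv$ is well-defined and that the commutation persists for $\curl\bv\in L^1(\Omega)$, so that the chain above remains valid under the precise hypotheses of the Corollary.
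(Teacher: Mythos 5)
Your proof is correct and follows exactly the paper's route: the paper simply asserts that the corollary ``follows as a direct corollary'' of Propositions~\ref{cd_Pi_W} and \ref{cd_PZ}, which is precisely the composition you spell out, with the hypotheses of Proposition~\ref{cd_PZ} discharged via the coefficient identities \eqref{pb_0_DPi}. Your additional remark on the mismatch between the $L^1$ hypothesis in the corollary and the $L^2$ hypothesis in Proposition~\ref{cd_Pi_W} flags a gap the paper leaves unaddressed, and your Stokes/cell-integral argument is the right way to close it.
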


\subsection{Properties of the polar geometric projections}
\label{sec:proof_cd_Pi_W} 

In this section we establish Proposition~\ref{cd_Pi_W}.
We first verify (using Proposition~\ref{prop:com_pb} and \ref{prop:pb_0})
that the pullbacks \eqref{pb} satisfy
$\cB^\ell(D^\ell(\Pi^\ell_W)) \subset D^\ell(\hat \Pi^\ell_W)$,
see \eqref{D_hPiW}, so that the operators $\Pi^\ell_W$
are indeed well-defined on the domains \eqref{D_Pi_W}.
By composition with the discrete projections, the projections 
$\Pi^\ell_Z$ are also well-defined.
The relations \eqref{pb_0_DPi} on the coefficients
then follow from the interpolation properties of the 
tensor-product projections $\hat \Pi^0_W$ and $\hat \Pi^1_W$, 
and the fact that the first node along $s$ is $\zeta^s_0 = 0$.

Turning to the commutation properties, we use the definition \eqref{Pi_W}
of the geometric projections and compute
\begin{equation*}
   \begin{aligned}
      \Pi^1_W \bgrad \phi 
      &= \cF^1 \hat \Pi^1_W \cB^1 \bgrad \phi
      = \cF^1 \hat \Pi^1_W \hbgrad \hat \phi
      \\
      &= \cF^1 \hbgrad \hat \Pi^0_W \hat \phi 
      = \bgrad \cF^0 \hat \Pi^0_W \hat \phi 
      = \bgrad \Pi^0_W \phi 
   \end{aligned}
\end{equation*}
where the second equality follows from Proposition~\ref{prop:com_pb}, 
the third one uses the commutation properties of the geometric projections on the 
logical tensor-product grid \eqref{com_hPi_W}, 
and the fourth one follows from Proposition~\ref{prop:cF_diff}, using that 
functions in $\hat W^0$ are always Lipschitz continuous. The last equality is the definition of $\Pi^0_W$.
To show the second relation of \eqref{com_Pi_W}, that is the commutation with the curl, some care is needed since the pushforward operators only commute with 
the curl when the logical functions are normal to the singular edge $\{s=0\}$, 
as seen in \eqref{com_pf_curl_Lip}. 
Here this property holds for $\hat \Pi^1_W \hat \bv$ as a consequence 
of \eqref{pb_0_DPi}, 
so that \eqref{com_pf_curl_Lip} yields indeed 
$\curl \cF^1 \hat \Pi^1_W \hat \bv = \cF^2 \hcurl \hat \Pi^1_W \hat \bv$.
It follows that we may proceed similarly as above and compute
\begin{equation*}
   \begin{aligned}
      \Pi^2_W \curl \bv 
      = \cF^2 \hat \Pi^2_W \cB^2 \curl \bv
      = \cF^2 \hat \Pi^2_W \hcurl \hat \bv
      = \cF^2 \hcurl \hat \Pi^1_W \hat \bv
      = \curl \cF^1 \hat \Pi^1_W \hat \bv 
      = \curl \Pi^1_W \bv
   \end{aligned}
\end{equation*}
where the second equality follows from Proposition~\ref{prop:com_pb},
the third one uses the commutation properties of the 
geometric projections on the logical tensor-product grid \eqref{com_hPi_W}, 
and the fourth one follows from Proposition~\ref{prop:cF_diff} and the observation above.
The last equality is the definition of $\Pi^1_W$.

\subsection{Commutation properties of the conforming projections}
\label{sec:proof_cd_PZ}.

In this section we prove Proposition~\ref{cd_PZ}.
For this purpose we first convert the gradient and curl formulas 
in logical variables, namely \eqref{hgrad_basis}
and \eqref{hcurl_basis}, in physical variables.
We do so by invoking Proposition~\ref{prop:cF_diff} 
which allows us to write that 
the pushforward $\phi_h = \cF^0 \hat \phi_h$
of any $\hat \phi_h \in \hat W^0_h$ satisfies 
\begin{equation} \label{cd_grad_W0h}
\phi_h \in L^{1}(\Omega)
\quad \text{and} \quad
\bgrad \phi_h = \cF^1(\hbgrad \hat \phi_h) \in L^1(\Omega),
\end{equation}
and that the pushforward $\bv_h = \cF^1 \hat \bv_h$ of any $\hat \bv_h \in \hat W^1_h$
with $v^\theta_{0j} = 0$ satisfy
\begin{equation} \label{cd_curl_W0h}
\bv_h \in L^{1}(\Omega)
\quad \text{and} \quad
\curl \bv_h = \cF^2(\hcurl \hat \bv_h) \in L^1(\Omega)
\end{equation}
Thus, we have
\begin{equation}
   \label{grad_T0}
   \bgrad T^0_{ij} = \pmb{T}^s_{(i-1)j} - \pmb{T}^s_{ij} 
   - (\pmb{T}^\theta_{i(j-1)} - \pmb{T}^\theta_{ij})
\end{equation}
(where we have $\pmb{T}^s_{(-1)j}=0$ by convention) and 
\begin{equation}
   \label{curl_T1}
   \curl \pmb{T}^s_{ij} = -(T^2_{i(j-1)} - T^2_{ij})
   \qquad \text{ and } \qquad
   \curl \pmb{T}^\theta_{ij} = T^2_{(i-1)j} - T^2_{ij} 
   \qquad \text{ for } i \ge 1.
\end{equation}
We may now proceed with the proof.

\begin{proof}[Proof of Proposition~\ref{cd_PZ} for $Z=V$]
We begin by observing that if $\phi_h \in W^0_h$ is such that $\phi_{0j} = \phi_{00}$ for all $j$,
then $\phi_h \in V^0_h$ and $\bgrad \phi_h \in W^1_h \cap H(\curl;\Omega) = V^1_h$, so that
the projection properties of $P^\ell_V$ readily yield
$$
\bgrad P^0_V \phi_h = \bgrad \phi_h = P^1_V \bgrad \phi_h
$$
which proves \eqref{comm_prop_P0P1} in the case where $Z=V$.

Turning to the second relation \eqref{comm_prop_P1P2}, 
we use the definitions of $P^1_V$ and $P^2_V$ in \eqref{eq:PV1_def} and \eqref{eq:PV2_def}, as well as the 
relations \eqref{curl_T1} above, to compute:

For $\bv_h = \pmb{T}^s_{0j}$, 
\begin{multline*}
   \curl P^1_V \pmb{T}^s_{0j} 
   = \curl (\pmb{T}_{0j}^s + \pmb{T}_{1(j-1)}^\theta - \pmb{T}_{1j}^\theta)
   \\
   =  -(T^2_{0(j-1)} - T^2_{0j}) + (T^2_{0(j-1)} - T^2_{1(j-1)}) - (T^2_{0j} - T^2_{1j})
   = T^2_{1j} - T^2_{1(j-1)}
\end{multline*}
and 
$$
P^2_V \curl \pmb{T}^s_{0j} = - P^2_V (T^2_{0(j-1)} - T^2_{0j}) = T^2_{1j} - T^2_{1(j-1)} = \curl P^1_V \pmb{T}^s_{0j}. 
$$
For $\bv_h = \pmb{T}^\theta_{1j}$, we have 
$$
\curl P^1_V \pmb{T}^\theta_{1j} = \curl 0 = 0
$$
and
$$
P^2_V \curl \pmb{T}^\theta_{1j} = P^2_V (T^2_{0j} - T^2_{1j}) = T^2_{1j} - T^2_{1j} = 0 = \curl P^1_V \pmb{T}^\theta_{1j}.
$$
Since the other basis functions (namely $\bv_h = \pmb{T}^s_{ij}$ with $i \ge 1$ 
and $\bv_h = \pmb{T}^\theta_{ij}$ with $i \ge 2$)
belong to $V^1_h$, their curl belong to $V^2_h$ and 
hence they satisfy $\curl P^1_h \bv_h = \curl \bv_h = P^2_V \curl \bv_h$.
\end{proof}

\begin{proof}[Proof of Proposition~\ref{cd_PZ} for $Z=U$]
We list the (basis) functions in $W^0_h$ which satisfy the constraint $\phi_{0j} = \phi_{00}$ for all $j$
but do not belong to $U^0_h$, and for each of them 
we use the definitions of $P^0_U$ and $P^1_U$ in \eqref{eq:PU0_def} and \eqref{eq:PU1_def}, 
as well as the relations \eqref{grad_T0} above.

\begin{itemize}
   \item 
   The first one is the linear combination $\phi_h = \sum_j T^0_{0j}$. 
   We note that
   $$
   \bgrad \sum_j T^0_{ij} = \sum_j \big( \pmb{T}^s_{(i-1)j} - \pmb{T}^s_{ij}
   - (\pmb{T}^\theta_{i(j-1)} - \pmb{T}^\theta_{ij})\big)
   = \sum_j (\pmb{T}^s_{(i-1)j} - \pmb{T}^s_{ij}),
   $$
   so that  
   $$
   \bgrad P^0_U \phi_h =  \bgrad \Big(\sum_j (T^0_{0j} + T^0_{1j}) \Big)
   = \sum_j (- \pmb{T}^s_{0j} + (\pmb{T}^s_{0j} - \pmb{T}^s_{1j}))
   = \sum_j (- \pmb{T}^s_{1j})
   $$
   and 
   $$
   P^1_U \bgrad \phi_h = P^1_U \Big(\sum_j (- \pmb{T}^s_{0j})\Big) 
   =
   -\sum_j \pmb{T}_{1j}^s + \pmb{R} 
   $$
   hold with a remainder
   $$
   \pmb{R} = \frac{2}{n_\theta} \sum_{j,k=0}^{n_\theta-1}
   \cos(\theta_k-\theta_j) \big(
   \pmb{T}_{0k}^s - \pmb{T}_{1k}^s
   + \pmb{T}_{1(k-1)}^\theta - \pmb{T}_{1k}^\theta
   \big) = 0.
   $$
   Here we have used that 
   $\sum_j \cos(\theta_k-\theta_j) = \sum_j \cos(\theta_{k-j}) = 0$ 
   which follows from the regular angles and the discrete trigonometric relations \eqref{eq:trigo}.

   \item The second one is $\phi_h = T^0_{1j}$, for which we compute
   $$
   P_U^1 \bgrad T_{1j}^0 
   = P_U^1 \big( \pmb{T}^s_{0j} - \pmb{T}^s_{1j}
   - (\pmb{T}^\theta_{1(j-1)} - \pmb{T}^\theta_{1j})\big)
   = P_U^1 \pmb{T}^s_{0j} - \pmb{T}^s_{1j}
   = \bgrad P_U^0 T^0_{1j}
   $$
   where the last equality follows from \eqref{P1_gradP0}.
\end{itemize}
This shows \eqref{comm_prop_P0P1} in the case where $Z=U$.

Turning to the second relation \eqref{comm_prop_P1P2}, 
we use the definitions of $P^1_U$ and $P^2_U = P^2_V$ in \eqref{eq:PU1_def} and \eqref{eq:PV2_def}, as well as the 
relations \eqref{curl_T1} above. 
Again, we list the (basis) functions $\bv_h \in W^1_h$ which satisfy the constraint $v^\theta_{0j} = 0$ 
but do not belong to $U^1_h$.

\begin{itemize}
   \item The first one is $\bv_h = T^s_{0j}$, for which we compute 
   (using again \eqref{P1_gradP0})
   $$
   \curl P^1_U \pmb{T}^s_{0j} = \curl \big(\pmb{T}^s_{1j} + \bgrad P_U^0 T^0_{1j}\big)
   = \curl \pmb{T}^s_{1j}
   = -(T^2_{1(j-1)} - T^2_{1j})
   $$
   and 
   $$
   P^2_U \curl \pmb{T}^s_{0j} 
   = - P^2_U (T^2_{0(j-1)} - T^2_{0j})
   = - (T^2_{1(j-1)} - T^2_{1j}) = \curl P^1_U \pmb{T}^s_{0j}.
   $$
   
   \item The second one is $\bv_h = \pmb{T}_{1j}^\theta$, on which 
   $P_U^1$ and $P_V^1$ coincide (both vanish). 
   Since $P_U^2 = P_V^2$ by construction,
   the relation 
   $\curl P_U^1 \pmb{T}_{1j}^\theta = P_U^2 \curl \pmb{T}_{1j}^\theta$
   then follows from the case $Z=V$.

\end{itemize}
This shows \eqref{comm_prop_P1P2} in the case where $Z=U$ and completes the proof.

\end{proof}

\section{Numerical illustrations}
\label{sec:num}

In this section we use the polar projections proposed above to solve 
two problems with the broken-FEEC method described in Section~\ref{sec:conga}.


Here the domain $\Omega = \Omega_h$ is an approximated disk 
associated to a spline mapping of the form \eqref{F_spol}.
Specifically, we first consider a disk domain 
\begin{equation}
   \label{disk}
   \Omega^{\rm disk} \defeq \{\bx \in \RR^2 : \norm{\bx}_{2} \le 1\} = F_D(\hat \Omega)
\end{equation}
parametrized by a mapping of the form
\begin{equation} \label{F_D}
   F_D: \begin{pmatrix}
       s \\ \theta 
   \end{pmatrix}
   \to
   \bx_0 
   -
   D
   \begin{pmatrix}
     s^2 \\ 0
  \end{pmatrix}
   + 
   \begin{pmatrix}
       s  \cos \theta \\ s \sin \theta 
   \end{pmatrix}
\quad \text{ with } \quad 
\bx_0 = \begin{pmatrix} D \\ 0 \end{pmatrix},
\end{equation}
where we remind that $\widehat{\Omega} = [0,1] \times (\RR / 2\pi\ZZ)$
and $D \in (-\frac 12, \frac 12)$ corresponds
to a shift of the pole $\bx_0$ relative to the disk center ($\bx_C = \bs{0}$). 
Given a tensor-product spline space $\hat W^0_h = \SS_{p,p}(\hat\Omega)$ of 
degree $p \ge 2$ and ($N_s, N_\theta$) uniform cells 
along the $s$ and $\theta$ directions respectively,
we define a spline mapping $F_{D,h}$ of the form \eqref{F_spol} by 
interpolating $F_D$ on the associated Greville points.
Following \eqref{F_surj}, this defines a computational domain 
$$
\Omega_h = F_{D,h}(\hat \Omega)
$$
where the (multi-) index $h \defeq (p, N_s, N_\theta)$ 
highlights the fact that this domain depends on the discretization parameters.
Note that since open and periodic knots are used along the respective 
$s$ and $\theta$ axes, the dimensions of the univariate spline 
spaces are $n_s = N_s + p$ and $n_\theta = N_\theta$.

\subsection{Poisson problem}
   
We first apply our broken-FEEC method to the Poisson problem 
\begin{equation}\label{eq:PoissonPbm}
\left\{
\begin{array}{rl}
-\Delta \phi = f & \text{in } \Omega^{\rm disk} \\
\phi = 0 & \text{on }\partial\Omega^{\rm disk},
\end{array}
\right.
\end{equation}
with solution and source term defined as
\begin{equation}\label{eq:PoissonSol}
\phi(\bx) = \sin(7 \pi (1 - x^2 - y^2)/2)
\quad \text{ and } \quad 
f = -\Delta \phi.
\end{equation}   

In matrix form, the approximation \eqref{eq:PoissonConga} reads
\begin{equation}\label{eq:PoissonMat}
\big((\mat{G}\mat{P}^0)^T \mat{M}^1 \mat{G} \mat{P}^0 
+ \alpha (\mat{I}^0 - \mat{P}^0)^T \mat{M}^0 (\mat{I}^0 - \mat{P}^0) \big) \arr{\phi} 
= (\mat{P}^0)^T \arr{f}
\end{equation}
where the array 
$\arr{\phi}$ 
contains the spline coefficients of the solution $\phi_h \in W^0_h$, and 
$\arr{f}$ contains the moments of the source, namely
$\arr{\phi}_\mu = \sigma^0_\mu(\phi_h)$ and 
$\arr{f}_\mu = \sprod{f}{\Lambda^0_\mu}$ for $\mu \in \cI^0_h$.


\medskip 
\begin{oss}
   We observe that Equation \eqref{eq:PoissonMat} 
   involves the usual mass matrix $\mat{M}^1$ of $W^1_h$, although it contains
   unbounded values as pointed out in Section~\ref{sec:regmass}.
   The reason for this choice is that it makes actually no difference
   whether one uses $\mat{M}^1$ or the regularized (bounded and invertible)
   matrix $\tmat{M}^1$ in \eqref{eq:PoissonMat}, 
   indeed one has
   $$
   (\mat{G}\mat{P}^0)^T \tmat{M}^1 \mat{G} \mat{P}^0 
   = (\mat{G}\mat{P}^0)^T \mat{M}^1 \mat{G} \mat{P}^0.
   $$
   where we remind that $\tmat{M}^1$ is defined in \eqref{tM}. 
   This follows from the relations
   $$
   \mat{G}\mat{P}^0 = (\mat{G}\mat{P}^0)\mat{P}^0 = \mat{P}^1\mat{G}\mat{P}^0
   \quad \text{ and } \quad 
   (\mat{P}^1)^T \tmat{M}^1 \mat{P}^1 = (\mat{P}^1)^T \mat{M}^1 \mat{P}^1,
   $$
   where the first equality uses the projection and commutation properties of $\mat{P}^0$, while the second one uses \eqref{tM}, and that $\mat{P}^1$ is a projection.
   Note that this observation can also be done at the level of 
   Equation \eqref{eq:PoissonConga}: since $\bgrad P^0_Z$
   always maps into an $L^2$ subspace of $W^1_h$, 
   the usual $L^2$ product may be used for the stiffness matrix.
\end{oss}
   
\medskip
In Figure~\ref{fig:PoissonSol} we plot the approximate solutions $\phi_h$
obtained by our $C^1$ broken-FEEC method, and the associated 
errors $\phi-\phi_h$ on their respective domains $\Omega_h$, 
for a fixed degree $p = 2$ and increasing resolutions.
We note that our assumption \eqref{ntheta} on the number of angular knots is not satisfied for the coarser case 
on the left (where $n_\theta = 3$), in particular several properties of our conforming projections may not hold for 
this coarse spline grid, nevertheless our scheme still computes a approximate solution that can be considered as reasonable
given the low resolution. 
More importantly, we observe that both the domains and the solutions seem to converge towards the exact ones as the resolution increases.
\begin{figure}[h]
   \centering
   \subfloat{
   \includegraphics[width = .28\textwidth]{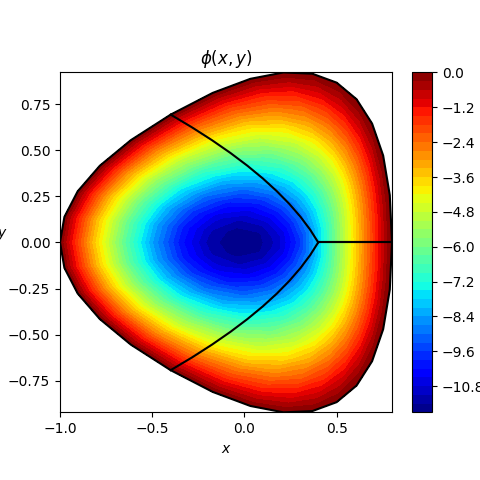}}
   \subfloat{
   \includegraphics[width = .28\textwidth]{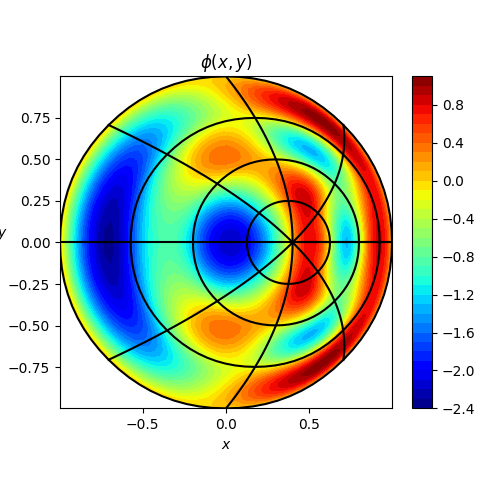}}
   \subfloat{
   \includegraphics[width = .28\textwidth]{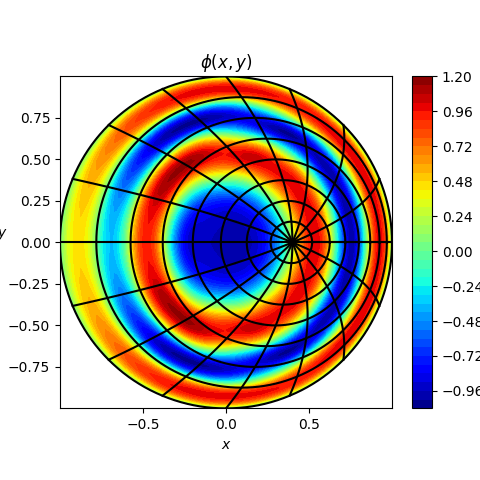}}
   \\[-10pt]
   \subfloat{
   \includegraphics[width = .28\textwidth]{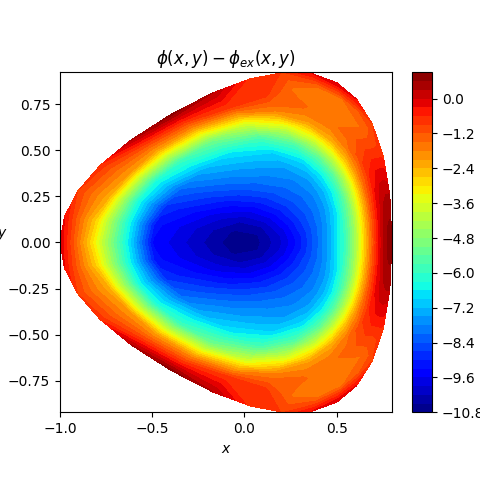}}
   \subfloat{
   \includegraphics[width = .28\textwidth]{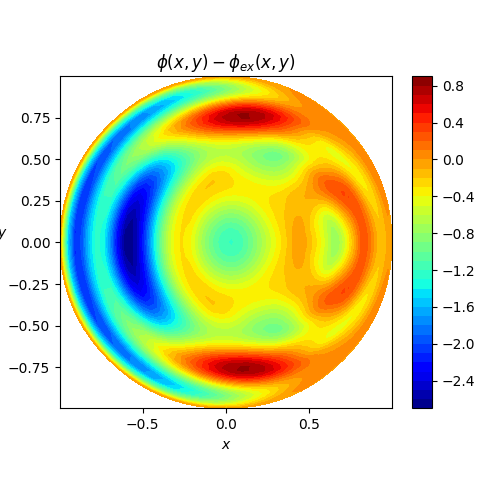}}
   \subfloat{
   \includegraphics[width = .28\textwidth]{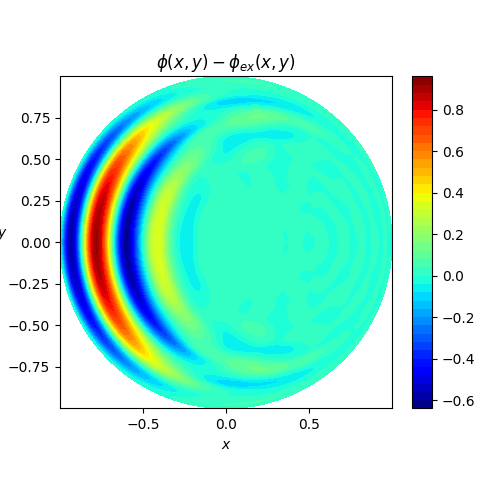}}
   \caption{Poisson problem: approximate solutions $\phi_h$ (top) and 
   errors $\phi_h-\phi$ (bottom) on the discrete domains $\Omega_h$. 
   The mapped cells defined by the spline breaking points are shown 
   as solid lines on the top plots, clearly showing the pole 
   $\bx_0 = (D, 0)$. 
   }\label{fig:PoissonSol}
\end{figure} 
   
To further assess the accuracy of the approximations 
we next perform a numerical convergence analysis,
where we compare three different methods:
\begin{itemize}
   \item the first one is a conforming discretization, where we 
   approximate the Poisson problem by a standard Galerkin projection 
   in the $C^1$ space $U^0_h$, using the polar spline basis
   introduced in \cite{deepesh1}

   \item the second one is a $C^0$ broken-FEEC discretization,
   where the problem is solved in the non-conforming 
   space $W^0_h$ and the $C^0$ projection $P^0_V$ is used 
   to handle the polar singularity

   \item the third one is a $C^1$ broken-FEEC discretization,
   where the problem is solved again in the non-conforming 
   space $W^0_h$, but now using the $C^1$ projection $P^0_U$ 
   to handle the polar singularity.
\end{itemize}

In each case the matrix systems are solved using the conjugate gradient method with a tolerance on the residual error of \texttt{1e-12}. 
For each method we measure the relative errors in $L^2$ and $H^1$ norms,
\begin{equation} \label{err_phi}
   \mathcal{E}_{h, L^2} = \frac{\norm{\Pi^0_W \phi - \phi_h}_{L^2(\Omega_h)}}%
   {\norm{\Pi^0_W \phi}_{L^2(\Omega_h)}},
\qquad 
   \mathcal{E}_{h, H^1} = \frac{\norm{\Pi^0_W \phi - \phi_h}_{H^1(\Omega_h)}}%
   {\norm{\Pi^0_W \phi}_{H^1(\Omega_h)}},
\end{equation}
for various degrees and grid resolutions. 
Here we remind that $\Pi^0_W$ is the spline interpolation 
operator associated with the space $W^0_h$,
so that $\Pi^0_W \phi$ may be considered a discrete 
reference solution on the computational domain $\Omega_h$.

In Figure~\ref{fig:PoissonErr} we plot the 
$L^2$ and $H^1$ errors for the different discretization methods 
against increasing mesh resolutions, for several degrees $p = 2, \dots 5$. 
For both norms we find that the three methods yield the same errors:
for the conforming and $C^1$ broken-FEEC methods this is expected 
since both solutions should coincide \cite{guclu_broken_2023}, and for the 
$C^0$ broken-FEEC method (a priori more accurate since the solution is computed in a 
larger space) we see that the additional 
smoothness constraint on the pole does not degrade the accuracy, which is not
surprising here given the high smoothness of the solution.
In Table~\ref{tab:PoissonRate} we finally show the convergence rates measured 
from the errors on the two finest grids: they are found to be close or higher 
than the standard optimal rates of $p + 1$ for the $L^2$ norm and $p$ for the $H^1$ norm.


\begin{figure}[!htbp]
\centering
\subfloat[relative $L^2$ errors]{
\includegraphics[width = .45\textwidth]{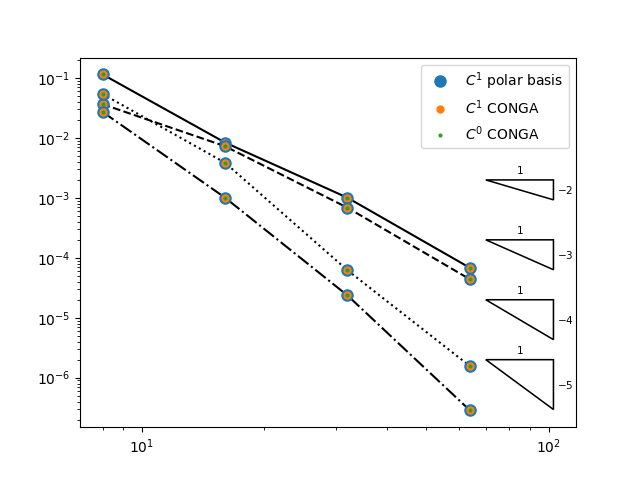}
}\quad
\subfloat[relative $H^1$ errors]{
\includegraphics[width = .45\textwidth]{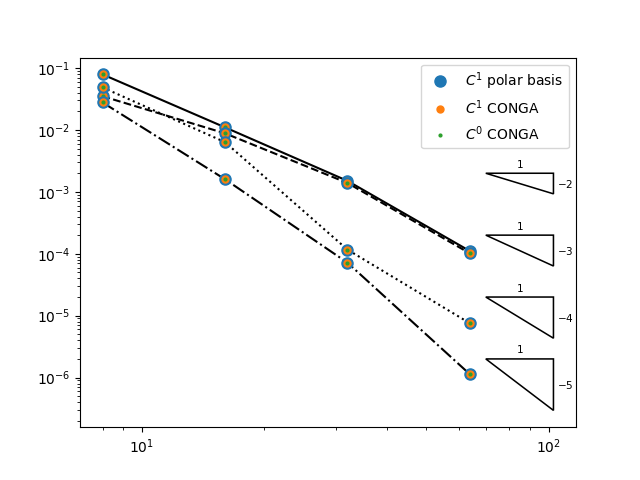}
}
\caption{Poisson problem: relative errors \eqref{err_phi} 
in $L^2$ norm (left) and $H^1$ norm (right) 
are plotted for the three different methods described in the text 
($C^1$ conforming, $C^0$ broken-FEEC and $C^1$ broken-FEEC)
as a function of $N_s$ the number of cells along $s$
(with $N_\theta = 2N_s$), represented in abscissa.
For each method different spline degrees are used, indicated
by different line styles, namely $p = 2, 3, 4, 5$ indicated by 
solid, dashed, dotted and dash-dotted lines respectively. 
}\label{fig:PoissonErr}
\end{figure}

\begin{table} [t] 
   \centering
   \begin{tabular}{|l|l|l|l|l|}
      \hline
   degree $p$
   & 2 
      & 3
         & 4 
            & 5
   \\ 
   \hline 
   $L^2$ rate
   & 3.89 
      & 3.97 
         & 5.32
            & 6.37
   \\ 
   \hline 
   $H^1$ rate
   & 3.77
      & 3.78
         & 3.95
            & 5.99
             \\
      \hline 
   \end{tabular}
   \caption{$L^2$ and $H^1$ convergence rates measured from the two finer grids in the error curves plotted in Figure~\ref{fig:PoissonErr}.}
   \label{tab:PoissonRate}
\end{table}

\subsection{Maxwell's equations}

We next apply our approach to the time dependent 
Maxwell equation \eqref{eq:tMaxwell}.
Associated with a leap-frog time scheme, the broken-FEEC discretization 
\eqref{eq:tMaxwell_h} with regularized mass matrices \eqref{tM} reads
\begin{equation}\label{eq:tMaxwell_hn}
   \left\{
   \begin{array}{rl}
      \arr{B}^{n + \frac 12} &= \arr{B}^n 
         - \frac{\Delta t}{2} \mat{C} \mat{P}^1\arr{E}^n
         \\
      \tmat{M}^1 \arr{E}^{n+1} &= \tmat{M}^1 \arr{E}^n 
         + \Delta t  \big( (\mat{C} \mat{P}^1)^T \tmat{M}^2 \arr{B}^{n+\frac12} - (\mat{P}^1)^T \arr{J}^{n+\frac12} \big) 
         \\
      \arr{B}^{n+1} &= \arr{B}^{n+\frac12} 
         - \frac{\Delta t}{2} \mat{C} \mat{P}^1 \arr{E}^{n+1}
   \end{array}
   \right.
\end{equation}
with a current source array defined as
$
\arr{J}^{n+\frac12}_\mu = \frac{1}{\Delta t} \int_{t^n}^{t^{n+1}}
\sprod{\bJ(t)}{\Lambda^1_\mu} \rmd t
$,
see \cite{campos_compatible_2017}.

To assess the qualitative properties of our polar broken-FEEC scheme, 
we consider a circular wave propagating from the center $\bx_C = (0,0)$
through the pole $\bx_0 = (D, 0)$. 
The initial condition is a Gaussian pulse centered at the disk origin
\begin{equation}
   \bE(t=0, \bx) = 
   \begin{pmatrix}
      y \\ -x 
   \end{pmatrix}
   \exp\Big( -\frac{\norm{\bx}^2_2}{2 \sigma^2}\Big),
   \quad 
   B(t=0, \bx) = \curl \bE^0(\bx)
   \qquad \text{for $\bx = (x,y) \in \RR^2$},
\end{equation}
with $\sigma = 0.1$.
In Figure~\ref{fig:wave} we plot the profiles of the (scalar)
transverse magnetic field $B_z$ at different times, for
increasing resolutions:
while spurious oscillations can be seen around the pole on the coarsest mesh,
they quickly disappear as the resolution is increased.

\begin{figure}[!htbp]
   \centering
   \subfloat{
   \includegraphics[width = .28\textwidth]{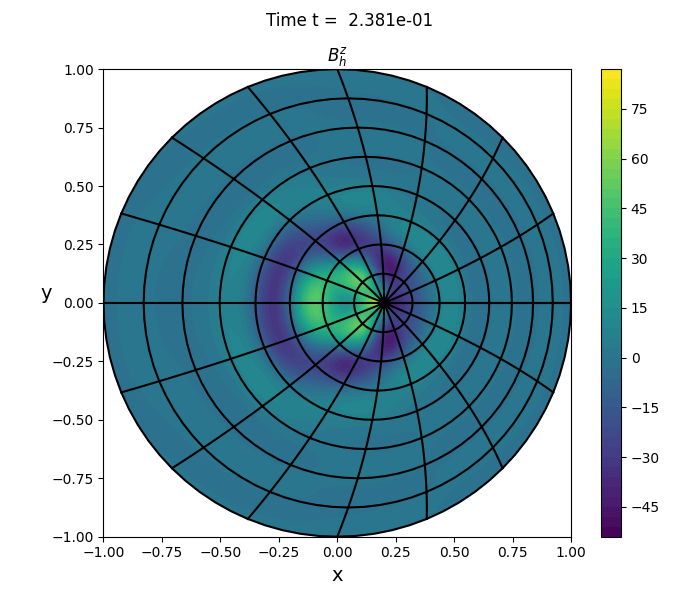}
   }\quad
   \subfloat{
   \includegraphics[width = .28\textwidth]{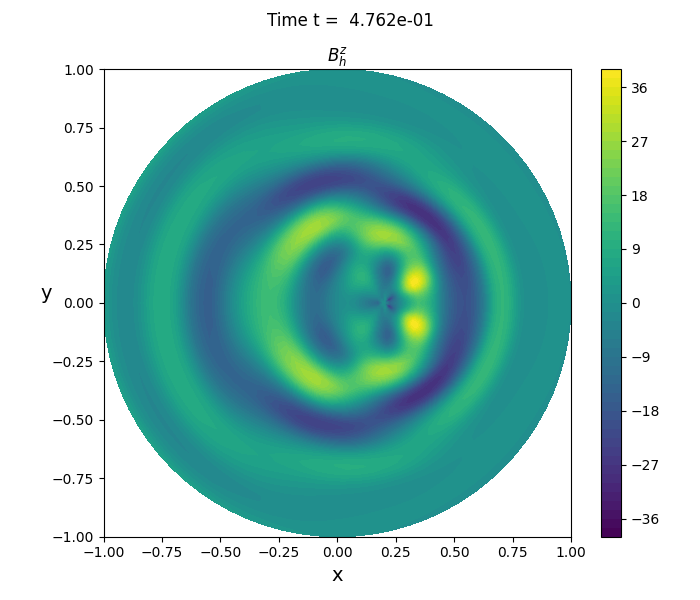}
   }\quad
   \subfloat{
   \includegraphics[width = .28\textwidth]{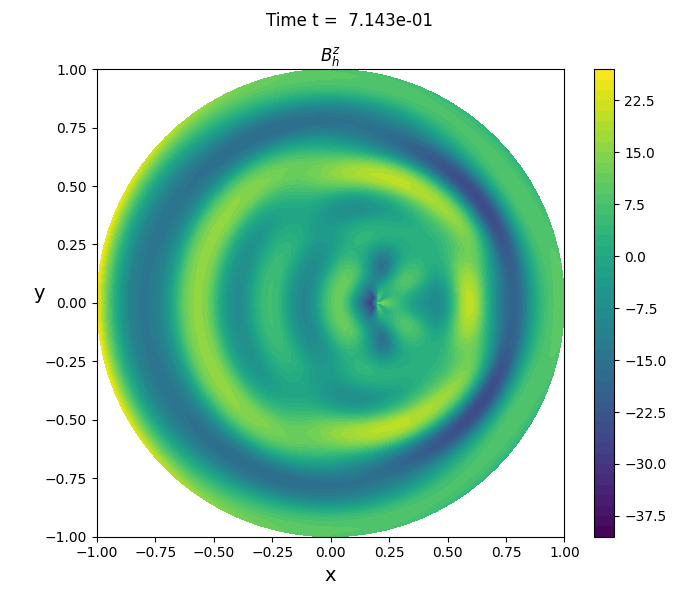}
   }
   \\[-7pt]
   \subfloat{
   \includegraphics[width = .28\textwidth]{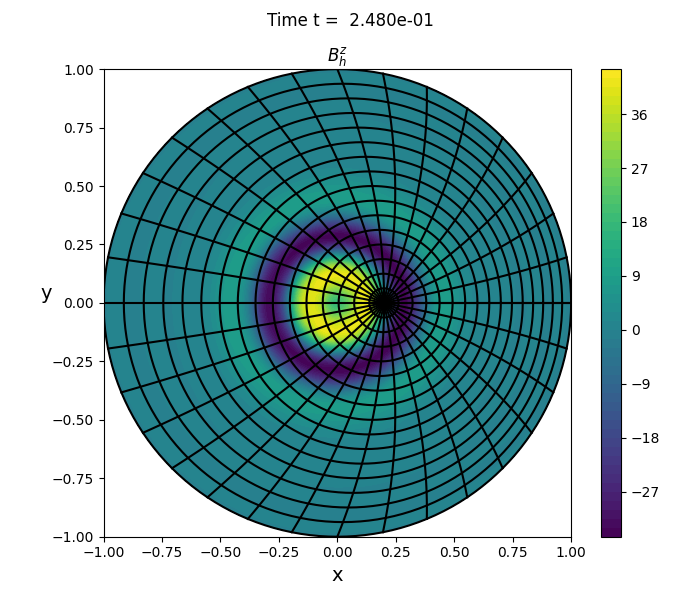}
   }\quad
   \subfloat{
   \includegraphics[width = .28\textwidth]{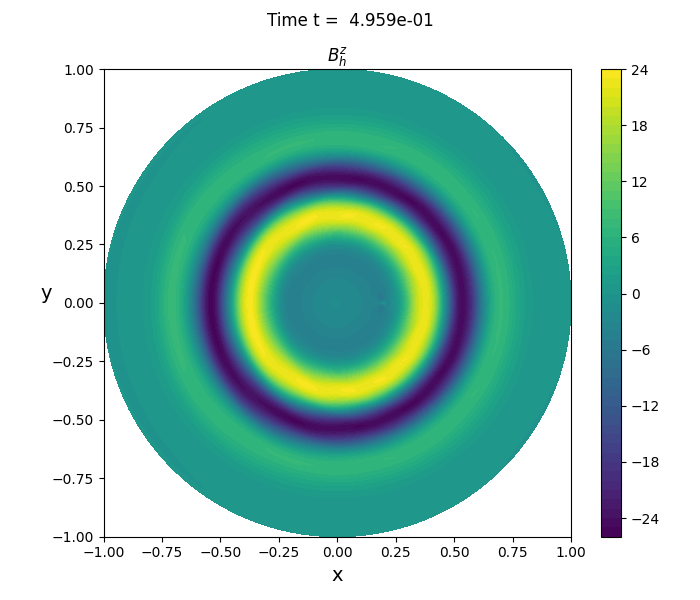}
   }\quad
   \subfloat{
   \includegraphics[width = .28\textwidth]{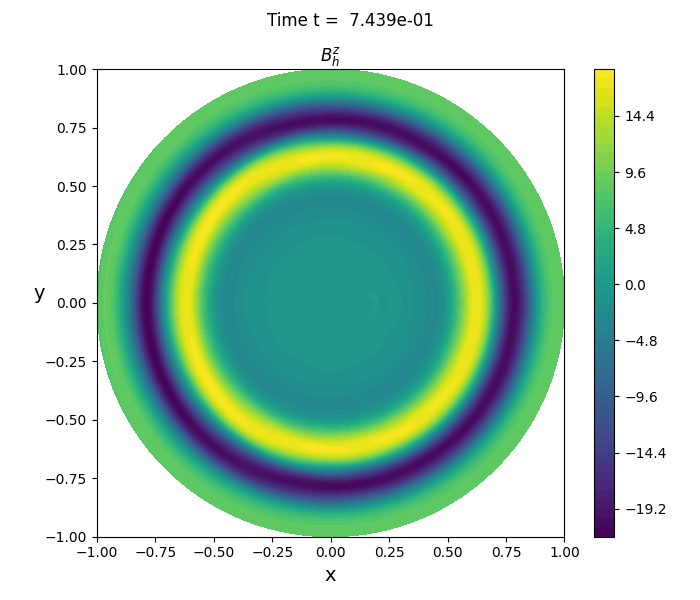}
   }
   \\[-7pt]
   \subfloat{
   \includegraphics[width = .28\textwidth]{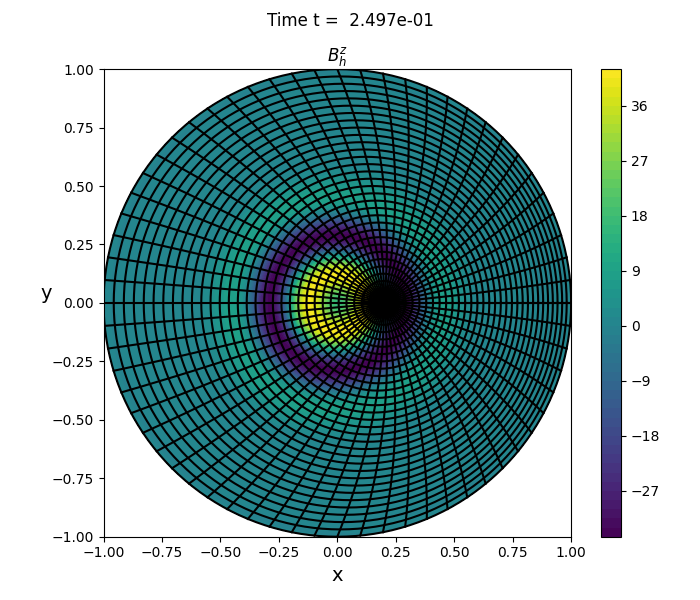}
   }\quad
   \subfloat{
   \includegraphics[width = .28\textwidth]{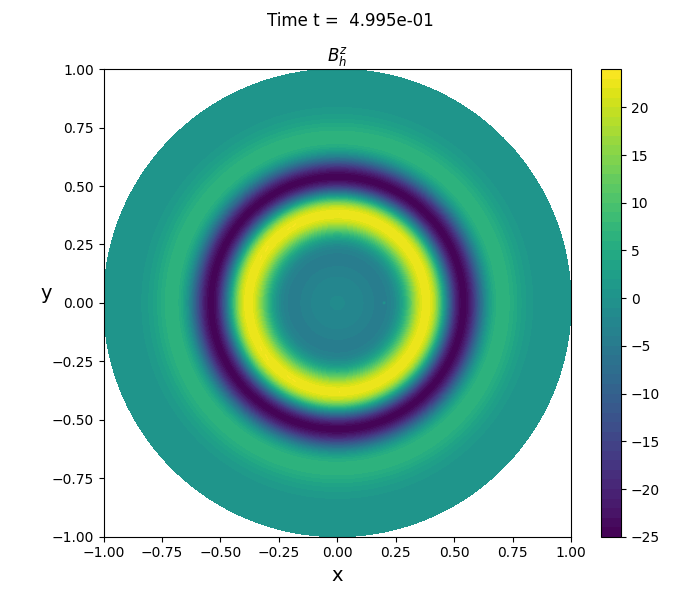}
   }\quad
   \subfloat{
   \includegraphics[width = .28\textwidth]{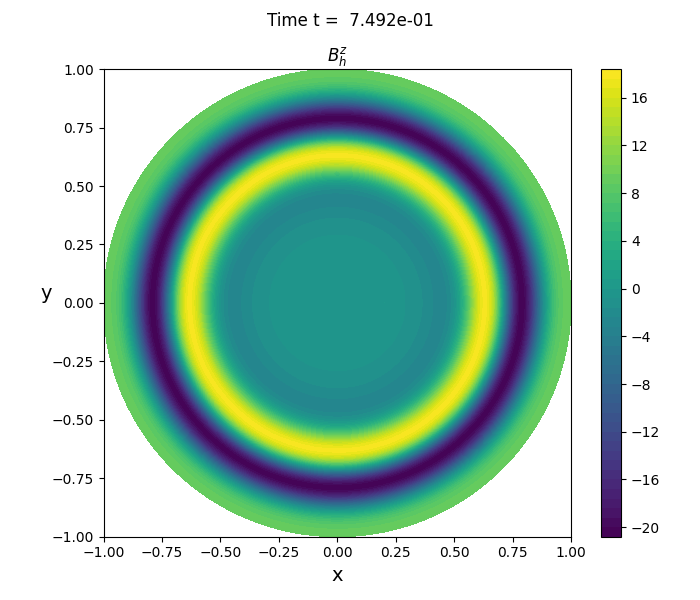}
   }   
   \caption{Maxwell equations: propagation of a circular wave at three successive times 
   ($t \approx 2.5, 5$ and $7.5$, from left to right) computed with the $C^1$ broken-FEEC
   scheme \eqref{eq:tMaxwell_hn}. Three grids (represented on the left plots) are used,
   with $N_s = 8$, $16$ and $32$ cells along $s$, and $N_\theta = 2N_s$ along the periodic variable 
   (from top to bottom).}\label{fig:wave}
\end{figure}

Finally we assess the quantitative convergence of 
our broken-FEEC Maxwell solver with a time-harmonic solution 
of Bessel-Fourier type, with mode $(n,m) \in \NN^2$.
Let us remind how this solution is obtained.
Using the (standard) polar parametrisation 
$F_{D=0}: (s,\theta) \mapsto (x,y) = (s\cos\theta, s\sin\theta)$
of the disk domain \eqref{disk},
the complex solution can be expressed as the pushforward 
\begin{equation} \label{B_E_pf}
   B = \cF^2 \hB, \qquad \bE = \cF^1 \hbE
\end{equation}
of logical fields of the form
\begin{equation} \label{B_E_log}
\hB(t,s,\theta) = s e^{i\omega t} J_n(ks) \cos(n\theta),
\qquad
\hbE(t,s,\theta) = \begin{pmatrix}
   E_s(t,s,\theta)
   \\
   E_\theta(t,s,\theta)
\end{pmatrix}
= i e^{i\omega t} \begin{pmatrix}
   \frac{n}{ks} J_n(ks) \sin(n\theta)
   \\
   s J'_n(ks) \cos(n\theta)
\end{pmatrix}
\end{equation}
where $J_n$ is the Bessel function of the first kind 
and $k = \omega = j'_{n,m}$ is the $m$-th root
of its derivative $J'_n$.

Here, one easily verifies that the logical fields satisfy 
$\partial_t \hB + \hcurl \hbE = 0$, so that Faraday's equation follows
from the commuting properties of the pushforward operators, namely
\eqref{com_pf_curl_Lip} (note that
$E_\theta(s=0) = 0$). To verify that Ampère's equation holds as well,
one can use the explicit forms of the pushforward operators
\eqref{pf} 
to rewrite \eqref{B_E_pf}--\eqref{B_E_log} as
\begin{equation*} \label{B_E_ex}
   \begin{cases}
      B(t,x,y) = \frac{1}{s} \hB(t,s,\theta) = e^{i\omega t} J_n(ks) \cos(n\theta)
   \\[5pt]   
   \bE(t,x,y) 
   = \begin{pmatrix}
      \cos \theta & -\frac 1s \sin \theta
      \\
      \sin \theta & \frac 1s \cos \theta
   \end{pmatrix} \hbE(t,s,\theta) 
   = i e^{i\omega t}  \begin{pmatrix}
      \frac{n}{ks}  J_n(ks) \sin(n\theta)  \cos \theta 
      - J'_n(ks) \cos(n\theta) \sin \theta
      \\
      \frac{n}{ks}  J_n(ks) \sin(n\theta)  \sin \theta 
      + J'_n(ks) \cos(n\theta) \cos \theta
   \end{pmatrix},
   \end{cases}
\end{equation*} 
and the fact that $J_n$ is the (bounded) solution of the differential equation
\begin{equation}
   \label{bessel}
   x^2 J''_n(x) + x J'_n(x) + (x^2-n^2)J_n(x) = 0.
\end{equation}
We remind that close to $s \to 0$ we have 
$J_n(ks) \sim \big(\frac{ks}{2}\big)^n / \Gamma(n+1)$,
see \cite{abramowitz_handbook_1964}, so that 
$\bE$ has no singularity at the pole. 
The homogeneous boundary condition $\bn \times \bE = 0$
is easily verified using $E_\theta(s=1) = 0$ which follows 
from the fact that $J'_n(k) = 0$.

In our numerical convergence study we consider the real part of the 
above Bessel-Fourier solution with mode number $(m, n) = (2, 3)$,
on a time range $[0,T]$ with $T = 0.1$.
The initial solutions are represented in Figure~\ref{fig:bessel}, 
and in Figure~\ref{fig:BesselErr} we plot the errors of our
broken-FEEC scheme \eqref{eq:tMaxwell_hn} at $t=T$, 
with $C^0$ and $C^1$ projection operators, i.e. using
$P^1 = P^1_V$ and $P^1_U$ respectively.
Note that in our experiments a Suzuki-Yoshida \cite{suzuki_fractal_1990,yoshida_construction_1990} time scheme of order 4
is obtained by composing the Strang steps as described in 
\cite[Sec.~5.2]{kraus_gempic_2017}.

Again, we observe that the $C^0$ and $C^1$ schemes 
yield virtually the same errors, except in a couple of 
cases for which we do not have a clear diagnostic.
As for the convergence rates (measured again using the finest grids,
and displayed in Table~\ref{tab:BesselRate}), they are close 
to $p$ which may be considered as optimal given that the logical 
approximation spaces $\hat W^1_h$ and $\hat W^2_h$
consist of B-splines of mixed degree $p$ and $p-1$, see \eqref{dR_W}.

\begin{table}[!htbp]
   \centering
   \begin{tabular}{|l|l|l|l|l|}
   \hline
   degree $p$
   & 2 
      & 3
         & 4 
            & 5
   \\ 
   \hline
   $\bE$ error rate
   & 2.02 
      & 3.05
         & 4.08
            & 5.14
   \\ 
   \hline
   $B$ error rate
   & 2.05 
      & 3.16 
         & 4.20 
            & 4.99
             \\ \hline
   \end{tabular}
   \caption{Convergence rates for the errors in $\bE$ and $B$
   as computed with the $C^1$ broken-FEEC scheme \eqref{eq:tMaxwell_hn}, 
   measured from the two finer grids in the error curves plotted in Figure~\ref{fig:BesselErr}.}
   \label{tab:BesselRate}
\end{table}

Overall these numerical experiments  
confirm the ability of our broken-FEEC approach to approximate
problems on polar domains.

\begin{figure}
   \centering
   \subfloat{
   \includegraphics[width = .28\textwidth]{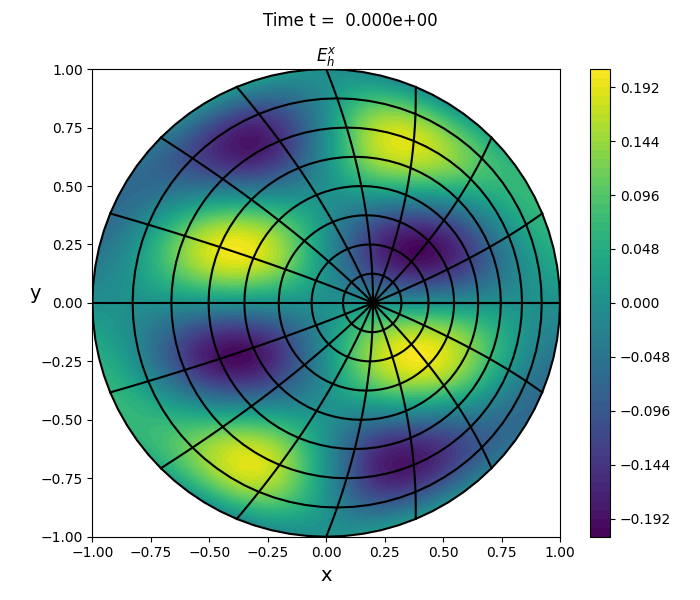}
   }\quad
   \subfloat{
      \includegraphics[width = .28\textwidth]{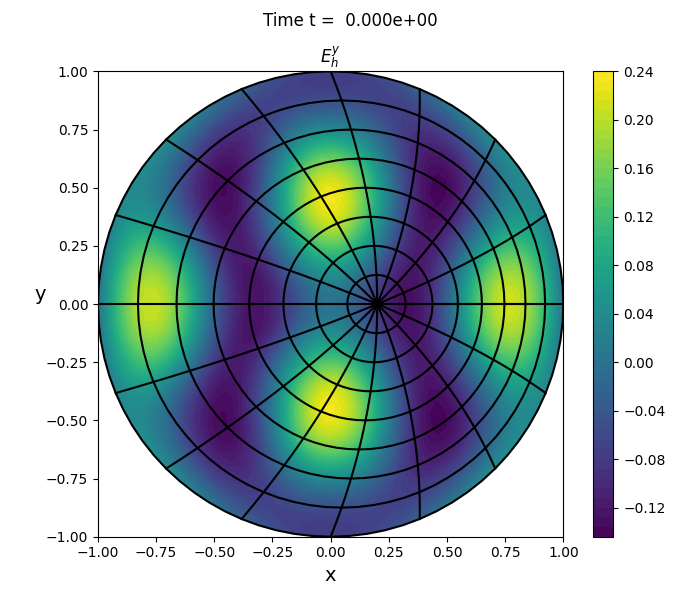}
      }\quad
   \subfloat{
      \includegraphics[width = .28\textwidth]{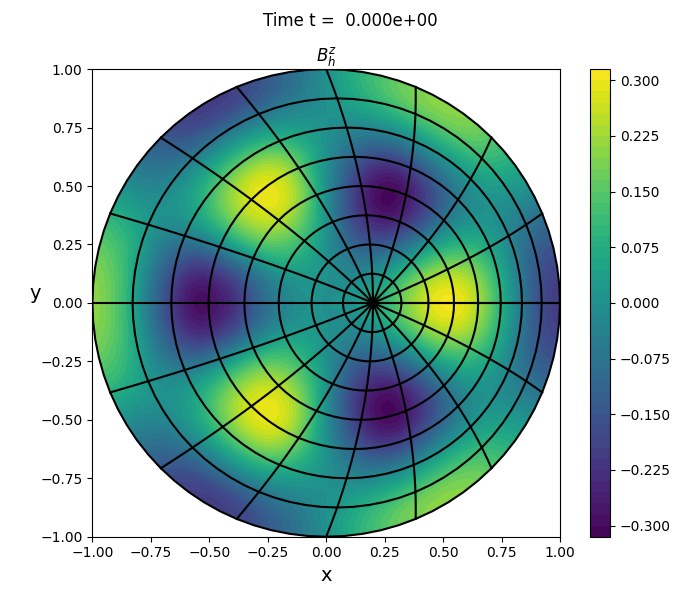}
   }
   \caption{Initial Bessel-Fourier solution 
   (from left to right: $E_x$, $E_y$, $B$) 
   with mode number $(m, n) = (2, 3)$
   used for the convergence study of Maxwell's equations.}
   \label{fig:bessel}
\end{figure}

\begin{figure}
   \centering
   \subfloat[]{
   \includegraphics[width = .45\textwidth]{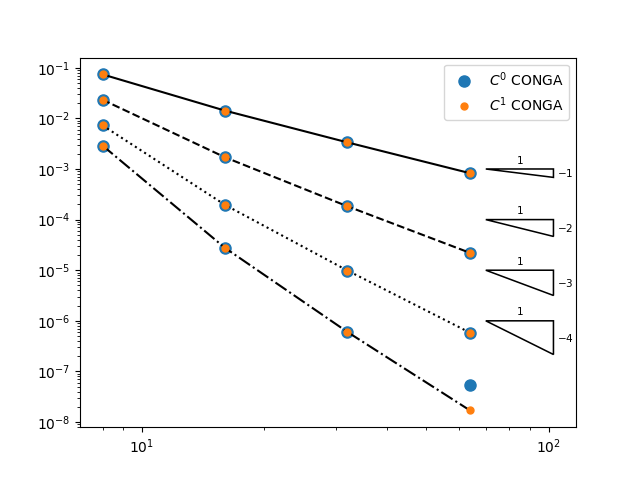}
   }\quad
   \subfloat[]{
   \includegraphics[width = .45\textwidth]{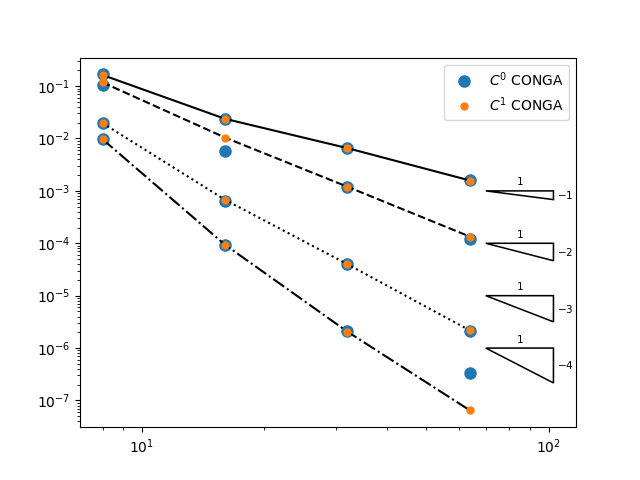}
   }
   \caption{Maxwell's equations: relative $L^2$ errors
   for the $\bE$ (left) and $B$ fields (right) 
   obtained using the polar broken-FEEC scheme \eqref{eq:tMaxwell_hn} 
   with $C^0$ and $C^1$ projection operators
   $P^1 = P^1_V$ and $P^1_U$.
   As in Figure~\ref{fig:PoissonErr}, errors are plotted 
   versus $N_s$ the number of cells along $s$
   (with $N_\theta = 2N_s$) 
   and different spline degrees ($p = 2, 3, 4, 5$) are used, 
   indicated by different line styles
   (solid, dashed, dotted and dash-dotted lines respectively). 
   }\label{fig:BesselErr}
\end{figure}

\section*{Acknowledgments}

The authors would like to thank Stefan Possanner and Roman Hatzky for insightful discussions.

\appendix
\section{Reminder on B- and M-splines}

The splines $B_i(s)$ and $\per{B}_j(\theta)$ involved in this article 
correspond to the normalized B-splines, namely
$B_i(s) = (s_{i+m}-s_i)Q^m_i(s)$ with $m = p+1$
in Schumaker's notation \cite{schumaker}.
We further denote
$M_i(s) \defeq (m-1) Q^{m-1}_{i+1}(s)$ for $0 \le i < n_s-1$, 
yielding the derivative formula
\begin{equation} \label{Bprime}
    B_i' = M_{i-1} - M_{i} \quad \text{ for } \quad 0 \le i < n_s
\end{equation}
where we have set $ M_{-1} \defeq M_{n_s-1} \defeq 0$ for convenience.
We remind that B-splines satisfy a partition of unity property, namely
\begin{equation} \label{pup}
    \sum_{i=0}^{n_s-1}B_i(s) = 1, \qquad
    \sum_{j=0}^{n_\theta-1}\per{B}_j(\theta) = 1
\end{equation}
hold for all $s$ and $\theta$, and that those defined 
from open knot sequences are interpolatory at the end points.
In particular, we have 
\begin{equation} \label{BM_0}
  B_i(0) = M_i(0) = 0 \text{ for } i > 0, 
  \quad
  B_0(0) = 1, \quad 
\end{equation}
and 
\begin{equation} \label{BM_0p}
  B'_i(0) = 0  \text{ for } i > 1, 
  \quad 
  B_1'(0) = M_0(0) = \frac{p}{s_{p+1}}.
\end{equation}
A standard relation follows from \eqref{Bprime} and \eqref{BM_0}--\eqref{BM_0p}, 
namely
\begin{equation}   
\label{int_M}
\int_0^L M_i(s) \rmd s = 1 \qquad \text{ for } \quad 0 \le i < n_s - 1.
\end{equation}
For the periodic splines along $\theta$ we define similarly 
$\per{M}_j(\theta) \defeq (m-1) \per{Q}^{m-1}_{j+1}(\theta)$, yielding the same derivative formula
than aboove,
\begin{equation} \label{perBprime}
    \per{B}_j' = \per{M}_{j-1} - \per{M}_{j} \quad \text{ for } \quad 0 \le j < n_s
\end{equation}
where we now observe that every spline $\per{M}_{j}$ 
is defined by periodicity.
With regular knots the periodic M-splines also sum to a constant value,
namely
\begin{equation} \label{pup-M}
    \sum_{j=0}^{n_\theta-1}\per{M}_j(\theta) = \frac{1}{\Delta \theta}
    \quad 
    \text{ with } ~~ \Delta \theta = \frac{2\pi}{n_\theta}.
\end{equation}
Note that with this definition the splines $\per{B}_j(\theta)$ and 
$\per{M}_j(\theta)$ are supported 
in the respective intervals $[\theta_j, \theta_{j+p+1}]$ 
and $[\theta_{j+1}, \theta_{j+p+1}]$, in particular we have
\begin{equation} \label{perBM}
\per{B}_j \per{M}_{j+k} = 0 \quad \text{ for } \quad k \notin \{-p, \dots p-1\}.
\end{equation}

We further list some elementary relations satisfied by the regular angles 
$\theta_j = \frac{2\pi j}{n_\theta}$, which hold as long as $n_\theta = 4n'$ 
for a positive integer $n' > 0$, see Assumption~\ref{ass:ntheta}:
\begin{equation}\label{eq:trigo}
\begin{array}{c}
\displaystyle\sum_{j=0}^{n_\theta-1} \cos(2\theta_j) 
   = \sum_{j=0}^{n_\theta-1} \cos(\theta_j) 
   = \sum_{j=0}^{n_\theta-1} \sin(2\theta_j) 
   = \sum_{j=0}^{n_\theta-1} \sin(\theta_j) = 0
    \\\\
\displaystyle\sum_{j=0}^{n_\theta-1} 2\cos^2\theta_j 
   = \sum_{j=0}^{n_\theta-1} (1+\cos(2\theta_j)) 
   = \sum_{j=0}^{n_\theta-1} 2\sin^2\theta_j 
   = \sum_j (1-\cos(2\theta_j)) 
   = n_\theta
    \\\\
\displaystyle\sum_{j=0}^{n_\theta-1} 2\cos\theta_j\cos(\theta_j - \theta_k) = n_\theta \cos\theta_k
   , \qquad
    \sum_{j=0}^{n_\theta-1} 2\sin\theta_j\cos(\theta_j - \theta_k) = n_\theta \sin\theta_k~.
\end{array}
\end{equation}

\details{Spline formulas: 
we denote 
$B_i = N^{m}_i = (s_{i+m}-s_i)Q^m_i$ the normalized B-spline \cite{schumaker} of order $m = p+1 \le 2$,
so that 
$$
\sum_{i=0}^{n_s-1} B_i = 1
$$
Derivative formula:
$$
(Q^m_i)' = \frac{m-1}{s_{i+m}-s_{i}} (Q^{(m-1)}_i - Q^{(m-1)}_{i+1})
\quad \iff \quad
B_i' = \frac{p}{s_{i+p}-s_{i}} B^{(p-1)}_{i} - \frac{p}{s_{i+1+p}-s_{i+1}} B^{(p-1)}_{i+1}.
$$
We further denote $M_i(s) = \frac{p}{s_{i+1+p}-s_{i+1}} B^{(p-1)}_{i+1}(s)$ for $0 \le i < n_s-1$
(in Schumaker's notation this yields $M_i = (m-1) Q^{m-1}_{i+1}$), 
and also set $M_{-1} = M_{n_s-1} = 0$, so that 
$$
B_i' = M_{i-1} - M_{i} \quad \text{ for } \quad 0 \le i < n_s.
$$
We take $p \ge 1$. The interpolation properties of B-splines with open knot sequence yield
$$
B_0(0) = 1, \quad M_0(0) = \frac{p}{s_{p+1}}
\quad \text{ and } \quad
B_i(0) = M_i(0) = 0 \text{ for } i > 0.
$$
}

\bibliography{biblio}
\end{document}